\tikzset{
    labl/.style={anchor=south, rotate=90, inner sep=.5mm}
}
\newtheorem{thm}{Theorem}
\newtheorem{theorem}[thm]{Theorem}
\newtheorem{thmintro}{Theorem}
\newtheorem{prop}[thm]{Proposition}
\newtheorem{lem}[thm]{Lemma}
\newtheorem{lemma}[thm]{Lemma}
\newtheorem{cor}[thm]{Corollary}
\theoremstyle{definition}
\newtheorem{definition}[thm]{Definition}
\newtheorem{defi}[thm]{Definition}
\newtheorem{rem}[thm]{Remark}
\newtheorem{remark}[thm]{Remark}
\numberwithin{equation}{section}
\numberwithin{thm}{section}
\newcommand{{\BG}}{\ensuremath{\mathbb {G}}\xspace}
\newcommand{{\BK}}{\ensuremath{\mathbb {K}}\xspace}
\newcommand{\BQ}{\ensuremath{\mathbb {Q}}\xspace}
\newcommand{\BZ}{\ensuremath{\mathbb {Z}}\xspace}
\newcommand{\CJ}{\ensuremath{\mathcal {J}}\xspace}
\newcommand{\CL}{\ensuremath{\mathcal {L}}\xspace}
\newcommand{\CM}{\ensuremath{\mathcal {M}}\xspace}
\newcommand{\CV}{\ensuremath{\mathcal {V}}\xspace}
\newcommand{\iwtp}{{\breve{X}^+}}
\newcommand{\iwt}{\breve{X}}
\newcommand{\Ir}{\mathbf{I}_\circ'}
\newcommand{\I}{\mathbf{I}}
\begin{document}

\title[]{Dual canonical bases and embeddings of symmetric spaces}

\author[Huanchen Bao]{Huanchen Bao}
\address{Department of Mathematics, National University of Singapore, Singapore.}
\email{huanchen@nus.edu.sg}

\author[Jinfeng Song]{Jinfeng Song}
\address{Department of Mathematics, National University of Singapore, Singapore.}
\email{j\_song@u.nus.edu}

 \subjclass[2020]{} 

\begin{abstract}
 For a connected reductive group $G_k$ over an algebraically closed field $k$ of char $\neq 2$ and a fixed point subgroup $K_k$ under an algebraic group involution, we construct a quantization and an integral model of any affine embeddings of the symmetric space $G_k/K_k$. We show that the coordinate ring of any affine embedding of $G_k/K_k$ admits a dual canonical basis. 
 
 We further construct an integral model for the canonical embedding (that is, an embedding which is complete, simple, and toroidal) of $G_k/K_k$. When $G_k$ is of adjoint type, we obtain an integral model for the wonderful compactification of the symmetric space.  
\end{abstract}

	\maketitle
	
	
\section{Introduction}
 
\subsection{} 
Let $G_k$ be a connected reductive group over an algebraically closed field $k$ of char $\neq 2$. Let $\theta_k$ be an involution of $G_k$ and denote the fixed point subgroup by $K_k$. The affine quotient $G_k/K_k$ is called a \emph{symmetric space}. In our previous paper \cite{BS}, we studied the coordinate ring $k[G_k/K_k]$ of the symmetric space. 
In the current paper, we study embeddings of symmetric spaces. An \emph{embedding} of $G_k/K_k$ is a normal $G_k$-variety $V_k$ with a $G_k$-equivariant open embedding $G_k/K_k\hookrightarrow V_k$. 

Let $\iwtp$ be the set of spherical dominant weights (see \S\ref{sec:iwt}). Let $\mathcal{L}$ be a finitely generated submonoid of $\iwtp$. Define  a subspace 
\begin{equation*}
    R_k(\mathcal{L})=\bigcup_{\mu\in \mathcal{L}}k[G_k/K_k]_{\leq\mu} \subset k[G_k/K_k].
\end{equation*}
 It is clear that $R_k(\mathcal{L})$ is moreover a $G_k$-subalgebra. Define the affine $G_k$-variety
\begin{equation}\label{eq:V}
V_k(\mathcal{L})=Spec\,R_k(\mathcal{L}).
\end{equation}
Then the embedding $R_k(\mathcal{L})\hookrightarrow k[G_k/K_k]$ of $G_k$-algebras defines a $G_k$-equivariant map $G_k/K_k \rightarrow V_k(\mathcal{L})$. 

The first main result of this paper is that \eqref{eq:V} gives an explicit construction of all  affine embeddings of symmetric spaces.

\begin{thmintro}\label{thm:emb} (Theorem~\ref{thm:semigroup})
    The map $\mathcal{L}\mapsto V_k(\mathcal{L})$ is a bijection between the set of closed saturated submonoids of $\iwtp$ with the set of affine embeddings of $G_k/K_k$ (up to isomorphisms).
\end{thmintro}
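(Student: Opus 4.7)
The plan is to construct mutually inverse maps between the two sets in the claimed bijection, leveraging the spherical-weight filtration $k[G_k/K_k]_{\leq\mu}$ and its description via the dual canonical basis established in \cite{BS}. The forward direction $\mathcal{L}\mapsto V_k(\mathcal{L})$ requires verifying that $V_k(\mathcal{L})$ is indeed an affine embedding, and the reverse direction requires extracting a closed saturated submonoid from any abstract affine embedding.

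For the forward direction, I first check that $R_k(\mathcal{L})$ is a finitely generated $G_k$-stable subalgebra: $G_k$-stability is immediate since each filtration piece $k[G_k/K_k]_{\leq\mu}$ is a $G_k$-submodule; finite generation follows from the finite generation of $\mathcal{L}$ together with the fact that multiplication on $k[G_k/K_k]$ sends $k[G_k/K_k]_{\leq\mu}\otimes k[G_k/K_k]_{\leq\nu}$ into $k[G_k/K_k]_{\leq\mu+\nu}$ (a direct consequence of the structure of the filtration in \cite{BS}). Thus $V_k(\mathcal{L})=\operatorname{Spec} R_k(\mathcal{L})$ is an affine $G_k$-variety. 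The inclusion $R_k(\mathcal{L})\hookrightarrow k[G_k/K_k]$ gives a $G_k$-equivariant map $G_k/K_k\to V_k(\mathcal{L})$; identifying $G_k/K_k$ with an open subvariety reduces to showing that $\mathrm{Frac}(R_k(\mathcal{L}))=\mathrm{Frac}(k[G_k/K_k])$, which holds because $\mathcal{L}$ generates $\iwtp$ up to group completion (and any missing weight is recoverable as a ratio of elements in $R_k(\mathcal{L})$). Normality of $V_k(\mathcal{L})$ is precisely where the saturation of $\mathcal{L}$ is used: saturation, via the weight-monoid dictionary analogous to Gordan's lemma, ensures that $R_k(\mathcal{L})$ is integrally closed in its fraction field.

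For the reverse direction, given an affine embedding $V_k$ of $G_k/K_k$, I regard $k[V_k]$ as a $G_k$-stable subalgebra of $k[G_k/K_k]$ via restriction, and define
\[
\mathcal{L}(V_k)\;=\;\{\mu\in\iwtp\,:\, k[G_k/K_k]_{\leq\mu}\subset k[V_k]\}.
\]
The crucial observation, using the multiplicity-one structure of the isotypic decomposition of $k[G_k/K_k]$ from \cite{BS}, is that every $G_k$-stable subspace of $k[G_k/K_k]$ is a union of filtration pieces. Hence $k[V_k]=R_k(\mathcal{L}(V_k))$. Compatibility of the filtration with multiplication forces $\mathcal{L}(V_k)$ to be a submonoid; the defining condition makes it closed (a lower set in the relevant partial order); finite generation of $\mathcal{L}(V_k)$ follows from finite generation of $k[V_k]$; and saturation of $\mathcal{L}(V_k)$ corresponds to normality of $V_k$ under the same dictionary used above. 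The compositions $\mathcal{L}\mapsto V_k(\mathcal{L})\mapsto \mathcal{L}(V_k(\mathcal{L}))$ and $V_k\mapsto \mathcal{L}(V_k)\mapsto V_k(\mathcal{L}(V_k))$ then reduce to the identities $\mathcal{L}(R_k(\mathcal{L}))=\mathcal{L}$ and $R_k(\mathcal{L}(V_k))=k[V_k]$, both of which are immediate from the preceding analysis.

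The main obstacle will be the precise translation between normality of $V_k(\mathcal{L})$ and saturation of $\mathcal{L}$. In the toric case this is standard, but in the symmetric-space setting one must exploit the multiplicative interaction of the filtration $k[G_k/K_k]_{\leq\mu}\cdot k[G_k/K_k]_{\leq\nu}\subset k[G_k/K_k]_{\leq\mu+\nu}$ together with nonvanishing of highest-weight components (from the dual canonical basis of \cite{BS}) to identify the integral closure of $R_k(\mathcal{L})$ in $k[G_k/K_k]$ with $R_k(\mathcal{L}^{\mathrm{sat}})$. A secondary point is confirming that the fraction field of $R_k(\mathcal{L})$ recovers $\mathrm{Frac}(k[G_k/K_k])$ for every admissible $\mathcal{L}$, which is needed for the open embedding of $G_k/K_k$.
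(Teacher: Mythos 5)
Your reverse direction rests on a claim that fails in positive characteristic: ``every $G_k$-stable subspace of $k[G_k/K_k]$ is a union of filtration pieces.'' Over fields of characteristic $p>0$, $G_k$-modules are not semisimple, and a $G_k$-stable subalgebra of $k[G_k/K_k]$ need not be compatible with the good filtration. For instance, the $G_k$-submodule generated by the eigenfunction $\chi_\mu$ is a Weyl module $\Delta(\mu)$, which is generally a proper submodule of $k[G_k/K_k]_{\le\mu}$ (whose top quotient is the \emph{dual} Weyl module $\nabla(\mu)$). The paper's route to the identity $k[V_k]=R_k(\mathcal{L}(V_k))$ does not go through such a structural claim; it proceeds by identifying both rings with the integral closure $\overline{A}$ of the smallest $G_k$-stable subalgebra $A$ containing the relevant $\chi_\mu$, crucially invoking the \emph{normality} of $V_k$ and a result of Grosshans asserting $k[V_k]$ is integral over $A$. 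So the forward use of saturation matches the normality of $V_k(\mathcal{L})$, but the reverse direction must also invoke normality of the given $V_k$, not a semisimplicity-type statement about the filtration.

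Two further steps are left unsubstantiated. First, you assert closedness of $\mathcal{L}(V_k)$ ``by the defining condition,'' but that only works for your nonstandard definition, which in turn depends on the false claim above; with the paper's definition via $B_k$-eigenfunctions, closedness is a genuine theorem requiring Lemma~\ref{le:min} (writing $2(\mu-\lambda)$ as a nonnegative combination of $\overline{\alpha}_i$), the description of the valuation cone $\CV$ in Proposition~\ref{prop:va}, and the criterion that a rational function is regular on the normal variety $V_k$ iff it has nonnegative value under the boundary divisor valuations. Second, the passage from saturation of $\mathcal{L}$ to normality of $R_k(\mathcal{L})$ is not merely ``a weight-monoid dictionary analogous to Gordan's lemma''; it hinges on the good filtration of $k[G_k/K_k]$ established in \cite{BS}, which via Grosshans' theorem reduces normality of $R_k(\mathcal{L})$ to normality of $R_k(\mathcal{L})^{U_k}\cong k[\mathcal{L}]$, and that last isomorphism uses closedness of $\mathcal{L}$, not just saturation. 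Without the good filtration mechanism this reduction is unavailable in positive characteristic, and this is flagged in the paper as the key ingredient of the theorem.
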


See Definition \ref{def:sat} for the definition of saturated submonoids.  
A key ingredient of the theorem is to establish the normality of $V_k(\mathcal{L})$, which relies essentially on the fact that $k[G_k/K_k]$ admits a good filtration, as established in \cite{BS}.

\subsection{}The classification of symmetric spaces is independent of the base field by Springer \cite{Sp87}, provided the characteristic $\neq 2$. Thanks to Theorem \ref{thm:emb}, the classification of their affine embeddings is again independent of the base field.
Our second main result is a construction of the integral models and quantization for the affine embeddings of symmetric spaces.

\begin{thmintro}\label{thm:ing}(Definition~\ref{def:caq} \& Theorem~\ref{thm:VL} \& Definition~\ref{defi:dualCBL}) Let $\CL$ be a saturated submonoid of $\iwtp$. We define a commutative ring $\mathbf{R}(\mathcal{L})$ and an affine scheme $\mathbf{V}(\mathcal{L})=Spec\; \mathbf{R}(\mathcal{L})$ such that
\begin{itemize}
    \item the affine scheme $\mathbf{V}(\mathcal{L})$ is an integral model of affine embeddings, that is, the geometric fibre $Spec\; k\times_{Spec\; \mathbb{Z}}\mathbf{V}(\mathcal{L})$ over any algebraically closed field $k$ of characteristic $\neq 2$ is isomorphic to the affine embedding $V_k(\mathcal{L})$;
    \item the commutative ring $\mathbf{R}(\mathcal{L})$ admits a natural $\BZ$-basis $\mathrm{B}(\mathcal{L)}$ which specializes to a basis $\mathrm{B}(\mathcal{L)}$ of the coordinate ring $k[V_k(\CL)]$; 
    \item there is a non-commutative $\mathbb{Z}[q,q^{-1}]$-algebra $\mathbf{R}_q(\mathcal{L})$ with a natural $\mathbb{Z}[q,q^{-1}]$-basis $\mathrm{B}_q(\mathcal{L})$, such that the base change $\mathbb{Z}\otimes_{q\mapsto 1}\mathbf{R}_q(\mathcal{L})$ is canonically isomorphic to $\mathbf{R}(\mathcal{L})$ and $\mathrm{B}_q(\mathcal{L})$ is mapped to $\mathrm{B}(\mathcal{L})$ under the isomorphism.
\end{itemize}
 We call $\mathrm{B}(\mathcal{L)}$ the dual canonical basis of the embedding $V_k(\CL)$. 
\end{thmintro}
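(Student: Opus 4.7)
The plan is to construct the quantum integral form $\mathbf{R}_q(\mathcal{L})$ first, and to obtain the classical ring $\mathbf{R}(\mathcal{L})$ and its basis by specializing $q\mapsto 1$. The essential input is the integral quantum coordinate ring of $G/K$ together with its dual canonical basis from our earlier paper \cite{BS}, and its compatibility with the weight filtration indexed by $\iwtp$.

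More precisely, starting from the $\mathbb{Z}[q,q^{-1}]$-form $\mathbf{R}_q$ of the quantized coordinate ring of $G/K$ equipped with its dual canonical basis $\mathrm{B}_q$, each basis element $b$ has a well-defined leading weight in $\iwtp$. This yields a filtration that, after specialization $q\mapsto 1$ and base change to any algebraically closed field $k$ of characteristic $\neq 2$, recovers the filtration $\{k[G_k/K_k]_{\leq\mu}\}_{\mu\in\iwtp}$. For a saturated submonoid $\mathcal{L}\subset\iwtp$, I would define $\mathrm{B}_q(\mathcal{L})$ to consist of those basis elements whose leading weight lies in $\mathcal{L}$, and set
\begin{equation*}
\mathbf{R}_q(\mathcal{L}) := \bigoplus_{b\in \mathrm{B}_q(\mathcal{L})} \mathbb{Z}[q,q^{-1}]\cdot b,
\end{equation*}
a $\mathbb{Z}[q,q^{-1}]$-submodule of $\mathbf{R}_q$. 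To verify that $\mathbf{R}_q(\mathcal{L})$ is in fact a subalgebra, one combines the fact that $\mathcal{L}$ is closed under addition with the property that the product $b_1\cdot b_2$ of dual canonical basis elements of leading weights $\mu_1,\mu_2$ expands as a $\mathbb{Z}[q,q^{-1}]$-linear combination of basis elements of leading weight $\leq \mu_1+\mu_2$. Then set $\mathbf{R}(\mathcal{L}) := \mathbf{R}_q(\mathcal{L})\otimes_{\mathbb{Z}[q,q^{-1}],\,q\mapsto 1}\mathbb{Z}$, $\mathbf{V}(\mathcal{L}) := \operatorname{Spec}\mathbf{R}(\mathcal{L})$, and let $\mathrm{B}(\mathcal{L})$ be the image of $\mathrm{B}_q(\mathcal{L})$; the freeness of $\mathrm{B}_q(\mathcal{L})$ over $\mathbb{Z}[q,q^{-1}]$ ensures flatness, so the specialization is well-behaved and $\mathrm{B}(\mathcal{L})$ is a $\mathbb{Z}$-basis.

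The identification of the geometric fibre $\operatorname{Spec} k\times_{\operatorname{Spec}\mathbb{Z}}\mathbf{V}(\mathcal{L})$ with $V_k(\mathcal{L})$ of Theorem~\ref{thm:emb} then reduces to checking that the image of $\mathrm{B}(\mathcal{L})$ in $k[G_k/K_k]$ spans precisely $\bigcup_{\mu\in\mathcal{L}}k[G_k/K_k]_{\leq\mu}=R_k(\mathcal{L})$. This is immediate from the leading-weight indexing: each specialized basis element sits in the corresponding graded piece of the filtration, and $\mathcal{L}$ picks out exactly the relevant pieces.

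The hard part will be the subalgebra property in the second paragraph, namely upgrading the tautological filtration compatibility $k[G_k/K_k]_{\leq\mu}\cdot k[G_k/K_k]_{\leq\nu}\subset k[G_k/K_k]_{\leq\mu+\nu}$ at the classical level to a statement about structure constants of the integral quantum dual canonical basis. The saturation hypothesis on $\mathcal{L}$ (Definition~\ref{def:sat}) should be precisely what ensures that no basis element of leading weight in $\mathcal{L}$ is missed when one multiplies and expands in $\mathrm{B}_q$, so that the submodule one writes down is genuinely closed under multiplication rather than a strict subset of the algebra generated by $\mathrm{B}_q(\mathcal{L})$. Once this structural compatibility of $\mathrm{B}_q$ with the $\iwtp$-filtration—a feature expected of dual canonical bases of quantum symmetric pairs and developed in \cite{BS}—is formulated precisely and invoked, the remaining verifications (flatness, basis specialization, fibre identification) follow formally.
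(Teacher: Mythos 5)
Your approach is essentially the one the paper takes. Definition~\ref{def:caq} sets $\mathbf{R}_q(\mathcal{L})=\bigcup_{\mu\in\mathcal{L}}\mathbf{O}_q(G/K)_{\le\mu}$, which by the nested filtration on $B_q(G/K)$ from Theorem~\ref{thm:int} is exactly the free $\mathbb{Z}[q,q^{-1}]$-module on the basis elements whose leading weight lies in $\mathcal{L}$, \emph{provided} $\mathcal{L}$ is closed; Theorem~\ref{thm:VL}(1) then follows from Theorem~\ref{thm:semigroup} together with the base-change isomorphism \eqref{eq:bas}. So the construction, the basis and its specialization, and the fiber identification all agree with your outline.

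The one point you get wrong is attributing the subalgebra property to \emph{saturation}. In Definition~\ref{def:sat}, saturation is a divisibility condition ($n\mu\in\mathcal{L}\Rightarrow\mu\in\mathcal{L}$) together with finite generation and generating $\breve{X}$; none of that prevents a product $b_1b_2$ with leading weights $\mu_1,\mu_2\in\mathcal{L}$ from expanding into basis elements of some leading weight $\nu\le\mu_1+\mu_2$ with $\nu\notin\mathcal{L}$. What does the job is the separate \emph{closed} condition of Definition~\ref{def:sat} (if $\lambda\in\iwtp$ and $\lambda\le\mu$ for some $\mu\in\mathcal{L}$, then $\lambda\in\mathcal{L}$), combined with the quantum filtration compatibility $\mathbf{O}_q(G/K)_{\le\mu'}\cdot\mathbf{O}_q(G/K)_{\le\mu''}\subset\mathbf{O}_q(G/K)_{\le\mu'+\mu''}$ supplied by \cite{BS} (the quantum counterpart of Theorem~\ref{thm:int}(4)). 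Once $\mathcal{L}$ is a closed submonoid, the union $\bigcup_{\mu\in\mathcal{L}}\mathbf{O}_q(G/K)_{\le\mu}$ is automatically multiplicatively closed and coincides with the span of your leading-weight subset; for a merely saturated, non-closed $\mathcal{L}$, your leading-weight span need not be a subalgebra, and the fiber identification with $V_k(\mathcal{L})$ via Theorem~\ref{thm:semigroup} would also fail (that bijection is with \emph{closed} saturated submonoids). Note the theorem statement reads ``saturated submonoid'' but Definition~\ref{def:caq} explicitly requires ``closed saturated,'' consistent with Theorem~\ref{thm:emb}. Saturation's role is instead to ensure finite generation and normality of the fibers, not multiplicative closure.
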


We further establish various results for closures of $G_k$-orbits on such embeddings in Theorem~\ref{thm:CJ}, as well as abelianizations of affine embeddings on \S\ref{sec:ab}.

\subsection{}

For the rest of this introduction, let us assume that $G_k$ is semisimple. 

When $G_k$ is of adjoint type, De Concini--Procesi (in characteristric 0) and De Concini--Springer (in arbitary characteristic $\neq 2$) constructed a smooth complete embedding of $G_k/K_k$, such that it has the unique closed $G_k$-orbit (simple), and the closure of any $B_k$-stable divisor of $P_k$ which is not $G_k$-stable does not contain the closed $G_k$-orbit (toroidal). This embedding is called the \emph{wonderful compactification of $G_k/K_k$}.

For general semisimple group $G_k$, one cannot expect a smooth complete simple toroidal embedding. However, one can still construct a (not necessarily smooth) complete simple toroidal embedding, which is unique up to isomorphism ([cf, \cite{Gan}]). We call such an embedding \emph{the canonical embedding of $G_k/K_k$} following Gandini \cite{Gan}*{Definition 11.4}.

Our third main result is a construction of the integral model for the canonical embeddings.

\begin{thmintro}(Theorem~\ref{thm:sp} \& Definition~\ref{defi:P})
    There exists a projective scheme $\mathbf{P}$ over $\mathbb{Z}$, such that its geometric fibre $Spec
    \, k\times_{Spec\,\mathbb{Z}}\mathbf{P}$ over any algebraically closed field $k$ of characteristic $\neq 2$ is isomorphic to the canonical embedding of $G_k/K_k$.
\end{thmintro}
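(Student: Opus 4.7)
The plan is to realize $\mathbf{P}$ as $\operatorname{Proj}$ of a graded integral subalgebra of $\mathbf{R}(\iwtp)$, built from the dual canonical basis supplied by Theorem~\ref{thm:ing}.

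First, I would fix a regular spherical weight $\lambda_0 \in \iwtp$ lying in the relative interior of the rational cone $\iwtp \otimes \BR$; for instance, a sufficiently positive integer combination of the fundamental spherical weights. Classically, by the work of De Concini--Procesi in characteristic zero, De Concini--Springer in arbitrary characteristic $\neq 2$, and of Gandini \cite{Gan} in the general semisimple case, for any such $\lambda_0$ and any algebraically closed field $k$ of characteristic $\neq 2$, the canonical embedding of $G_k/K_k$ is canonically isomorphic to
\begin{equation*}
\operatorname{Proj}\!\Bigl(\bigoplus_{n \geq 0} k[G_k/K_k]_{\leq n\lambda_0}\Bigr),
\end{equation*}
graded by $n$, and this identification is uniform in $k$.

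Second, taking $\CL = \iwtp$ (which is a finitely generated saturated submonoid by Gordan's lemma) and applying Theorem~\ref{thm:ing}, I would define
\begin{equation*}
\mathbf{A} \;=\; \bigoplus_{n \geq 0} \mathbf{A}_n \;\subset\; \mathbf{R}(\iwtp),
\end{equation*}
where $\mathbf{A}_n$ is the $\BZ$-span of those elements of $\mathrm{B}(\iwtp)$ whose underlying highest weight is $\leq n\lambda_0$. Compatibility of the product with the highest-weight filtration (inherited from the good filtration property established in \cite{BS}) makes $\mathbf{A}$ into a graded subalgebra of $\mathbf{R}(\iwtp)$, and the basis property of Theorem~\ref{thm:ing} implies each $\mathbf{A}_n$ is finite free over $\BZ$ with $\mathbf{A} \otimes_\BZ k \cong \bigoplus_n k[G_k/K_k]_{\leq n\lambda_0}$ as graded rings. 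A standard Veronese argument combined with projectivity of the canonical embedding over $\BQ$ and flat base change then yields finite generation of $\mathbf{A}$ as a graded $\BZ$-algebra. Setting $\mathbf{P} := \operatorname{Proj}(\mathbf{A})$ gives a projective $\BZ$-scheme whose geometric fibres, again by flat base change, are identified with the canonical embeddings of $G_k/K_k$ as in the first step.

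The main obstacle I expect is packaging the characteristic-zero theory of De Concini--Procesi and the positive-characteristic work of De Concini--Springer / Gandini into one uniform integral statement. The crucial point is that the rank of $\mathbf{A}_n$ over $\BZ$ must agree with the dimension of $k[G_k/K_k]_{\leq n\lambda_0}$ in every geometric fibre; this characteristic-free matching is exactly what the dual canonical basis of Theorem~\ref{thm:ing} delivers, so integrality of the construction is essentially forced. A secondary technical point is to verify that the notion of ``regular'' $\lambda_0$ used in the classical constructions (e.g., the stabilizer of $[h_{\lambda_0}]$ equalling $N_{G_k}(K_k)$) coincides with $\lambda_0$ lying in the interior of $\iwtp$, so that the same $\lambda_0$ works uniformly across all fibres.
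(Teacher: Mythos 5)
Your definition of $\mathbf{A}_n$ uses the order $\le$ (span of basis elements of highest weight $\le n\lambda_0$), but the paper works with the coarser order $\preceq$, where $\mu\preceq\nu$ iff $m\mu\le m\nu$ for some $m>0$, and sets $\mathbf{P}_\lambda(G/K)=\operatorname{Proj}\bigoplus_{n\ge 0}\mathbf{O}(G/K)_{\preceq n\overline\lambda}e^{n\overline\lambda}$. For adjoint $G_k$ the two orders agree on $\iwt$ (the paper notes this explicitly), but for semisimple non-adjoint $G_k$ they differ: with $\le$ the $n$-th graded piece only picks up weights $\mu$ congruent to $n\lambda_0$ modulo the root lattice, the monoid of graded weights is not saturated, and $\operatorname{Proj}$ of the resulting ring has open dense orbit a proper \emph{quotient} of $G_k/K_k$. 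A concrete failure: take $G_k/K_k=(SL_2\times SL_2)/\Delta(SL_2)\cong SL_2$ and $\lambda_0$ the generator of $\iwtp\cong\BZ_{\ge0}$. The $\le$-graded ring is the polynomial ring $k[M_2]$ with its usual grading, so $\operatorname{Proj}=\BP(M_2)\cong\BP^3$, which is the wonderful compactification of $PGL_2$ --- not an embedding of $SL_2$ at all. The $\preceq$-graded ring has an extra degree-one generator and $\operatorname{Proj}$ is the smooth quadric $\{\det A=t^2\}\subset\BP^4$, which is the canonical embedding of $SL_2$. So with $\le$ the construction fails to produce an embedding of $G_k/K_k$ as soon as $G_k$ is not adjoint.

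There is also a more basic issue: the first step of your plan cites De Concini--Procesi, De Concini--Springer and Gandini for the claim that the canonical embedding is $\operatorname{Proj}$ of this kind of graded ring, uniformly in $k$. For adjoint $G_k$ this can be extracted from their construction of the wonderful compactification (which is done as an orbit closure in a product of projective spaces, not directly by $\operatorname{Proj}$, so there is still a translation to perform); for non-adjoint semisimple $G_k$ the canonical embedding is not the wonderful compactification and no such presentation is in the literature. That uniform $\operatorname{Proj}$ presentation is precisely what the paper's Theorem~\ref{thm:sp} proves, and it is proved there by constructing the enveloping variety $\widetilde V_k$ (an affine embedding of $\widetilde G_k/\widetilde K_k$, $\widetilde G_k=G_k\times T_k$), classifying its $\widetilde G_k$-stable prime ideals in Proposition~\ref{prop:Lptilde}, and taking the GIT quotient by $\overline T_k$; the integral model of Definition~\ref{defi:P} is then obtained by running the same formula with $\mathbf{O}(G/K)_{\preceq\,\cdot}$. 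Your proposal essentially takes that output as an input. Once both corrections are made --- replace $\le$ by $\preceq$ and prove (rather than cite) the $\operatorname{Proj}$ description --- the final step of lifting to a $\BZ$-form via the dual canonical basis of Theorem~\ref{thm:int} is correct and coincides with the paper's.
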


 We study the local structure of the canonical embedding (Proposition \ref{thm:smoo}) and further provide a criterion for the smoothness of canonical embeddings. This includes the case when $G_k$ is adjoint, but also includes other cases. Let us mention that the local structure theorem was previously known for toroidal embeddings in characteristic zero by Brion--Luna--Vust \cite{BLV}, and was known for wonderful compactification in arbitrary characteristic by De Concini--Procesi--Springer \cites{DCP,DS}. See also the work of Tange \cite{Tange} for a general discussion of the local structure theorem for spherical embeddings.

    \vspace{.2cm}
\noindent {\bf Acknowledgment: } Both authors are supported by MOE grants A-0004586-00-00 and A-0004586-01-00. We thank Tsao-Hsien Chen for helpful discussion. 
 \section{Preliminaries}

In this section, we recall some constructions of symmetric spaces. Results in Section \ref{sec:scs} and Proposition \ref{prop:va} are new.

We retain the same notation as in the previous section.

\subsection{The $\imath$root datum}\label{sec:iwt}
Let $k$ be an algebraically closed field of char $\neq 2$. Following Springer \cite{Sp87}*{1.1}, a torus $S_k$ of $G_k$ is called \emph{split} if $\theta_k(s)=s^{-1}$ for any $s\in S_k$. Let $T_k$ be a $\theta_k$-stable maximal torus of $G_k$, which contains a maximal split torus. Let $B_k$ be a Borel subgroup of $G_k$ containing $T_k$, such that $\theta_k(B_k)\cap B_k$ has the minimal dimension. The existence of the such a pair $(T_k,B_k)$ was proved in \cite{Sp87}*{1.4}. Let $U_k$ be the unipotent radical of $B_k$. Let $X$ be the group of characters on $T_k$. We shall view $X$ also as the character group of $B_k$ in the natural way. Let $Y$ be the group of cocharacters on $T_k$. We write $\langle\;,\;\rangle$ to denote the canonical pairing between $Y$ and $X$. Let $\{\alpha_i\}_{i\in \I}$ be the set of simple roots and $\{\alpha_i^\vee\}_{i\in\I}$ be the set of simple coroots. For $\lambda,\mu\in X$, we write $\lambda\leq \mu$ if and only if $\mu-\lambda$ is a non-negative linear combination of simple roots. Set $X^+=\{\mu\in X\mid \langle \alpha_i^\vee,\mu\rangle\geq 0,\text{ for any }i\in\I\}$ to be the set of \emph{dominant weights}. Let $Q=\mathbb{Z}\mathbb[\alpha_i\mid i\in\I]$ be the root lattice, and $E=\mathbb{R}\otimes_{\mathbb{Z}} Q$ be the associated real vector space. We equip $E$ with a Euclidean structure $(\cdot,\cdot)$, such that $\langle \alpha_i^\vee,\alpha_j\rangle=\frac{(\alpha_i,\alpha_j)}{2(\alpha_i,\alpha_i)}$ for any $i,j$ in $\I$. Then $\{\alpha_i\mid i\in\I\}$ forms a set of simple roots of a root system in $E$. Let $W=\langle s_i\mid i\in\I\rangle$ be the Weyl group associated with the root system. 

Since $\theta_k$ leaves $T_k$ stable, it induces involutions $\theta_X$ and $\theta_Y$ on the lattices $X$ and $Y$, respectively. It is known \cite{Sp87} that there is a subset $\I_\bullet\subset\I$ and an involution $\tau:\I\rightarrow\I$ preserving $\I_\bullet$, such that: (i) $w_\bullet \alpha_i=-\alpha_{\tau i}$ for any $i\in\I_\bullet$, and (ii) $\theta_X(\alpha_i)=-w_\bullet \alpha_{\tau i}$ for any $i\in\I$. Here $w_\bullet$ is the longest element in the parabolic subgroup $W_{\I_\bullet}\subset W$ associated with the subset $\I_\bullet$. Write $\I_\circ=\I-\I_\bullet$. Then $\tau(\I_\circ)=\I_\circ$. Let us fix a set $\Ir\subset\I_\circ$ of representatives of $\tau$-orbits on $\I_\circ$. The tuple $(\I=\I_\bullet\sqcup\I_\circ,\tau,Y,X,\langle\;,\;\rangle,\theta_X,\theta_Y)$ is called an \emph{$\imath$root datum} associated with $(G_k,\theta_k)$. The pair $(G_k,\theta_k)$ is completely determined by its $\imath$root datum up to isomorphisms.

Following \cite{BW18}*{(3.3)}, we write
\begin{equation*}
    \breve{X}=\{\mu-\theta_X(\mu)\mid\mu\in X\}
\end{equation*}
to be the sublattice of $X$. We set $\iwtp=\iwt\cap X^+$. For $\lambda\in X$, let us write $\overline{\lambda}=\lambda-\theta_X(\lambda)$. Let us identify $\iwt$ with the character lattice of the torus $\overline{T}_k = T_k/T_k^{\theta_k}$ in the canonical way.

\subsection{The spherical weight lattice}\label{sec:scs}

We study the structure of the lattice $\iwt$ in this section.  

\begin{lemma}\label{le:bar}
    If $\lambda\in X^+$, then $\overline{\lambda} \in \iwtp$.
\end{lemma}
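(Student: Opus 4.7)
The containment $\overline{\lambda}\in\iwt$ is automatic from the definition of $\iwt$ with $\mu=\lambda$, so the task is to show that $\overline{\lambda}=\lambda-\theta_X(\lambda)$ is dominant, i.e.\ $\langle\alpha_i^\vee,\overline{\lambda}\rangle\geq 0$ for every $i\in\I$. The plan is to transfer the action of $\theta_X$ on $\lambda$ to an action of $\theta_Y$ on $\alpha_i^\vee$ via the adjunction identity $\langle\alpha_i^\vee,\theta_X(\lambda)\rangle=\langle\theta_Y(\alpha_i^\vee),\lambda\rangle$, then read off the sign using the compatibility of $\theta_Y$ with the simple coroots that is dual to the given formulas $w_\bullet\alpha_i=-\alpha_{\tau i}$ for $i\in\I_\bullet$ and $\theta_X(\alpha_i)=-w_\bullet\alpha_{\tau i}$ for all $i\in\I$. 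These dualize to $\theta_Y(\alpha_i^\vee)=-w_\bullet\alpha_{\tau i}^\vee$ for all $i$, and $w_\bullet\alpha_i^\vee=-\alpha_{\tau i}^\vee$ for $i\in\I_\bullet$.

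The argument then splits into two cases according to whether $i\in\I_\bullet$ or $i\in\I_\circ$. For $i\in\I_\bullet$, the two dual relations combine (using $\tau(\I_\bullet)=\I_\bullet$ and $\tau^2=\mathrm{id}$) to give $\theta_Y(\alpha_i^\vee)=\alpha_i^\vee$, so $\langle\alpha_i^\vee,\theta_X(\lambda)\rangle=\langle\alpha_i^\vee,\lambda\rangle$ and hence $\langle\alpha_i^\vee,\overline{\lambda}\rangle=0\geq 0$. For $i\in\I_\circ$, the simple coroot $\alpha_{\tau i}^\vee$ is not in the subsystem spanned by $\I_\bullet$, and $w_\bullet$ permutes the positive coroots outside this subsystem; therefore $w_\bullet\alpha_{\tau i}^\vee$ is a positive coroot, which makes $\theta_Y(\alpha_i^\vee)=-w_\bullet\alpha_{\tau i}^\vee$ a negative coroot. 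Since $\lambda$ is dominant this yields $\langle\theta_Y(\alpha_i^\vee),\lambda\rangle\leq 0$, and consequently
\[
\langle\alpha_i^\vee,\overline{\lambda}\rangle=\langle\alpha_i^\vee,\lambda\rangle-\langle\theta_Y(\alpha_i^\vee),\lambda\rangle\geq 0.
\]

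The only point that requires any care is the fact that $w_\bullet$ sends each simple coroot $\alpha_{\tau i}^\vee$ with $\tau i\in\I_\circ$ to a positive coroot; this is the standard statement that the longest element of a parabolic Weyl subgroup preserves the set of positive (co)roots lying outside the parabolic root subsystem, and is the only combinatorial fact about $w_\bullet$ that is needed beyond the defining relations of the $\imath$root datum. Everything else is a routine sign chase, so I do not expect a serious obstacle in the write-up.
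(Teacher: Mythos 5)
Your proposal is correct and follows essentially the same route as the paper: rewrite $\langle\alpha_i^\vee,\overline{\lambda}\rangle$ as $\langle\alpha_i^\vee-\theta_Y(\alpha_i^\vee),\lambda\rangle$ and then observe that $\theta_Y(\alpha_i^\vee)=\alpha_i^\vee$ for $i\in\I_\bullet$ while $\theta_Y(\alpha_i^\vee)=-w_\bullet\alpha_{\tau i}^\vee$ is a negative coroot for $i\in\I_\circ$, so that $\alpha_i^\vee-\theta_Y(\alpha_i^\vee)$ pairs nonnegatively with a dominant $\lambda$. You spell out the dualization and the positivity of $w_\bullet\alpha_{\tau i}^\vee$ a bit more explicitly than the paper does, but the argument is the same.
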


\begin{proof}
    It follows from the definition that $\overline{\lambda}\in \iwt$. It suffices to show that $\overline{\lambda}\in X^+$. For $i\in\I$, we have
    $
    \langle \alpha^\vee_i,\overline{\lambda}\rangle=\langle \alpha_i^\vee-\theta_Y(\alpha_i^\vee),\lambda\rangle.   
    $
    Note that $\theta_Y(\alpha_i^\vee)=\alpha_i^\vee$ if $i\in\I_\bullet$, and $\theta_Y(\alpha_i^\vee)=-w_\bullet\alpha_{\tau i}^\vee$ which is a negative coroot. Hence $\alpha_i^\vee-\theta_Y(\alpha_i^\vee)$ is either zero or a sum of positive coroot. Therefore $\langle\alpha_i^\vee,\overline{\lambda}\rangle$ is nonnegative. We complete the proof.
\end{proof}

\begin{lemma}\label{le:dec}
    The lattice $\iwt$ is generated by $\iwtp$ as an additive group, that is, for any $\mu\in\iwt$, there are $\mu'$ and $\mu''$ in $\iwtp$, such that $\mu=\mu'-\mu''$.
\end{lemma}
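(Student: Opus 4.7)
The plan is to reduce the statement to Lemma \ref{le:bar} by exploiting the fact that the dominant cone $X^+$ already generates the full weight lattice $X$ as an abelian group.

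First I would observe that for any $\mu \in \breve{X}$, by definition there exists some (not necessarily dominant) weight $\lambda \in X$ with $\mu = \lambda - \theta_X(\lambda) = \overline{\lambda}$, and that the map $\lambda \mapsto \overline{\lambda}$ is additive in $\lambda$. So the whole task is to decompose the input $\lambda$ as a difference of two dominant weights and then average via $\overline{(\cdot)}$.

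Next I would record the standard fact that $X = X^+ - X^+$ as an additive group: pick any dominant weight $\rho_0 \in X^+$ with $\langle \alpha_i^\vee, \rho_0\rangle > 0$ for all $i \in \I$ (for instance the sum of positive roots, which lies in $X$), and note that for any $\lambda \in X$ one has $\lambda + N\rho_0 \in X^+$ for sufficiently large $N \in \BZ_{\geq 0}$, hence $\lambda = (\lambda + N\rho_0) - N\rho_0$ is a difference of two dominant weights. Applying this to the chosen $\lambda$, write $\lambda = \lambda_1 - \lambda_2$ with $\lambda_1, \lambda_2 \in X^+$.

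Finally, using additivity of the bar operation,
\begin{equation*}
\mu \;=\; \overline{\lambda} \;=\; \overline{\lambda_1} - \overline{\lambda_2},
\end{equation*}
and by Lemma \ref{le:bar} each of $\overline{\lambda_1}, \overline{\lambda_2}$ lies in $\iwtp$, which gives the required decomposition $\mu = \mu' - \mu''$ with $\mu', \mu'' \in \iwtp$.

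I do not foresee any real obstacle; the only mildly delicate point is choosing the dominant weight $\rho_0$ that lies in $X$ (not merely in the ambient real vector space $E$), which matters since $G_k$ need not be simply connected. Taking $\rho_0 = \sum_{\alpha > 0}\alpha \in Q \subset X$ handles this uniformly.
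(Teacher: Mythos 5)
Your proof is correct and follows essentially the same route as the paper's: decompose the chosen $\lambda \in X$ with $\mu = \overline{\lambda}$ as a difference of dominant weights, then apply Lemma \ref{le:bar} and additivity of the bar map. The only difference is that you spell out the standard fact $X = X^+ - X^+$ (with the explicit choice $\rho_0 = \sum_{\alpha > 0}\alpha \in Q$), which the paper simply states as ``clear.''
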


\begin{proof}
    By definition we have $\mu=\overline{\lambda}$, for some $\lambda\in X$. It is clear that we can write $\lambda=\lambda'-\lambda''$, for $\lambda' $ and $\lambda''$ in $X^+$. Then $\mu=\overline{\lambda'}-\overline{\lambda''}$, where $\overline{\lambda'}$ and $\overline{\lambda''}$ belong to $\iwtp$ by Lemma \ref{le:bar}. 
\end{proof}

\begin{lemma}\label{le:min}
    Suppose $\lambda$ and $\mu$ belong to $\iwt$, and $\lambda\le\mu$. Then $2(\mu-\lambda)$ is a sum of elements in $\{\overline{\alpha_i}\mid i\in\Ir\}$. 
\end{lemma}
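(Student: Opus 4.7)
The strategy is to exploit the fact that every element of $\iwt$ is $\theta_X$-antiinvariant, which lets me double $\mu-\lambda$ into an obvious nonnegative combination of the elements $\overline{\alpha_i}$. I set $\eta:=\mu-\lambda\in\iwt$ and first observe that $\theta_X(\eta)=-\eta$; this follows directly from the defining formula $\iwt=\{\nu-\theta_X(\nu)\mid\nu\in X\}$ together with $\theta_X^2=\mathrm{id}$. Since $\lambda\le\mu$, I can also write $\eta=\sum_{i\in\I}c_i\alpha_i$ with $c_i\in\BZ_{\ge 0}$. Combining these two descriptions immediately yields
\[
    2\eta \;=\; \eta-\theta_X(\eta) \;=\; \sum_{i\in\I} c_i\,\overline{\alpha_i},
\]
exhibiting $2\eta$ as a $\BZ_{\ge 0}$-combination of the $\overline{\alpha_i}$ over all of $\I$.

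The next step is the quick observation that $\overline{\alpha_i}=0$ for every $i\in\I_\bullet$. Indeed, for such $i$ the identities $w_\bullet\alpha_i=-\alpha_{\tau i}$ and $\theta_X(\alpha_i)=-w_\bullet\alpha_{\tau i}$, together with $\tau^2=\mathrm{id}$, collapse to $\theta_X(\alpha_i)=\alpha_i$. Hence the sum above reduces to $2\eta=\sum_{i\in\I_\circ}c_i\,\overline{\alpha_i}$.

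The genuinely nontrivial step is to reduce the index set from $\I_\circ$ to the set $\Ir$ of $\tau$-orbit representatives, and this is where I expect the main obstacle to lie. My plan is to prove the identity $\overline{\alpha_i}=\overline{\alpha_{\tau i}}$ for all $i\in\I_\circ$, which from the formula $\overline{\alpha_i}=\alpha_i+w_\bullet\alpha_{\tau i}$ is equivalent to showing that $\alpha_i-\alpha_{\tau i}$ is fixed by $w_\bullet$. To do so, I will write $w_\bullet\alpha_i=\alpha_i+\sum_{j\in\I_\bullet}m_j^{(i)}\alpha_j$ for $i\in\I_\circ$, apply the relation $\theta_X^2=\mathrm{id}$ to $\alpha_i$, and compare coefficients of the $\I_\bullet$-simple roots on both sides; this will force $m_j^{(i)}=m_j^{(\tau i)}$ for every $j\in\I_\bullet$, which is precisely the sought $w_\bullet$-invariance of $\alpha_i-\alpha_{\tau i}$.

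With $\overline{\alpha_i}=\overline{\alpha_{\tau i}}$ in hand, I will finish by grouping the sum $2\eta=\sum_{i\in\I_\circ}c_i\,\overline{\alpha_i}$ according to $\tau$-orbits on $\I_\circ$: each orbit contributes $c_i\,\overline{\alpha_i}$ when $\tau i=i$ and $(c_i+c_{\tau i})\,\overline{\alpha_i}$ otherwise, producing $2\eta=\sum_{i\in\Ir}n_i\,\overline{\alpha_i}$ with $n_i\in\BZ_{\ge 0}$, as required.
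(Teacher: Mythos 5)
Your proof is correct, and it takes a genuinely cleaner route than the paper's. Setting $\eta=\mu-\lambda=\sum_{i}c_i\alpha_i$ and observing that $\theta_X(\eta)=-\eta$, you get $2\eta=\eta-\theta_X(\eta)=\sum_i c_i\overline{\alpha_i}$ in one line, with nonnegative coefficients for free. The paper instead pre-announces the target identity
\[
2(\mu-\lambda)=\sum_{i\in\Ir,\,\tau i=i}c_i\overline{\alpha_i}+2\sum_{i\in\Ir,\,\tau i\neq i}c_i\overline{\alpha_i},
\]
derives from $\theta_X(\eta)=-\eta$ the coefficient relations $c_{\tau i}=c_i$ $(i\in\I_\circ)$ and $2c_j=\sum_{i\in\I_\circ}c_it_{ij}$ $(j\in\I_\bullet)$, and then verifies the claimed identity by expanding both sides. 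Your reduction of the index set from $\I$ to $\Ir$ is instead purely structural: you note $\overline{\alpha_j}=0$ for $j\in\I_\bullet$ and $\overline{\alpha_i}=\overline{\alpha_{\tau i}}$ for $i\in\I_\circ$, then group by $\tau$-orbits. In particular you never need the relation $c_{\tau i}=c_i$, since the lemma only requires a nonnegative combination and $(c_i+c_{\tau i})\overline{\alpha_i}$ already is one. Both proofs ultimately rest on $t_{ij}=t_{\tau i,j}$ for $i\in\I_\circ$, $j\in\I_\bullet$; the paper invokes it without proof at the step marked $(\heartsuit)$, while you indicate how it drops out of $\theta_X^2=\mathrm{id}$ — a welcome bit of added explicitness. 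Your reformulation of $\overline{\alpha_i}=\overline{\alpha_{\tau i}}$ as $w_\bullet$-invariance of $\alpha_i-\alpha_{\tau i}$ checks out; when carrying out the $\theta_X^2=\mathrm{id}$ computation, be sure to record that $\theta_X(\alpha_j)=\alpha_j$ for $j\in\I_\bullet$ (you establish this in your second paragraph), since this is what lets the $\I_\bullet$-coefficients compare cleanly.
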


\begin{proof}
    Since $\lambda\le\mu$, let us write $\mu-\lambda=\sum_{i\in\I}n_i\alpha_i$, where $n_i$ are nonnegative integers. We claim that
    \begin{equation}\label{eq:2mu}
        2(\mu-\lambda)=\sum_{i\in\Ir,\;\tau i=i}n_i\overline{\alpha_i}+2\sum_{i\in\Ir,\;\tau i\neq i}n_i\overline{\alpha_i}.
    \end{equation}
    
    It follows from the definition that for $i\in \I_\bullet$, we have $\theta_X(\alpha_i)=\alpha_i$, and for $i\in\I_\circ$, we have
    \begin{equation}\label{eq:tx}
    \theta_X(\alpha_i)=-\alpha_{\tau i}-\sum_{j\in \I_\bullet}t_{ij}\alpha_j, \text{ where $t_{ij}$ are nonnegative integers.}
    \end{equation}

Since $\mu-\lambda$ belongs to $\iwt$, we have $\theta_X(\sum_{i\in\I}n_i\alpha_i)=-\sum_{i\in\I}n_i\alpha_i$. By \eqref{eq:tx}, we have 
\begin{align*}   \sum_{i\in\I}n_i\theta_X(\alpha_i)&=\sum_{i\in\I_\bullet}n_i\alpha_i-\sum_{i\in\I_\circ}n_i\big(\alpha_{\tau i}+\sum_{j\in\I_\bullet}t_{ij}\alpha_j\big)\\
    &=-\sum_{i\in\I_\circ}n_i\alpha_{\tau i}+\sum_{j\in\I_\bullet}\big(n_j-\sum_{i\in\I_\circ}n_it_{ij}\big)\alpha_j.
\end{align*}

Therefore we deduce that $n_{\tau i}=n_i$   for $i\in\I_\circ$, and  $2n_j=\sum_{i\in\I_\circ}n_it_{ij}$   for $j\in\I_\bullet$.

By \eqref{eq:2mu}, we have
\begin{align*}
    &\sum_{i\in\Ir,\;\tau i=i}n_i\overline{\alpha_i}+2\sum_{i\in\Ir,\;\tau i\neq i}n_i\overline{\alpha_i}\\=&\sum_{i\in\Ir,\;\tau i=i}n_i\big(2\alpha_i+\sum_{j\in\I_\bullet}t_{ij}\alpha_j\big)+2\sum_{i\in\Ir,\;\tau i\neq i}n_i\big(\alpha_i+\alpha_{\tau i}+\sum_{j\in\I_\bullet}t_{ij}\alpha_j\big)\\=&2\sum_{i\in\I_\circ}n_i\alpha_i+\sum_{j\in\I_\bullet}\big(\sum_{i\in\Ir,\;\tau i=i}n_it_{ij}+\sum_{i\in\Ir,\;\tau i\neq i}2n_it_{ij}\big)\alpha_j\\\stackrel{(\heartsuit)}{=}&2\sum_{i\in\I_\circ}n_i\alpha_i+\sum_{j\in\I_\bullet}\big(\sum_{i\in\I_\circ}n_it_{ij}\big)\alpha_j\\=&2\sum_{i\in\I_\circ}n_i\alpha_i+2\sum_{j\in\I_\bullet}n_j\alpha_j\\=&2\sum_{i\in\I}n_i\alpha_i.
\end{align*}
Here $(\heartsuit)$ follows from the fact that $t_{ij}=t_{\tau i,j}$, for $i\in\I_\circ$ and $j\in\I_\bullet$. Hence we proved the equality \eqref{eq:2mu}, which completes the proof of the lemma.
\end{proof}

\subsection{Spherical root system}\label{subsec:spherical}It is clear the set $\{\overline{\alpha_i}\mid i\in\Ir\}$ is linearly independent in the Euclidean space $E$. Let $\breve{E}\subset E$ be the subspace spanned by this set. For each $i\in\Ir$, set $\alpha_i'$ to be the generator of the semigroup $\mathbb{R}_+\overline{\alpha_i}\cap \iwt$. We call elements $\alpha_i'$ the \emph{spherical roots}. By \cite{SV}*{the discussion afetr Remark 2.1.1}, the set $\{\alpha_i'\mid i\in\Ir\}$ forms a set of simple roots of a root system in the Euclidean space $\breve{E}$, which is call the \emph{spherical root system}. Note that although \cite{SV} assumes the characteristic of $k$ is zero, the set $\{\alpha_i'\mid i\in\Ir\}\subset \iwt$ is independent of the base field $k$. 

\subsection{The Borel eigenvalues and eigenfunctions} \label{sec:BE}

For any $k$-linear space $V$ where $B_k$ acts linearly and any $\mu\in X$, set
\begin{equation}\label{eq:Vm}
V^{(\mu)}=\{v\in V\mid b\cdot v=\mu(b)v,\text{ for any }b\in B_k\}
\end{equation}
to be the subspace of $B_k$-eignfunctions associated with the character $\mu$. We set
\begin{equation}
    V^{U_k}=\{v\in V\mid u\cdot v=v,\text{ for any }u\in U_k\}
\end{equation}
to be the subspace of $U_k$-invariants. When $V$ is finite-dimensional, it is clear that $V^{U_k}$ is a direct sum of the subspaces $V^{(\mu)}$ for various $\mu\in X$.

The coordinate ring $k[G_k/K_k]$ admits a $G_k$-action in the natural way: for $g\in G_k$, $h\in G_k/K_k$, and $f\in k[G_k/K_k]$, we have $(g\cdot f)(h)=f(g^{-1}h)$. It follows by \cite{BS}*{Theorem 2} that $k[G_k/K_k]^{(\mu)}$ is not a zero space if and only if $\mu\in\iwtp$, and in which case the space $k[G_k/K_k]^{(\mu)}$ is 1-dimensional. 

For $\mu\in\iwtp$, we take $\chi_\mu\in k[G_k/K_k]^{(\mu)}$ such that $\chi_\mu(eK_k)=1$. Such function exists and is unique since $k[G_k/K_k]^{(\mu)}$ is 1-dimensional and $B_k\cdot eK_k\subset G_k/K_k$ is an open subset. 

Let $k(G_k/K_k)$ be the field of rational functions on $G_k/K_k$. For $\mu\in\iwt$, by Lemma \ref{le:dec} we can find $\mu'$, $\mu''$ in $\iwtp$, such that $\mu=\mu'-\mu''$, and define $\chi_{\mu}=\chi_{\mu'}/\chi_{\mu''}$ in $k(G_k/K_k)^{(\mu)}$. It is clear that $\chi_\mu$ is independent of the choice of $\mu'$ and $\mu''$. We remark that $\chi_\mu\cdot\chi_\lambda=\chi_{\mu+\lambda}$, for $\mu,\lambda\in \iwt$.

\subsection{The quantization, integral model, and good filtration}\label{sec:int}
We summarize some results on symmetric spaces in  \cite{BS}  to be used in this paper.

\begin{thmintro}\cite{BS}\label{thm:int}
    Given an $\imath$root datum, there is a non-commutative $\mathbb{Z}[q,q^{-1}]$-algebra $\mathbf{O}_q(G/K)$, and a (dual canonical) basis $B_q(G/K)\subset \mathbf{O}_q(G/K)$ as a free $\mathbb{Z}[q,q^{-1}]$-module. For any $\mu\in\iwtp$, there is a finite subset $B_q(G/K)_{\le\mu}$ of $B_q(G/K)$, such that $B_q(G/K)_{\le \mu}\subset B_q(G/K)_{\le \lambda}$ if $\mu\le\lambda$, and $B_q(G/K)$ is a union of $B_q(G/K)_{\le\mu}$ for all $\mu\in\iwtp$. 

    Let $\mathbf{O}(G/K)=\mathbb{Z}\otimes_{q\mapsto 1}\mathbf{O}_q(G/K)$, and $B(G/K)$ be the image of $B_q(G/K)$ under the base change. Then $\mathbf{O}(G/K)$ is a commutative ring, with the (dual canonical) basis $B(G/K)\subset \mathbf{O}(G/K)$ as a free $\mathbb{Z}$-module. For any $\mu\in\iwtp$, let $B(G/K)_{\le \mu}\subset B(G/K)$ be the image of $B_q(G/K)_{\le \mu}$ under the base change.
    
    For any algebraically closed field $k$ with characteristic not 2, the following properties hold:

    (1) The base change $k\otimes_\mathbb{Z}\mathbf{O}(G/K)$ is canonically isomorphic to the coordinate ring $k[G_k/K_k]$ of the symmetric space of the given type as $k$-algebras.

    (2) The dual canonical basis $B(G/K)$ specializes to a dual canonical basis of $k[G_k/K_k]$.

    (3) For any $\mu\in\iwtp$, let $\mathbf{O}(G/K)_{\le \mu}$ be the $\mathbb{Z}$-submodule spanned by $B(G/K)_{\le\mu}$, and let $k[G_k/K_k]_{\le\mu}\cong k\otimes_\mathbb{Z}\mathbf{O}(G/K)_{\le\mu}$ be the subspace of $k[G_k/K_k]$ under the isomorphism in (1). Then $k[G_k/K_k]_{\le\mu}$ is a $G_k$-submodule. Moreover the quotient module $k[G_k/K_k]_{\le\mu}/k[G_k/K_k]_{<\mu}$ is isomorphic to the dual Weyl module with the highest weight $\mu$. Here $k[G_k/K_k]_{<\mu}$ is the union of $k[G_k/K_k]_{\le\lambda}$, for all $\lambda\in\iwtp$ with $\lambda<\mu$.

    (4) For $\mu',\mu''\in \iwtp$, one has $k[G_k/K_k]_{\le\mu'}\cdot k[G_k/K_k]_{\le\mu''}\subset k[G_k/K_k]_{\mu'+\mu''}$.
\end{thmintro}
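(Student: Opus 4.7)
The plan is to construct everything first at the quantum level over $\BZ[q,q^{-1}]$ and then specialize. The starting point is the Drinfeld--Jimbo quantum group $\mathbf{U}_q$ together with its coideal subalgebra $\mathbf{U}_q^\imath$ attached to the given $\imath$root datum, in the framework of quantum symmetric pairs (Letzter, Kolb). The quantized coordinate ring $\mathbf{O}_q(G)$ carries a natural right $\mathbf{U}_q$-action, and I would define $\mathbf{O}_q(G/K)$ as the subalgebra of $\mathbf{U}_q^\imath$-invariants. A crucial ingredient is that for each $\mu\in\iwtp$ the integrable module $V(\mu)$ admits, up to scalar, a unique $\mathbf{U}_q^\imath$-fixed vector $\eta_\mu^\imath$, which $q$-deforms the classical statement that $V_k(\mu)^{K_k}$ is one-dimensional precisely on spherical weights.

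Next I would transport Lusztig's dual canonical basis of $\mathbf{O}_q(G)$ to $\mathbf{O}_q(G/K)$. Using a Peter--Weyl-type decomposition $\mathbf{O}_q(G)\cong\bigoplus_\mu V(\mu)\otimes V(\mu)^*$, pairing with $\eta_\mu^\imath$ in the first tensor factor produces a copy of $V(\mu)^*$ inside $\mathbf{O}_q(G/K)$. The dual $\imath$-canonical basis elements labelled by $\mu$ are then the images of dual canonical basis vectors of $V(\mu)^*$ under this map. The subset $B_q(G/K)_{\le\mu}$ is defined as the union over $\lambda\in\iwtp$ with $\lambda\le\mu$ of such basis vectors, giving the filtration. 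The multiplicativity property (4) follows from the fact that all summands of $V(\mu')\otimes V(\mu'')$ have highest weights bounded above by $\mu'+\mu''$ in the dominance order, and this tensor decomposition is compatible with the $\imath$-invariant vectors after passing to invariants.

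For the specialization properties I would proceed in two stages. First, setting $q=1$ gives a commutative ring by comparison with the classical algebra of $K_k$-invariants in $k[G_k]$. Second, base changing to $k$ and invoking the algebraic Peter--Weyl theorem identifies the $\mu$-piece of $k[G_k]$ with $V_k(\mu)^*\otimes V_k(\mu)$; taking $K_k$-invariants on the right factor yields exactly the dual Weyl summand $\nabla(\mu)$, which is precisely property (3) once one checks that $\mathbf{O}(G/K)_{\le\mu}$ as defined spans this subspace. Property (2) follows because the basis was constructed to specialize correctly by design.

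The main obstacle will be establishing the integrality and triangularity of the dual $\imath$-canonical basis in a manner that is compatible with the filtration across all characteristics. In characteristic zero the Peter--Weyl decomposition and complete reducibility make the identifications transparent, but in positive characteristic one must work with an integral form of $\mathbf{U}_q^\imath$ and verify that the $\imath$-invariant vectors $\eta_\mu^\imath$ remain unimodular after base change, and that the resulting basis of $\mathbf{O}_q(G/K)$ continues to be a $\BZ[q,q^{-1}]$-basis respecting the filtration. This is where the $\imath$-canonical basis theory of Bao--Wang, as extended in \cite{BS}, supplies the essential bar-invariant, triangular integral basis that makes the good filtration statement robust under arbitrary specialization.
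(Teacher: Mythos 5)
First, note that the paper does not prove Theorem~\ref{thm:int} at all: it is imported wholesale from \cite{BS}, as the citation in the theorem header indicates, so there is no internal argument to measure your proposal against. What you have written is a reconstruction of the strategy of \cite{BS}, and at that level of generality it is the right strategy: a quantum symmetric pair $(\mathbf{U}_q,\mathbf{U}_q^\imath)$ attached to the $\imath$root datum, spherical vectors in the modules with highest weight $\mu\in\iwtp$, transport of a dual ($\imath$-)canonical basis to the invariant subalgebra, and specialization $q\mapsto 1$ followed by base change to $k$.

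As a proof, however, the proposal defers essentially all of the content. Two concrete points. (i) Your argument for property (3) --- algebraic Peter--Weyl identifying the ``$\mu$-piece'' of $k[G_k]$ with $V_k(\mu)^*\otimes V_k(\mu)$ and then taking $K_k$-invariants on one factor --- is only valid in characteristic $0$. In characteristic $p$ the ring $k[G_k]$ is not a direct sum of such pieces (it merely has a good filtration), the functor of $K_k$-invariants is not exact, and the dimension of the invariants in a dual Weyl module can a priori jump; the whole point of (1)--(3) is that the $\imath$-canonical basis supplies a based $\mathbb{Z}$-form for which taking invariants commutes with base change and the filtration subquotients are exactly the dual Weyl modules. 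Saying for (2) that ``the basis was constructed to specialize correctly by design'' is circular without establishing this. (ii) The existence, uniqueness, and above all the \emph{integrality} over $\mathbb{Z}[q,q^{-1}]$ of the spherical vectors $\eta_\mu^\imath$ and of the resulting basis --- as opposed to their existence over $\mathbb{Q}(q)$, where Letzter--Kolb theory applies --- is precisely the hard input from \cite{BW18} and \cite{BS}, and your sketch gives no mechanism for it beyond invoking those references. In short: the outline is faithful to the cited source, but every step that separates the theorem from its characteristic-zero shadow is left to that source rather than proved.
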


Recall the rational functions $\chi_\mu$ on $G_k/K_k$, for $\mu\in\iwt$. The following lemma is straightforward from \cite{BS}.
\begin{lemma}\label{le:Uk}
    For $\lambda,\mu\in\iwtp$, the element $\chi_\mu$ belongs to $k[G_k/K_k]_{\le\lambda}$ if and only if $\mu\le\lambda$. Moreover the space $k[G_k/K_k]_{\le\lambda}^{U_k}$ is spanned by various $\chi_\mu$ for $\mu\le \lambda$ as a $k$-vector space. 
\end{lemma}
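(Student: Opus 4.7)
The plan is to exploit the good-filtration structure from Theorem~\ref{thm:int} by pinning down the exact filtration level of each $\chi_\mu$, and to read off both directions of the first statement and the spanning claim from the fact that each successive quotient $k[G_k/K_k]_{\le\nu}/k[G_k/K_k]_{<\nu}$ is the dual Weyl module of highest weight $\nu$, whose $U_k$-invariants form a one-dimensional subspace concentrated in the single $T_k$-weight $\nu$.

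First I would locate $\chi_\mu$ in the filtration. Since the filtration exhausts $k[G_k/K_k]$, there is some $\nu\in\iwtp$ with $\chi_\mu \in k[G_k/K_k]_{\le\nu}$; pick such $\nu$ so that $\chi_\mu \notin k[G_k/K_k]_{<\nu}$. The image of $\chi_\mu$ in $k[G_k/K_k]_{\le\nu}/k[G_k/K_k]_{<\nu}$ is then a nonzero $U_k$-fixed vector of $T_k$-weight $\mu$; as the only such vectors in a dual Weyl module of highest weight $\nu$ have weight $\nu$, this forces $\mu=\nu$, yielding $\chi_\mu \in k[G_k/K_k]_{\le\mu}$ but $\chi_\mu \notin k[G_k/K_k]_{<\mu}$. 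The ``if'' direction of the first statement is then immediate from the nesting $k[G_k/K_k]_{\le\mu}\subset k[G_k/K_k]_{\le\lambda}$ for $\mu\le\lambda$ (part of Theorem~\ref{thm:int}).

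For the ``only if'' direction I would iterate the same dichotomy on $\lambda$. Assume $\chi_\mu\in k[G_k/K_k]_{\le\lambda}$ with $\mu\not\le\lambda$; either the image of $\chi_\mu$ in the quotient by $k[G_k/K_k]_{<\lambda}$ is nonzero (forcing $\mu=\lambda$, a contradiction), or $\chi_\mu\in k[G_k/K_k]_{\le\lambda'}$ for some $\lambda'<\lambda$, still with $\mu\not\le\lambda'$. The descent is strictly decreasing and sits inside the finite-dimensional space $k[G_k/K_k]_{\le\lambda}$ (finite since $B_q(G/K)_{\le\lambda}$ is), so it terminates and delivers the contradiction. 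The spanning claim then drops out from a weight-space decomposition of the $T_k$-module $k[G_k/K_k]_{\le\lambda}^{U_k}$: each weight component $k[G_k/K_k]_{\le\lambda}\cap k[G_k/K_k]^{(\mu)}$ is at most one-dimensional and equals $k\chi_\mu$ when nonzero (by \cite{BS}*{Theorem 2}), and the first part of the lemma identifies the nonzero components as precisely those with $\mu\in\iwtp$ and $\mu\le\lambda$.

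The main obstacle is making the descent step in the ``only if'' direction actually terminate; this is resolved purely by the finite-dimensionality of each filtration piece. A subtler but routine input is the one-dimensionality of $U_k$-invariants at the top weight of a dual Weyl module, which holds in arbitrary characteristic.
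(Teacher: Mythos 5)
Your strategy rests on treating the family $\{k[G_k/K_k]_{\le\nu}\}_{\nu\in\iwtp}$ as a chain (or at least a directed system), and that is where the argument breaks. The poset $(\iwtp,\le)$ is in general not directed: for $SL_3/SO_3$ one has $\iwt=2X$ and $2\omega_1,2\omega_2\in\iwtp$ admit no common upper bound under $\le$, since $2\omega_1-2\omega_2$ does not lie in the root lattice; the group case of Remark~\ref{remark:groupcase} with $SL_2$ gives another instance ($\omega$ and $2\omega$). Consequently your opening step --- ``there is some $\nu$ with $\chi_\mu\in k[G_k/K_k]_{\le\nu}$'' --- is unjustified: Theorem~\ref{thm:int} only guarantees that each element of $B(G/K)$ lies in some $B(G/K)_{\le\nu}$, so a general element of $k[G_k/K_k]$ lies in a finite \emph{sum} $\sum_i k[G_k/K_k]_{\le\nu_i}$ with possibly incomparable $\nu_i$, not in a single filtration piece. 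For the same reason the dichotomy driving your descent is false: $k[G_k/K_k]_{<\lambda}$ is the span of $\bigcup_{\lambda'<\lambda}B(G/K)_{\le\lambda'}$, and membership in that span does not give membership in a single $k[G_k/K_k]_{\le\lambda'}$. The obstacle you single out at the end (termination of the descent) is not the real one; the descent never gets to take its first step.

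The correct mechanism, consistent with how the paper argues elsewhere (proofs of Lemma~\ref{lem:Ukp} and Corollary~\ref{cor:Lp}), is the exactness of $(-)^{U_k}$ on modules with good filtration. Since $k[G_k/K_k]_{<\mu}$ has a good filtration, the map $(k[G_k/K_k]_{\le\mu})^{U_k}\to\big(k[G_k/K_k]_{\le\mu}/k[G_k/K_k]_{<\mu}\big)^{U_k}$ is surjective by \cite{Gross}; a preimage of the highest-weight vector of the dual Weyl module is a $B_k$-eigenfunction of weight $\mu$, hence a nonzero multiple of $\chi_\mu$ by the one-dimensionality of $k[G_k/K_k]^{(\mu)}$ from \cite{BS}*{Theorem~2}. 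This places $\chi_\mu$ in $k[G_k/K_k]_{\le\mu}$ and, with the nesting, gives the ``if'' direction. Iterating the same exact sequence along a good filtration of $k[G_k/K_k]_{\le\lambda}$ with subquotients the dual Weyl modules of highest weights $\nu\le\lambda$ shows that the $B_k$-eigenvalues of $(k[G_k/K_k]_{\le\lambda})^{U_k}$ are exactly $\{\nu\in\iwtp\mid\nu\le\lambda\}$, each with multiplicity one, which yields the ``only if'' direction and the spanning claim at once. The two inputs you do use correctly --- concentration of $U_k$-invariants of a dual Weyl module in the top weight, and one-dimensionality of the $B_k$-eigenspaces --- are indeed the right ones; what is missing is the surjectivity on $U_k$-invariants, which cannot be replaced by the descent you propose.
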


Let $k(G_k/K_k)$ be the field of rational functions on $G_k/K_k$.
\begin{lem}\label{le:Lambda}
The set of $B_k$-eigenfunctions on $k(G_k/K_k)$ are  $\{\chi_{\mu} \vert \mu \in   \breve{X} \}$, up to multiplication by scalars.
\end{lem}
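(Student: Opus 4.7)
The plan is to prove two assertions which together establish the lemma. First, any two nonzero $B_k$-eigenfunctions in $k(G_k/K_k)$ with the same weight are proportional. Second, the weight of any nonzero rational $B_k$-eigenfunction must lie in $\breve{X}$. Combined with the fact from Section \ref{sec:BE} that each $\chi_\mu$ with $\mu\in\breve{X}$ is a nonzero $B_k$-eigenfunction of weight $\mu$, these two assertions pin down the set of $B_k$-eigenfunctions in $k(G_k/K_k)$ as exactly the scalar multiples of the $\chi_\mu$ for $\mu\in\breve{X}$.

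For the first assertion, I would exploit sphericality: the orbit $B_k\cdot eK_k\subset G_k/K_k$ is open and dense (as already used in Section \ref{sec:BE}), so any $B_k$-invariant rational function must be constant on this dense orbit and hence a constant in $k$. The ratio of two $B_k$-eigenfunctions of the same weight is $B_k$-invariant, so they differ by a nonzero scalar.

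For the second assertion, given $f\in k(G_k/K_k)^{(\mu)}$, I would consider the ideal $I=\{h\in k[G_k/K_k]\mid hf\in k[G_k/K_k]\}$. It is nonzero (writing $f=p/q$ with $p,q\in k[G_k/K_k]$, we have $q\in I$) and is $B_k$-stable because $f$ itself is a $B_k$-eigenfunction. Since the $B_k$-action on $k[G_k/K_k]$ is locally finite, the Lie--Kolchin theorem applied to any finite-dimensional $B_k$-stable subspace of $I$ yields a nonzero $B_k$-eigenfunction $h\in I$, of some weight $\nu$. Then $hf\in k[G_k/K_k]$ is a $B_k$-eigenfunction of weight $\nu+\mu$. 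By Lemma \ref{le:Uk}, both $\nu$ and $\nu+\mu$ belong to $\iwtp\subset\breve{X}$, and therefore $\mu=(\nu+\mu)-\nu\in\breve{X}$.

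The main obstacle, such as it is, lies in the second assertion: producing a $B_k$-eigenfunction $h$ inside the ideal $I$. This rests on standard but nontrivial input, namely the local finiteness of the $B_k$-action on the coordinate ring together with the Lie--Kolchin theorem; everything else follows cleanly from the results already in the excerpt.
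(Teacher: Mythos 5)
Your proof is correct, and it takes a genuinely different route from the paper. The paper's own argument is very short: it cites \cite{Gan}*{Theorem~2.8} for the general structure theory of $B_k$-eigenfunctions on a spherical homogeneous space, and then only verifies (via \cite{BS}*{Theorem 2} and Lemma~\ref{le:dec}) that $\chi_\mu$ is a $B_k$-eigenfunction for each $\mu\in\breve{X}$ so as to identify the lattice. In contrast, you re-derive the structural facts from scratch: proportionality of eigenfunctions of equal weight via constancy of the $B_k$-invariant ratio on the dense open $B_k$-orbit, and containment of the weight in $\breve{X}$ via the ideal of denominators $I=\{h\in k[G_k/K_k]\mid hf\in k[G_k/K_k]\}$ together with local finiteness and Lie--Kolchin. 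Your $B_k$-stability check for $I$ is the only subtle point and it does hold, precisely because $f$ is a $B_k$-eigenfunction (so $(b\cdot h)f=\mu(b)^{-1}\,b\cdot(hf)$). What you buy with the longer argument is self-containment: your proof only needs the multiplicity-one description of $k[G_k/K_k]^{(\mu)}$ from \cite{BS}*{Theorem 2} (equivalently Lemma~\ref{le:Uk}) plus standard facts (dense $B_k$-orbit, local finiteness, Lie--Kolchin), without invoking the general theory of spherical embeddings. The paper's version is shorter but outsources the core content to \cite{Gan}.
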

\begin{proof}
It is clear that $\{\chi_{\mu} \vert \mu \in   \breve{X} \}$ are $B_k$-eigenfunctions (see \cite{Gan}*{Theorem~2.8}). 

It follows from \cite{BS}*{Theorem 2} (see also Lemma~ \ref{le:Uk}) and Lemma~\ref{le:dec} that $\chi_{\mu}$ is an $B_k$-eigenfunction in $k(G_k/K_k)$ for any $\mu \in \breve{X}$. 
\end{proof}
It follows that $\breve{X}$ is the weight lattice of $G_k / K_k$ considered in \cite{Gan}*{Definition~2.13}.

\subsection{The theory of spherical embeddings}

An embedding $V_k$ of $G_k/K_k$ is called \emph{simple} if there is a unique closed $G_k$-orbit in $V_k$. This includes all the affine embeddings \cite{LV}*{Theorem 6.7}. All the embeddings in this paper will be simple. For a simple embedding $V_k$, a $B_k$-stable prime divisor $D$ of $V_k$ which is not $G_k$-stable is called a \emph{color} if $D$ contains the closed $G_k$-orbit. A simple embedding is called \emph{toroidal} if it has no colors. A complete toroidal simple embedding of $G_k/K_k$ is called a \emph{canonical embedding} of $G_k/K_k$. 

A \emph{valuation} on $k(G_k/K_k)$ is a map
\[
v:k(G_k/K_k)^*=k(G_k/K_k)\backslash\{0\}\longrightarrow \mathbb{Q}
\]
such that
\begin{itemize}
    \item[(1)] $v(f_1+f_2)\ge \text{min}\{v(f_1),v(f_2)\}$ for $f_1,f_2\in k(G_k/K_k)^*$;
    \item[(2)] $v(f_1f_2)=v(f_1)+v(f_2)$, for $f_1,f_2\in k(G_k/K_k)$;
    \item[(3)] $v(k^*)=0.$
\end{itemize}
The field $k(G_k/K_k)$ admits a natural $G_k$-action. A valuation $v$ is called \emph{$G_k$-invariant} if $v(g\cdot f)=v(f)$, for any $g\in G_k$ and any $f\in k(G_k/K_k)^*$.  We denote by $\CV=\CV(G_k/K_k)$ the set of $G_k$-invariant valuations on $G_k/K_k$. 

By Lemma \ref{le:Lambda} $\iwt$ is the lattice consisting of eignvalues of $B_k$-semi-invariant rational functions on $G_k/K_k$. Let $_\mathbb{Q}\iwt^*=\text{Hom}_{\mathbb{Z}}(\iwt,\mathbb{Q})$. Any valuation $v$ on $k(G_k/K_k)$ determines an element $\varrho(v)$ in $_\mathbb{Q}\iwt^*$, where $\varrho(v)(\mu)=v(\chi_\mu)$ for $\mu\in\iwt$. It is known that the map $\varrho$ is injective when restricting to the set $\CV$ (cf. \cite{LV}*{Corollary~7.8}). Let us identify $\CV$ with its image in $_\mathbb{Q}\iwt^*$. 

For a simple embedding $V_k$ of $G_k/K_k$, the associated \emph{colored cone} $(\mathcal{C},\mathcal{F})$ is the pair where $\mathcal{F}$ is the set of colors and $\mathcal{C}\subset {_\mathbb{Q}\iwt^*}$ is the cone generated by the images of $G_k$-stable prime divisors of $V_k$ and elements in $\mathcal{F}$ under the map $\varrho$. A simple embedding is completely determined by its colored cone. By \cite[Theorem 4.2]{LV}, a canonical embedding corresponds to the colored cone $(\mathcal{V},\emptyset)$. In particular, a canonical embedding is unique if it exists.

\subsection{The wonderful compactifications}\label{sec:wond}

We assume that $G_k$ is adjoint in this subsection. A construction of the canonical embedding $P_k$ of $G_k/K_k$ was given by De Concini--Procesi in characteristic 0 and De Concini--Springer in positive characteristic $\neq 2$. They further showed that $P_k$ is smooth and call it the \emph{wonderful compactification} of $G_k/K_k$. One important tool to analyze the geometry of $P_k$ is the local structure theorem which we now recall.

Let $\mathring{P}_k\subset P_k$ be the complement  of all the $B_k$-stable divisors which are not $G_k$-stable. Let $\overline{T_k}=T_k/T_k^{\theta_k}$ be the torus contained in $G_k/K_k$. Let $N_k$ be the closure of $\overline{T_k}$ in $\mathring{P}_k$. Let $P_{\I_\bullet, k} \supset B_k$ be the parabolic subgroup associated with the subset of simple roots $\I_\bullet$, and let $U_{P_{\I_\bullet, k}}$ be the unipotent radical of $P_{\I_\bullet, k}$. Recall that $\iwt$ is identified with the character lattice of $\overline{T_k}$.

\begin{theorem}[\cite{DS}*{Proposition 3.8}]\label{thm:smooad}
The action map
    \[
    U_{P_{\I_\bullet, k}}\times N_k\longrightarrow \mathring{P_k}
    \]
    is an isomorphism of varieties. And $N_k\supset \overline{T_k}$ is an affine toric variety whose coordinate ring is the monoid algebra of $C=\mathbb{N}[-\overline{\alpha_i}\mid i\in\Ir] \subset \iwt$. 
\end{theorem}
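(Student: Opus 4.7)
The plan is to apply the general local structure theorem for spherical embeddings and then determine the coordinate ring of the toric slice $N_k$ via the valuation cone of $G_k/K_k$.

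First I would verify that $\mathring{P}_k$ is an affine open subvariety containing the closed orbit $Z_k$, and that its stabilizer in $G_k$ is $P_{\I_\bullet,k}$. Since $P_k$ is simple and toroidal, the $B_k$-stable prime divisors split into the $G_k$-stable boundary divisors (all containing $Z_k$) and the colors (closures of codimension-one $B_k$-orbits in $G_k/K_k$, which by the toroidal hypothesis do not contain $Z_k$); removing the colors from a normal simple spherical embedding with closed orbit produces an affine $B_k$-stable open neighborhood of $Z_k$. The stabilizer of $\mathring{P}_k$ is a parabolic $P_J\supset B_k$, and the identification $J=\I_\bullet$ follows from the fact that $s_i$ preserves the open $B_k$-orbit on $G_k/K_k$ if and only if $i\in\I_\bullet$, a consequence of the structure theory in \S\ref{sec:iwt}.

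Writing $P_{\I_\bullet,k}=U_{P_{\I_\bullet,k}}\cdot L_{\I_\bullet,k}$, I would show that the action map is an isomorphism by combining three ingredients: (i) $U_{P_{\I_\bullet,k}}$ is $\theta_k$-split, so it acts freely on a Zariski-open of $G_k/K_k$ and by extension on $\mathring{P}_k$; (ii) the semisimple part $[L_{\I_\bullet,k},L_{\I_\bullet,k}]$ has simple roots $\{\alpha_i\}_{i\in\I_\bullet}$ fixed by $\theta_k$, hence acts trivially on $Z_k$ and on the slice $N_k$; and (iii) the action map is a $B_k$-equivariant morphism between irreducible normal varieties of the same dimension, which is birational by an orbit computation. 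The freeness and the triviality of the semisimple action force bijectivity on points, and normality upgrades this to an isomorphism. Consequently the effective $L_{\I_\bullet,k}$-action on $N_k$ factors through the quotient torus $\overline{T_k}$, exhibiting $N_k$ as an affine toric $\overline{T_k}$-variety with character lattice $\iwt$.

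Finally, to identify $k[N_k]$ with the monoid algebra of $C$, I would use that the character semigroup $S\subset\iwt$ of $N_k$ consists of those $\mu$ for which $\chi_\mu$ is regular on $\mathring{P}_k$, equivalently, those $\mu$ with nonnegative valuation along every $G_k$-stable prime divisor of $P_k$. Since the canonical embedding corresponds to the colored cone $(\CV,\emptyset)$, this yields $S=\CV^\vee\cap\iwt$, and \S\ref{subsec:spherical} identifies $\CV$ with the anti-dominant chamber of the spherical root system. The main obstacle is the precise lattice normalization: one must show that the extremal generators of $\CV^\vee$, as elements of the lattice $\iwt$, are exactly the $-\overline{\alpha_i}$ rather than the primitive spherical simple roots $-\alpha_i'$ of \S\ref{subsec:spherical}. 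Resolving this point requires a careful analysis of the valuations of the $G_k$-stable boundary divisors of $P_k$ on the functions $\chi_\mu$, drawing on Lemma~\ref{le:min} to control which integer multiples of $\overline{\alpha_i}$ can occur; this is where the specific geometry of the wonderful compactification (and the adjointness of $G_k$) enters decisively.
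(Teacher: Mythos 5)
The paper does not prove this statement: it is a citation to De Concini--Springer \cite{DS}*{Proposition~3.8}, who construct the wonderful compactification explicitly (as a closure in a product of projective spaces) and derive the local structure by direct computation on an affine chart. Your proposal takes a genuinely different, more abstract route through the Luna--Vust theory, but within the logical framework of this paper it is circular. The step where you identify the character monoid $S$ of $N_k$ as $\CV^\vee\cap\iwt$ and then pass to $C$ rests on the determination of the valuation cone $\CV$ as the antidominant chamber $\{t\mid t(\overline{\alpha_i})\le 0\}$. That determination is Proposition~\ref{prop:va} (not \S\ref{subsec:spherical}, which only defines the spherical roots), and its proof in the adjoint case reads off the inclusion $\CV\subset\{t\mid t(\overline{\alpha_i})\le 0\}$ precisely from the regularity of $\chi_{-\overline{\alpha_i}}$ on $\mathring{P}_k$, i.e.\ from Theorem~\ref{thm:smooad} itself. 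What is available independently of Theorem~\ref{thm:smooad} is the opposite inclusion \cite{LV}*{Corollary~5.3}, which only yields $S\subset C$; the containment $S\supset C$ that you need is exactly the content of the theorem, so no argument via $\CV$ can close the loop.

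By contrast, the ``main obstacle'' you single out -- distinguishing $-\overline{\alpha_i}$ from the primitive spherical root $-\alpha_i'$ -- is not where the difficulty lies. For adjoint $G_k$ one has $X=Q$, and from \eqref{eq:tx} together with $t_{ij}=t_{\tau i,j}$ one sees that $\iwt$ is freely generated over $\BZ$ by $\{\overline{\alpha_i}\mid i\in\Ir\}$; since these are linearly independent in $E$, the monoid $\BN[-\overline{\alpha_i}\mid i\in\Ir]$ is automatically saturated in $\iwt$ and $\alpha_i'=\overline{\alpha_i}$. A similar caution applies to your argument for the isomorphism $U_{P_{\I_\bullet,k}}\times N_k\to\mathring{P}_k$: bijectivity on points and normality of $N_k$ are part of what must be established, and the local structure theorem for general spherical embeddings in positive characteristic (Knop, Tange) is itself a nontrivial result rather than a freely available black box. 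What is genuinely needed, and what De Concini--Springer supply, is an explicit model of $P_k$ on which one can compute $\mathring{P}_k$ and $N_k$ directly, without appealing to the valuation-cone description.
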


\subsection{The valuation cone}

We drop the assumption that $G_k$ is adjoint now. Recall that $\CV=\CV(G_k/K_k)$ is the cone consisting of $G_k$-invariant valuations on $k(G_k/K_k)$, which is identified with a subset of $_\mathbb{Q}\iwt^*=\text{Hom}_{\mathbb{Z}}(\iwt,\mathbb{Q})$. The following proposition is a consequence of the local structure theorem. 

\begin{prop}\label{prop:va}
We have 
    \begin{equation}\label{eq:va}
    \CV=\{t\in{_\mathbb{Q}\iwt^*}\mid t(\overline{\alpha_i})\le 0,\;\text{ for any }i\in\Ir\}\subset {_\mathbb{Q}\iwt^*}.
    \end{equation}
\end{prop}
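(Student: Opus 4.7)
The plan is to compute the colored cone of the canonical embedding of $G_k/K_k$ directly from the local structure theorem and match it against the right-hand side of \eqref{eq:va}, using that $\CV$ is by definition the uncolored part of the colored cone of a canonical embedding.

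First I reduce to the case where $G_k$ is of adjoint type. The isogeny $G_k\to G_k^{ad}$ is $\theta_k$-equivariant and induces a finite $G_k$-equivariant morphism $G_k/K_k\to G_k^{ad}/K_k^{ad}$, hence a finite extension of function fields. Under this extension, $G_k$-invariant valuations pull back compatibly, and the two spherical weight lattices $\iwt$ and $\iwt(G_k^{ad})$ differ only by finite index, so their $\mathbb{Q}$-duals are canonically identified. The vectors $\{\overline{\alpha_i}\}_{i\in\Ir}$ lie in the root lattice $Q$, hence in both versions of $\iwt$. Thus both sides of \eqref{eq:va} are unchanged by this reduction, and I may assume $G_k$ is adjoint.

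Now the wonderful compactification $P_k$ of $G_k/K_k$ is a smooth complete simple toroidal embedding, hence \emph{the} canonical embedding. By \cite{LV}*{Theorem 4.2}, its colored cone equals $(\CV,\emptyset)$. On the other hand, by the local structure theorem (Theorem \ref{thm:smooad}), the open $B_k$-stable subset decomposes as $\mathring{P}_k\cong U_{P_{\I_\bullet,k}}\times N_k$, with $N_k=Spec\,k[C]$ the affine toric variety for $C=\mathbb{N}[-\overline{\alpha_i}\mid i\in\Ir]\subset\iwt$. Since $U_{P_{\I_\bullet,k}}$ contributes no divisors, the $G_k$-stable prime divisors of $P_k$ are in bijection with the $\overline{T_k}$-stable prime divisors of $N_k$, which in turn correspond to the extremal rays of the dual cone $C^\vee=\{t\in{_\mathbb{Q}\iwt^*}\mid t(\overline{\alpha_i})\le 0,\;\forall i\in\Ir\}$. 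Hence the colored cone of $P_k$ equals $(C^\vee,\emptyset)$, and comparison with $(\CV,\emptyset)$ yields $\CV=C^\vee$.

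The main obstacle is the reduction step: one must verify carefully that the finite $G_k$-equivariant morphism $G_k/K_k\to G_k^{ad}/K_k^{ad}$ induces compatible identifications of the $G$-invariant valuation cones and of $\iwt\otimes_\mathbb{Z}\mathbb{Q}$, so that the cone equality established in the adjoint case descends to the original setting. This is a bookkeeping argument using $\theta$-equivariance of the isogeny and the behavior of $B_k$-eigenfunctions under the finite extension of function fields.
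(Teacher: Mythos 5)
Your adjoint-case argument is essentially sound and close in spirit to the paper's: both invoke the local structure theorem for the wonderful compactification and the fact that its colored cone is $(\CV,\emptyset)$. The paper is slightly more economical — it shows $v_D(\chi_{-\overline{\alpha_i}})\ge 0$ for each $G_k$-stable prime divisor $D$ directly from the regularity of $\chi_{-\overline{\alpha_i}}$ on $\mathring{P}_k$, rather than identifying all the rays of the cone — but the two computations encode the same information.

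The genuine gap is in your reduction step, which you flag as ``the main obstacle'' and then dismiss as bookkeeping. Proposition~\ref{prop:va} is stated for a general \emph{reductive} $G_k$, and for such $G_k$ the center $Z_k$ is positive-dimensional. Then $G_k/K_k\to G_k^{ad}/K_k^{ad}$ is \emph{not} a finite morphism, the extension $k(G_k^{ad}/K_k^{ad})\subset k(G_k/K_k)$ has positive transcendence degree, and the lattices $\iwt$ and $\iwt(G_k^{ad})$ are \emph{not} commensurable (their ranks can differ, e.g.\ when the center contains a $\theta_k$-split torus). So the assertion that the $\mathbb{Q}$-duals are ``canonically identified'' fails, and one cannot transport the equality $\CV=C^\vee$ across the isogeny. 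Even when $G_k$ is semisimple but not adjoint, transferring an \emph{equality} of cones requires proving that restriction $\CV(G_k/K_k)\to\CV(G_k^{ad}/K_k^{ad})$ is a bijection compatible with $\varrho$, which is not a triviality.

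The paper sidesteps exactly this difficulty: it uses the general inclusion $\{t\mid t(\overline{\alpha_i})\le 0\}\subset\CV$ from \cite{LV}*{Corollary~5.3}, so it only needs to transfer the inclusion $\CV\subset\{t\mid t(\overline{\alpha_i})\le 0\}$ from the adjoint case. That transfer is genuinely painless: a $G_k$-invariant valuation $v$ restricts to a $G_k^{ad}$-invariant valuation on the subfield $k(G_k^{ad}/K_k^{ad})$, the $\chi_{\overline{\alpha_i}}$ already live in that subfield (as $\overline{\alpha_i}$ lies in the root lattice), and the adjoint case forces $v(\chi_{\overline{\alpha_i}})\le 0$. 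No commensurability of lattices or comparison of duals is needed. To repair your proof, you should either incorporate the cited reverse inclusion from Knop and transfer only the forward inclusion as the paper does, or else supply the nontrivial argument that the valuation cone is invariant under the central quotient $G_k\to G_k^{ad}$ in the full reductive (non-semisimple) setting.
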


\begin{proof}
By \cite{LV}*{Corollary~5.3}, the valuation cone always contains the right hand side of \eqref{eq:va}. It suffices to show that 
\begin{equation}\label{eq:inc}
\CV\subset\{t\in{_\mathbb{Q}\iwt^*}\mid t(\overline{\alpha_i})\le 0,\;\text{ for any }i\in\Ir\}.
\end{equation}

Firstly suppose $G_k$ is of adjoint type. Let $P_k$ be the wonderful compactification of $G_k/K_k$ as in Section \ref{sec:wond}. Since $P_k$ is canonical, the valuation cone $\CV$ is generated by the valuations associated with $G_k$-stable prime divisors of $P_k$. Let $D$ be a $G_k$-stable prime divisor of $P_k$, and $v_D$ be the valuation associated with $D$. Since $P_k$ has no colors, the intersection $D\cap \mathring{P_k}$ is non-empty, hence it is a prime divisor of the affine space $\mathring{P_k}$. By Theorem \ref{thm:smooad}, the functions $\chi_{-\overline{\alpha_i}}$, for $i\in\Ir$, are regular on $\mathring{P_k}$, so we have $v_D(\chi_{-\overline{\alpha_i}})\ge 0$. Hence \eqref{eq:inc} is proved.

Next suppose that $G_k$ is a general reductive group, and $Z_k$ is the center of $G_k$. Let $G_{k}^{ad}=G_k/Z_k$. The involution $\theta_k$ induces an involution on $G_{k}^{ad}$. Let $K_{k}^{ad}$ be the fixed point subgroup of $G_{k}^{ad}$. Then any $G_k$-invariant valuation on $k(G_k/K_k)$ induces a $G_{k}^{ad}$-invariant valuation on $k(G_{k}^{ad}/K_{k}^{ad})$. One may identify $k(G_{k}^{ad}/K_{k}^{ad})$ as a subfield of $k(G_k/K_k)$, and it is clear that $\chi_{\overline{\alpha_i}}$ belongs to $k(G_{k}^{ad}/K_{k}^{ad})$, for $i\in\Ir$. Then the proof follows from the adjoint case.
\end{proof}

\begin{remark}
    Proposition \ref{prop:va} is proved in \cite{Vu90} (see also \cite{LV}*{\S 5}) when the characteristic of $k$ is 0.
\end{remark}

\section{Affine embeddings of symmetric spaces}\label{sec:aff}

\subsection{Semigroups and affine embeddings} We firstly define several properties of subsemigroups of $\breve{X}^+$. A subsemigroup of $\breve{X}^+$ is a subset which is closed under addition. A submonoid is a subsemigroup with zero.

\begin{definition}\label{def:sat}
    A subsemigroup $\mathcal{L}$ of  $\iwtp$ is {\em saturated} if 
\begin{itemize}
\item $\CL$ is finitely generated as a semigroup;
\item $\CL$ generates $\breve{X}$ as an additive group;
 \item if $n\mu\in\mathcal{L}$ for some $n\in\mathbb{Z}_{>0}$ and $\mu\in \iwtp$, then $\mu\in\mathcal{L}$.
\end{itemize}
A subsemigroup $\CL$ of  $\breve{X}^+$ is called {\em closed} if for any $\lambda\in\iwtp$ with $\lambda\leq \mu$ for some $\mu \in \CL$, we have  $\lambda\in\CL$.
\end{definition}

For any subsemigroup $\CL$ of $\breve{X}^+$, we define the $k$-subspace 
\begin{equation}\label{eq:R}
R_k(\mathcal{L})=\bigcup_{\mu\in \mathcal{L}}k[G_k/K_k]_{\leq\mu}.
\end{equation}
If $\mathcal{L}$ is a submonoid, then it is clear that $R_k(\CL)$ is a unital $k$-algebra.  
\begin{lem}
Let $\mathcal{L}\subset\breve{X}^+$ be a closed saturated submonoid. Then $R_k(\mathcal{L})$ is a finitely-generated normal (integrally closed) unital $k$-algebra.
\end{lem}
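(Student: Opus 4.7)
There are three things to verify: that $R_k(\CL)$ is a unital $k$-algebra, is finitely generated, and is normal. Unitality is immediate: $0\in\CL$ gives $k[G_k/K_k]_{\le 0}=k\subseteq R_k(\CL)$, while property (4) of Theorem~\ref{thm:int} combined with closure of $\CL$ under addition gives closure of $R_k(\CL)$ under multiplication.

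For finite generation, take monoid generators $\mu_1,\dots,\mu_s$ of $\CL$ (finite, by saturation) and consider the subalgebra $A\subseteq R_k(\CL)$ generated by the finite-dimensional $G_k$-submodules $V_i:=k[G_k/K_k]_{\le\mu_i}$. Each $V_i$ is finite-dimensional: Theorem~\ref{thm:int}(3) provides a finite filtration by dual Weyl modules indexed by the finite set $\{\lambda\in X^+\mid\lambda\le\mu_i\}$. To prove $A=R_k(\CL)$, I pass to the associated graded for the filtration $\{k[G_k/K_k]_{\le\mu}\}$. By property (3) and closedness of $\CL$, the associated graded of $R_k(\CL)$ is $\bigoplus_{\mu\in\CL}H^0(\mu)$ (with $H^0(\mu)$ the dual Weyl module of highest weight $\mu$, and multiplication given by Cartan product). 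The Cartan-product map $H^0(\mu_{i_1})\otimes\cdots\otimes H^0(\mu_{i_m})\twoheadrightarrow H^0(\sum\mu_{i_j})$ is surjective (a standard consequence of Mathieu's theorem on good filtrations of tensor products), so the same identification holds for the associated graded of $A$. Since $A$ and $R_k(\CL)$ are filtered subspaces of $k[G_k/K_k]$ with the same associated graded and the filtration is well-founded on each element (the set $\{\lambda\le\mu\}$ is finite), induction on filtration degree yields $A=R_k(\CL)$.

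For normality, I first check that $\operatorname{Frac}(R_k(\CL))=k(G_k/K_k)$: any $f\in k[G_k/K_k]_{\le\mu}$ can be written as $(f\chi_{\mu''})/\chi_{\mu''}$ after decomposing $\mu=\mu'-\mu''$ with $\mu',\mu''\in\CL$, which is possible because $\CL$ generates $\iwt$ as an additive group. Since $G_k/K_k$ is smooth in characteristic $\ne 2$, the ring $k[G_k/K_k]$ is normal, so any $a\in k(G_k/K_k)$ integral over $R_k(\CL)$ automatically lies in $k[G_k/K_k]$. It remains to show $a\in R_k(\CL)$. Choose $\mu\in\iwtp$ so that the image $\bar a$ of $a$ in $k[G_k/K_k]_{\le\mu}/k[G_k/K_k]_{<\mu}\cong H^0(\mu)$ is nonzero. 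From a monic integral relation $a^n+c_{n-1}a^{n-1}+\cdots+c_0=0$ with $c_j\in k[G_k/K_k]_{\le\gamma_j}$, $\gamma_j\in\CL$, pass to the associated graded (a domain, as discussed below) and read off degrees: the nonzero class $\bar a^n\in H^0(n\mu)$ must be cancelled by the image of some $c_ja^j$, forcing $\gamma_j\ge(n-j)\mu$ for some $j<n$. Closedness of $\CL$ then gives $(n-j)\mu\in\CL$, and saturation yields $\mu\in\CL$; hence $a\in R_k(\CL)$.

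The main obstacle is the leading-term step in the normality argument, which relies on two inputs from the good-filtration property of Theorem~\ref{thm:int}: first, the associated graded of $k[G_k/K_k]$ is identified as a direct sum of dual Weyl modules with Cartan multiplication, so that ``leading terms'' are well-defined; and second, this associated graded is a domain, so that $\bar a^n\ne 0$. The three conditions on $\CL$ each play a distinct role at the end of this argument: generation of $\iwt$ identifies the fraction field; closedness allows descent from $(n-j)\mu\le\gamma_j\in\CL$ to $(n-j)\mu\in\CL$; and saturation allows descent from $(n-j)\mu\in\CL$ to $\mu\in\CL$.
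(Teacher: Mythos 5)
Your overall strategy is genuinely different from the paper's. The paper disposes of the lemma in a few lines by citing Grosshans's transfer theorem (\cite{Gross}*{Theorem~9}): since $R_k(\CL)$ inherits a good filtration from $k[G_k/K_k]$, finite generation and normality may be checked on the $U_k$-invariants $R_k(\CL)^{U_k}\cong k[\CL]$, where they follow from finite generation and saturation of $\CL$ by standard toric geometry. You instead reconstruct the relevant part of that machinery by hand via the associated graded $\bigoplus_{\mu}H^0(\mu)$ with Cartan multiplication. Your finite-generation argument is essentially sound: the surjectivity of the Cartan product in arbitrary characteristic is a real theorem (Ramanan--Ramanathan/Mathieu), but you invoke it correctly, and the induction over the finite poset $\{\lambda\in\iwtp\mid\lambda\le\mu\}$ closes up because closedness of $\CL$ keeps all lower terms inside $R_k(\CL)$.

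The normality argument, however, has a gap at the step ``choose $\mu\in\iwtp$ so that the image $\bar a$ of $a$ in $k[G_k/K_k]_{\le\mu}/k[G_k/K_k]_{<\mu}$ is nonzero.'' The dominance order on $\iwtp$ is neither total nor even directed (two weights in different cosets of the root lattice admit no common upper bound), so a general element of $k[G_k/K_k]$ --- in particular a general element of the integral closure of $R_k(\CL)$ --- need not lie in any single $k[G_k/K_k]_{\le\mu}$ with nonzero image in the top quotient: its expansion in the basis $B(G/K)$ may involve several pairwise incomparable maximal degrees. For such an $a$ there is no well-defined leading term in a single $H^0(\mu)$, and the degree bookkeeping in the integral dependence relation (that $\bar a^{\,n}$ must be cancelled by some $c_j a^j$, hence $(n-j)\mu\le\gamma_j$) does not get off the ground. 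The standard repair is exactly the paper's route: restrict to $U_k$-invariants, where the only $B_k$-eigenfunctions are the single-degree elements $\chi_\nu$ and your leading-term computation works verbatim to give $\nu\in\CL$, and then use the good filtration (exactness of $(-)^{U_k}$, i.e.\ Grosshans) to pass from $(R')^{U_k}=R_k(\CL)^{U_k}$ to $R'=R_k(\CL)$ for the integral closure $R'$. Alternatively, one can refine $\le$ to an $\mathbb{N}$-grading before taking symbols. As written, though, your normality proof does not cover all integral elements.
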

\begin{proof}
It is clear that $R_k(\mathcal{L})$ inherits the good filtration from $k[G_k/K_k]$ in Theorem \ref{thm:int}. By \cite{Gross}*{Theorem 9}, it suffices to show that the $k$-algebra $R_k(\mathcal{L})^{U_k}$ of $U_k$-invariants is finitely generated and normal. 

     Let $k[\mathcal{L}]$ be the monoid algebra of $\mathcal{L}$. Then \[
    R_k(\mathcal{L})^{U_k}=\bigcup_{\mu\in\mathcal{L}}k[G_k/K_k]_{\le\mu}^{U_k}\cong k[\mathcal{L}].
    \]
    Here the last isomorphism follows from the closeness of $\CL$. The algebra $k[\mathcal{L}]$ is integrally closed since $\CL$ is saturated (cf. \cite{CLS}*{Theorem~1.3.5}). Since $\CL$ is finitely generated by assumption, this shows $k[\mathcal{L}]$ is finitely generated.
 \end{proof}   
    
    We then define the normal affine $G_k$-variety
\begin{equation*}
V_k(\mathcal{L})=Spec\,R_k(\mathcal{L}).
\end{equation*}
 The rest of this subsection is devoted to prove the following theorem. 
\begin{thm}\label{thm:semigroup}
    The map $\mathcal{L}\mapsto V_k(\mathcal{L})$ is a bijection between the set of closed saturated submonoids of $\iwtp$ with the set of affine embeddings of $G_k/K_k$ (up to isomorphism).
\end{thm}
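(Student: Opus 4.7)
The plan is to construct an explicit inverse to $\mathcal{L}\mapsto V_k(\mathcal{L})$ using the $U_k$-invariants functor and the good filtration on $k[G_k/K_k]$ from Theorem~\ref{thm:int}. First I verify that $V_k(\mathcal{L})$ is an affine embedding. The preceding lemma gives that $R_k(\mathcal{L})$ is a finitely generated normal $k$-algebra, so $V_k(\mathcal{L})$ is a normal affine $G_k$-variety. For openness of $G_k/K_k\hookrightarrow V_k(\mathcal{L})$, I would use that $\mathcal{L}$ generates $\iwt$ as a group: for any $\mu\in\iwtp$ there exists $\nu\in\mathcal{L}$ with $\mu+\nu\in\mathcal{L}$, and for any $f\in k[G_k/K_k]_{\le\mu}$ one has $\chi_\nu f\in k[G_k/K_k]_{\le\mu+\nu}\subset R_k(\mathcal{L})$ by the multiplicativity of the good filtration in Theorem~\ref{thm:int}(4). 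This yields $k[G_k/K_k]\subset\mathrm{Frac}\,R_k(\mathcal{L})$, and a standard localization argument realizes $k[G_k/K_k]$ as a localization of $R_k(\mathcal{L})$, producing the open immersion.

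For the inverse direction, given an affine embedding $V_k$ of $G_k/K_k$, I set
$$\mathcal{L}(V_k):=\{\mu\in\iwtp : \chi_\mu\in k[V_k]\}.$$
By Lemma~\ref{le:Uk}, $k[G_k/K_k]^{U_k}$ is the monoid algebra of $\iwtp$, so $k[V_k]^{U_k}=k[\mathcal{L}(V_k)]$. Standard arguments then give that $\mathcal{L}(V_k)$ is finitely generated (from finite generation of $k[V_k]^{U_k}$ via reductivity of $G_k$), saturated (from $\chi_\mu^n=\chi_{n\mu}$ combined with normality of $k[V_k]$), and generates $\iwt$ as a group (by Lemma~\ref{le:Lambda} applied to $\mathrm{Frac}\,k[V_k]=k(G_k/K_k)$).

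The main obstacle is to prove closedness of $\mathcal{L}(V_k)$ together with the identity $k[V_k]=R_k(\mathcal{L}(V_k))$. My strategy is to establish that the induced filtration $F_\mu k[V_k]:=k[V_k]\cap k[G_k/K_k]_{\le\mu}$ is a good filtration in the strict sense that each subquotient is either $0$ or the full dual Weyl module $H^0(\mu)$. The key input is normality of $k[V_k]$, which together with Theorem~\ref{thm:int} should exclude the possibility of a proper intermediate $G_k$-submodule between the socle $L(\mu)$ and $H^0(\mu)$ appearing as a subquotient. Granted this, exactness of $U_k$-invariants on good-filtered modules implies $k[V_k]$ is uniquely determined by $k[V_k]^{U_k}$, giving $k[V_k]=R_k(\mathcal{L}(V_k))$. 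Closedness of $\mathcal{L}(V_k)$ follows: if $\lambda\le\mu\in\mathcal{L}(V_k)$, then $k[G_k/K_k]_{\le\mu}\subset k[V_k]$ and $\chi_\lambda\in k[G_k/K_k]_{\le\mu}^{U_k}$ by Lemma~\ref{le:Uk}, so $\chi_\lambda\in k[V_k]$. Injectivity of $\mathcal{L}\mapsto V_k(\mathcal{L})$ follows by recovering $\mathcal{L}=\mathcal{L}(V_k(\mathcal{L}))$ using the closedness assumption.
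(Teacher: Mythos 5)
Your overall structure (construct the inverse via the monoid $\mathcal{L}(V_k)$, verify it is closed and saturated, and show the two assignments are mutually inverse) matches the paper, and the argument for $V_k(\mathcal{L})$ being an affine embedding is essentially the same as Lemma~\ref{lem:emb1}. The serious gap is in the closedness argument. You propose to show first that the induced filtration $F_\mu k[V_k]=k[V_k]\cap k[G_k/K_k]_{\le\mu}$ is a good filtration with subquotients $0$ or $H^0(\mu)$, claiming that ``normality of $k[V_k]$ together with Theorem~\ref{thm:int} should exclude the possibility of a proper intermediate $G_k$-submodule''. This is not established and is precisely the hard point: in positive characteristic, a $G_k$-submodule of a module with good filtration need not have a good filtration, and a subquotient of $H^0(\mu)$ containing the highest weight line need not equal $H^0(\mu)$ (since $H^0(\mu)$ is not generated by its highest weight vector when it is not irreducible). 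Normality of $k[V_k]$ is a ring-theoretic condition, and there is no direct route from it to a representation-theoretic statement about subquotients; the implication ``normal $\Rightarrow$ good filtration'' is, in the paper's logic, a \emph{consequence} of the theorem rather than an input to it (it is derived only after closedness is known and $k[V_k]=R_k(\mathcal{L}(V_k))$ is established).

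The paper instead proves closedness of $\mathcal{L}(V_k)$ by an explicit valuation argument: it combines the description of the valuation cone $\CV$ (Proposition~\ref{prop:va}, which ultimately rests on the local structure theorem for the wonderful compactification of De Concini--Springer), the combinatorial Lemma~\ref{le:min} expressing $2(\mu-\lambda)$ in terms of the $\overline{\alpha_i}$, and Hartshorne's criterion that regularity on a normal variety is detected by the divisorial valuations. This turns closedness into an inequality $v_i(\chi_\lambda)\ge 0$ verified term-by-term, and avoids any assertion about good filtrations on $k[V_k]$. For showing $V_k(\mathcal{L}(V_k))\cong V_k$, the paper uses Grosshans' theorem that $k[V_k]$ is integral over the $G_k$-subalgebra generated by $k[V_k]^{U_k}$, and then compares integral closures; this is again independent of any good-filtration input on $k[V_k]$ itself.

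A secondary issue: your argument that $\mathcal{L}(V_k)$ generates $\iwt$ ``by Lemma~\ref{le:Lambda} applied to $\mathrm{Frac}\,k[V_k]$'' is incomplete. Lemma~\ref{le:Lambda} tells you what the $B_k$-eigenfunctions of the function field are, but an arbitrary $\chi_\mu$ with $\mu\in\iwtp$ is a ratio of elements of $k[V_k]$ that need not themselves be $B_k$-eigenfunctions, so you do not immediately get $\mu$ as a difference of elements of $\mathcal{L}(V_k)$. The paper handles this by taking a $B_k$-eigenfunction $\chi_\gamma$ inside the defining ideal $I(D)$ of the boundary (using Lie--Kolchin) and then clearing denominators by multiplying with a high power $\chi_\gamma^n$.
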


\begin{rem}
Isomorphisms between embeddings $V_k$ and $V_k'$ of $G_k/K_k$ are $G_k$-equivariant isomorphisms $V_k \cong V_k'$ that restrict to the identity map on $G_k/K_k$. 
\end{rem}

\begin{lemma}\label{lem:emb1}
    Let $\mathcal{L}$ be a closed saturated submonoid of $\iwtp$.  Then $V_k(\mathcal{L})$ is an affine embedding of $G_k/K_k$.
\end{lemma}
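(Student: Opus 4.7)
My plan is to show that the ring inclusion $R_k(\mathcal{L}) \hookrightarrow k[G_k/K_k]$ induces a $G_k$-equivariant open immersion $\phi \colon G_k/K_k \hookrightarrow V_k(\mathcal{L})$. The preceding lemma already gives that $V_k(\mathcal{L})$ is a normal affine $G_k$-variety, so the only remaining assertion is that $\phi$ is an open embedding; one has at once that $\phi$ is a dominant $G_k$-equivariant morphism between irreducible $G_k$-varieties.

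The first step is to establish birationality of $\phi$. For any $f \in k[G_k/K_k]$, choose $\mu \in \iwtp$ with $f \in k[G_k/K_k]_{\le \mu}$ (Theorem~\ref{thm:int}); since the saturation hypothesis forces $\mathcal{L}$ to generate $\iwt$ as an abelian group, we may write $\mu = \mu' - \mu''$ with $\mu', \mu'' \in \mathcal{L}$. Lemma~\ref{le:Uk} places $\chi_{\mu''}$ in $k[G_k/K_k]_{\le \mu''}$, and then Theorem~\ref{thm:int}(4) yields
\[
\chi_{\mu''} \cdot f \;\in\; k[G_k/K_k]_{\le \mu'' + \mu} \;=\; k[G_k/K_k]_{\le \mu'} \;\subseteq\; R_k(\mathcal{L}),
\]
so that $f = (\chi_{\mu''} f)/\chi_{\mu''}$ lies in the fraction field of $R_k(\mathcal{L})$, proving birationality.

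Next, because $\phi$ is $G_k$-equivariant with source the homogeneous space $G_k/K_k$, the image $O \coloneqq \phi(G_k/K_k)$ is a single $G_k$-orbit of $V_k(\mathcal{L})$. Dominance forces $O$ to be dense, and since $G_k$-orbits on a variety are always locally closed, $O$ is in fact an open $G_k$-orbit. Letting $K_k' \supseteq K_k$ denote the stabilizer of $\phi(eK_k)$, this produces a factorisation
\[
\phi \colon G_k/K_k \twoheadrightarrow G_k/K_k' \simeq O \hookrightarrow V_k(\mathcal{L}).
\]

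The main obstacle is to conclude that $K_k = K_k'$. Birationality of $\phi$ forces the quotient map $G_k/K_k \to G_k/K_k'$ to be generically injective, so $K_k'/K_k$ is a zero-dimensional group scheme; then $G_k/K_k \to G_k/K_k'$ is a finite birational morphism onto the normal target $O$, and Zariski's main theorem forces it to be an isomorphism. The delicate part is to verify this carefully in positive characteristic, ruling out nontrivial inseparable covers, but the standard formulation of Zariski's main theorem for normal targets handles this uniformly. With this, $\phi$ is identified with the $G_k$-equivariant open immersion $G_k/K_k \simeq O \hookrightarrow V_k(\mathcal{L})$, completing the proof.
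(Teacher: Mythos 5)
Your proposal is correct and takes essentially the same route as the paper: the core step in both is the identical computation $\chi_{\mu''}f\in k[G_k/K_k]_{\le\mu'}\subset R_k(\mathcal{L})$, which shows the fraction field of $R_k(\mathcal{L})$ is all of $k(G_k/K_k)$. The paper simply asserts that this birationality suffices, whereas you spell out the implicit orbit argument and the appeal to Zariski's main theorem; that elaboration is accurate and fills in what the paper leaves to the reader.
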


\begin{proof}
We have already shown $V_k(\mathcal{L})$ is a normal affine variety. The $G_k$-action is the obvious one. We show $V_k(\CL)$ contains $G_k/K_k$ as an open subspace. 

Let $F_k(\mathcal{L})$ be the field of fractions of $R_k(\mathcal{L})$. It suffices to show $F_k(\mathcal{L})=k(G_k/K_k)$, or $F_k(\mathcal{L})$ contains $k[G_k/K_k]$. Take any $f\in k[G_k/K_k]$. Assume that $f$ belongs to $k[G_k/K_k]_{\le\lambda}$, for some $\lambda\in\iwtp$. Since $\CL$ generates $\breve{X}$, we can write $\lambda=\lambda'-\lambda''$, for some $\lambda'$ and $\lambda''$ in $\mathcal{L}$. Recall the element $\chi_{\lambda''}\in k[G_k/K_k]_{\leq\lambda''}^{U_k}$. Then $\chi_{\lambda''}\in R_k(\mathcal{L})$. Then we have $$\chi_{\lambda''}f\in k[G_k/K_k]_{\leq\lambda'}\subset R_k(\mathcal{L}).$$
    Therefore $f$ belongs to $F_k(\mathcal{L})$. We complete the proof.
\end{proof}

We next construct the inverse of the bijection. Let $V_k$ be an affine embedding of $G_k/K_k$, and let $k[V_k]$ be the ring of regular functions on $V_k$. Then $k[V_k]$ is identified with a $G_k$-subalgebra of $k[G_k/K_k]$. We set 
\begin{equation*}
    \mathcal{L}(V_k)=\{\mu\in X\mid k[V_k]^{(\mu)}\neq 0\}.
\end{equation*}
 It is clear that $\mathcal{L}(V_k)$ is a submonoid of $\iwtp$. We also have $V_k^{U_k} \cong k[\CL(V_k)]$.

\begin{lemma}\label{lem:emb2}
   Let $V_k$ be an affine embedding of $G_k/K_k$. The submonoid $\mathcal{L}(V_k)$ is  a  closed and saturated.
\end{lemma}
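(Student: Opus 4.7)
The plan is to verify the four defining conditions—finite generation, generation of $\breve{X}$ as a group, stability under division by positive integers, and closedness—separately, with closedness being the main technical step.

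Finite generation comes essentially for free: $V_k$ is an affine $G_k$-variety of finite type and $U_k$ is the unipotent radical of a Borel in the reductive group $G_k$, so by the Hadziev/Grosshans theorem the $k$-algebra $k[V_k]^{U_k}$ is finitely generated. Since this algebra coincides with the monoid algebra $k[\mathcal{L}(V_k)]$, the monoid $\mathcal{L}(V_k)$ is itself finitely generated. The divisibility condition is equally quick: if $n\mu \in \mathcal{L}(V_k)$ with $n > 0$ and $\mu \in \iwtp$, then $\chi_\mu^n = \chi_{n\mu} \in k[V_k]$, so $\chi_\mu \in k(V_k) = k(G_k/K_k)$ is integral over $k[V_k]$; normality of $V_k$ then forces $\chi_\mu \in k[V_k]$, i.e.\ $\mu \in \mathcal{L}(V_k)$.

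For the generation of $\breve{X}$, I would first establish the general identity $k(Y)^{U_k} = \operatorname{Frac}\bigl(k[Y]^{U_k}\bigr)$ for any irreducible $G_k$-variety $Y$: given $f = a/b \in k(Y)^{U_k}$, the nonzero $U_k$-stable ideal $\{h \in k[Y] : hf \in k[Y]\}$ contains the finite-dimensional $U_k$-submodule spanned by the $U_k$-orbit of $b$, and Lie--Kolchin produces a nonzero $U_k$-fixed element $b_0$ in this subspace; then $f = (fb_0)/b_0$ exhibits $f$ as a quotient of two $U_k$-invariants. Applied to $Y = V_k$ and combined with $k(V_k) = k(G_k/K_k)$ and Lemma~\ref{le:Lambda}, this shows the fraction field of $k[\mathcal{L}(V_k)]$ equals $k(\breve{X})$, whence $\mathcal{L}(V_k)$ generates $\breve{X}$ additively.

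The main obstacle is the closedness condition, which is the step where the specific geometry of the symmetric space enters. Given $\mu \in \mathcal{L}(V_k)$ and $\lambda \in \iwtp$ with $\lambda \le \mu$, I would show $\chi_\lambda \in k[V_k]$ by normality of $V_k$ and the valuative criterion, checking $v_D(\chi_\lambda) \ge 0$ at each prime divisor $D$ of $V_k$. Divisors meeting the open subvariety $G_k/K_k$ contribute no poles to $\chi_\lambda$, since $\chi_\lambda$ is regular on $G_k/K_k$. For a $G_k$-stable prime divisor $D \subset V_k \setminus G_k/K_k$, the valuation $v_D$ is $G_k$-invariant, so Proposition~\ref{prop:va} yields $v_D(\overline{\alpha_i}) \le 0$ for all $i \in \Ir$; Lemma~\ref{le:min} writes $2(\mu-\lambda) = \sum_{i \in \Ir} c_i \overline{\alpha_i}$ with $c_i \ge 0$, so
\[
v_D(\chi_\lambda) = v_D(\chi_\mu) + v_D(\chi_{\lambda-\mu}) \;\ge\; 0 - \tfrac{1}{2}\sum_{i \in \Ir} c_i\, v_D(\overline{\alpha_i}) \;\ge\; 0.
\]
The real depth of this step is concentrated in Proposition~\ref{prop:va}, whose proof ultimately rests on the local structure theorem for the wonderful compactification.
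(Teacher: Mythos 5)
Your proof is correct, and its core step --- closedness --- is exactly the paper's argument: reduce to the $G_k$-stable boundary divisors via normality, then combine $v_D(\chi_\mu)\ge 0$ with Proposition~\ref{prop:va} and Lemma~\ref{le:min}. The differences are in how you verify the saturation conditions. For the divisibility condition you argue directly that $\chi_\mu$ is a root of $x^n-\chi_{n\mu}$ and hence integral over the normal ring $k[V_k]$, whereas the paper routes everything through the statement that $k[V_k]^{U_k}\cong k[\mathcal{L}(V_k)]$ is finitely generated and normal (Grosshans) and then quotes the toric fact that normal affine monoid algebras correspond to saturated monoids; the two are equivalent, and yours is more self-contained. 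The genuinely different route is for the generation of $\breve{X}$: the paper finds a $B_k$-eigenfunction $\chi_\gamma$ in the ideal of the boundary via Lie--Kolchin and then clears poles by multiplying by a high power of $\chi_\gamma$, producing an explicit decomposition $\mu=(\mu+n\gamma)-n\gamma$; you instead invoke the Rosenlicht-type identity $k(Y)^{U_k}=\operatorname{Frac}\bigl(k[Y]^{U_k}\bigr)$ together with Lemma~\ref{le:Lambda} to see that every $\chi_\mu$, $\mu\in\breve{X}$, is a ratio of elements of $k[\mathcal{L}(V_k)]$. Both arguments ultimately rest on Lie--Kolchin applied to a $U_k$-stable ideal; yours is slightly more abstract and, at the last step, tacitly uses the multiplicity-freeness of $k[V_k]^{U_k}$ (or the coset decomposition of $k[\breve{X}]$ over $k[\mathbb{Z}\mathcal{L}(V_k)]$) to pass from ``$\chi_\mu$ lies in the fraction field of the monoid algebra'' to ``$\mu$ lies in the group generated by $\mathcal{L}(V_k)$''; it would be worth spelling that one line out. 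The paper's version has the small bonus of exhibiting the decomposition $\mu=\mu'-\mu''$ explicitly, which is reused implicitly elsewhere.
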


\begin{proof}
We assume $V_k \neq G_k/K_k$, otherwise the claim follows by Theorem~\ref{thm:int}. By the multiplicity-free property, we know that $k[V_k]^{U_k}$ is isomorphic to the monoid algebra $k[\mathcal{L}(V_k)]$. Since $k[V_k]$ is finitely generated and normal, we deduce that $k[V_k]^{U_k}\cong k[\mathcal{L}(V_k)]$ is also finitely generated and normal (cf. \cite{Gross}*{Theorem~9} and \cite{Gross}*{proof of Theorem~17}). Hence $\mathcal{L}(V_k)$ is also saturated (cf. \cite{CLS}*{Theorem~1.3.5}).

We next show that $\mathcal{L}(V_k)$ is closed. Take $\lambda\in\iwtp$ and $\mu\in\mathcal{L}(V_k)$, with $\lambda\leq\mu$. Recall the eigenfunctions $\chi_\lambda$ and $\chi_\mu$ in $k[G_k/K_k]$ from Section \ref{sec:BE}. Let $D=V_k-G_k/K_k$ be the closed subvariety. Let $D_1,D_2,\dots,D_s$ be the irreducible components of $D$ with codimension one, and let $v_i$ be the $G_k$-invariant valuation corresponding to $D_i$, for various $i$. Recall $V_k$ is normal. Then by \cite{Hart}*{Proposition II.6.3 A}, for $f\in k[G_k/K_k]$, one has $f\in k[V_k]$ if and only if $v_i(f)\ge 0$, for all $i$. In particular we have $v_i(\chi_\mu)\ge 0$ for all $i$. By Proposition \ref{prop:va}, we have $v_i(\chi_{\overline{\alpha_j}})\leq 0$, for $1\leq i\leq s$ and $j\in \Ir$. Since $\lambda\leq\mu$, by Lemma \ref{le:min} we have
\[
2(\mu-\lambda)=\sum_{j\in\Ir}n_j\overline{\alpha_j}, \text{\qquad where $n_j\in\mathbb{Z}_{\ge 0}$.}
\]
Therefore we have
\[
2v_i(\chi_\lambda)=v_i(\chi_{2\lambda})=v_i(\chi_{2\mu})-\sum_{j\in\Ir}n_jv_i(\chi_{\overline{\alpha_j}})\geq v_i(\chi_{2\mu})\geq 0.
\]
Hence we deduce that $v_i(\chi_\lambda)\ge 0$, and therefore $\chi_\lambda\in k[V_k]$. This implies that $\lambda$ belongs to $\mathcal{L}(V_k)$. We conclude that $\mathcal{L}(V_k)$ is closed.

It remains to show that $\mathcal{L}(V_k)$ generates $\breve{X}$. Thanks to Lemma \ref{le:dec}, it suffices to show that for $\mu\in\iwtp$, there are $\mu'$ and $\mu''$ in $\mathcal{L}(V_k)$, such that $\mu=\mu'-\mu''$. Let $I(D)\subset k[V_k]$ be the defining ideal of the closed subset $D$. Since $I(D) \subset k[V_k]$ is a locally finite $G_k$-module, it must contain non-zero $B_k$-eigenfunctions by Lie-Kolchin theorem. Suppose $\chi_\gamma$ belongs to $I(D)$, where $\gamma\in \mathcal{L}(V_k)$. Then $v_i(\chi_\gamma)>0$, for $1\le i\le s$. Take $n\in\mathbb{N}$, such that $v_i(\chi_\mu)+nv_i(\chi_\gamma)\ge 0$, for all $i$. Then $\chi_\mu\chi_{\gamma}^n=\chi_{\mu+n\gamma}$ is regular on $V_k$. Therefore $\mu=(\mu+n\gamma)-n\gamma$ is a desired decomposition. We complete the proof.
\end{proof}

Finally, we show that the maps $\mathcal{L}\mapsto V_k(\mathcal{L})$ and $V_k\mapsto\mathcal{L}(V_k)$ are mutually inverse to each other. 
\begin{lemma} 
    \begin{enumerate}
    \item Let $\mathcal{L}$ be a closed saturated submonoid of $\iwtp$. Then $\mathcal{L}=\mathcal{L}(V_k(\mathcal{L}))$.
    \item     Let $V_k$ be an affine embedding of $G_k/K_k$. We have $V_k(\mathcal{L}(V_k)) \cong V_k$.
    \end{enumerate}
\end{lemma}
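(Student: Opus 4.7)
The plan is to handle the two parts separately.

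For (1), I would compute the $U_k$-invariants of $V_k(\mathcal{L})$ directly. Since $V_k(\mathcal{L})^{U_k} = R_k(\mathcal{L})^{U_k} = \bigcup_{\mu\in\mathcal{L}} k[G_k/K_k]^{U_k}_{\leq\mu}$, Lemma~\ref{le:Uk} identifies this with the span of $\chi_\nu$ for those $\nu\in\iwtp$ satisfying $\nu\leq\mu$ for some $\mu\in\mathcal{L}$. The closure hypothesis on $\mathcal{L}$ collapses this to $\{\chi_\nu : \nu\in\mathcal{L}\}$, so reading off the $B_k$-weights gives $\mathcal{L}(V_k(\mathcal{L}))=\mathcal{L}$.

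For (2), the goal is to show $k[V_k]=R_k(\mathcal{L}(V_k))$ as $G_k$-subalgebras of $k[G_k/K_k]$. For the inclusion $k[V_k]\subseteq R_k(\mathcal{L}(V_k))$, I would take $f\in k[V_k]$, form the finite-dimensional $G_k$-submodule $M_f := G_k\cdot f\subseteq k[V_k]$, and choose $\nu\in\iwtp$ minimal with $M_f\subseteq k[G_k/K_k]_{\leq\nu}$. The image of $M_f$ in $\nabla(\nu)=k[G_k/K_k]_{\leq\nu}/k[G_k/K_k]_{<\nu}$ is a nonzero $G_k$-submodule; by Lie--Kolchin it has a nonzero $U_k$-invariant, necessarily a scalar multiple of the class of $\chi_\nu$ (since $\nabla(\nu)^{U_k}$ is spanned by that class). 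Lifting this to a $U_k$-invariant of $k[V_k]_{\leq\nu}$ and using the multiplicity-free description $k[V_k]^{U_k}=k[\mathcal{L}(V_k)]$ from Lemma~\ref{lem:emb2} forces $\nu\in\mathcal{L}(V_k)$, and hence $f\in M_f\subseteq k[G_k/K_k]_{\leq\nu}\subseteq R_k(\mathcal{L}(V_k))$.

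For the reverse inclusion $R_k(\mathcal{L}(V_k))\subseteq k[V_k]$, I would use the valuative characterization coming from normality of $V_k$ (cf.~\cite{Hart}*{Prop.~II.6.3 A}): $k[V_k]=\{f\in k[G_k/K_k] : v_i(f)\geq 0 \text{ for all }i\}$, where the $v_i$ are the $G_k$-invariant valuations of the $G_k$-stable codimension-one components of $V_k\setminus G_k/K_k$. For $f\in k[G_k/K_k]_{\leq\mu}$ with $\mu\in\mathcal{L}(V_k)$, $T_k$-invariance of each $v_i$ yields $v_i(f)=\min_\lambda v_i(f_\lambda)$ via the $T_k$-weight decomposition $f=\sum_\lambda f_\lambda$. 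The standard fact from the Luna--Vust theory of spherical embeddings then bounds $v_i$ on the finite-dimensional $G_k$-submodule $k[G_k/K_k]_{\leq\mu}$ below by its values on the $B_k$-eigenfunctions $\chi_\nu$ with $\nu\in\iwtp$ and $\nu\leq\mu$; since $\mathcal{L}(V_k)$ is closed, each such $\nu$ lies in $\mathcal{L}(V_k)$, whence $\chi_\nu\in k[V_k]$ and $v_i(\chi_\nu)\geq 0$ follows exactly as in the computation in Lemma~\ref{lem:emb2}, giving $v_i(f)\geq 0$ and $f\in k[V_k]$.

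The main obstacle is the reduction from general $f$ to $B_k$-eigenfunctions in both directions: in the first inclusion one must pass through the quotient $\nabla(\nu)$ via Lie--Kolchin and the multiplicity-freeness of $U_k$-invariants, and in the second one invokes the Luna--Vust valuative bound. Both require care in positive characteristic given that the dual Weyl modules $\nabla(\nu)$ need not be simple, but they are standard tools in the theory of spherical embeddings.
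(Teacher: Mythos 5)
Part (1) of your argument is the same as the paper's: compute $U_k$-invariants, apply Lemma~\ref{le:Uk}, and use closedness of $\CL$. For part (2), however, your first inclusion $k[V_k]\subseteq R_k(\CL(V_k))$ has a genuine gap in positive characteristic, precisely at the step you flag but do not resolve: ``lifting'' the $U_k$-invariant from the image of $M_f$ in $\nabla(\nu)$ back to a $U_k$-invariant of $k[V_k]\cap k[G_k/K_k]_{\le\nu}$. The map $M_f^{U_k}\to(\mathrm{im}\,M_f)^{U_k}$ is surjective only if $H^1(U_k,\,M_f\cap k[G_k/K_k]_{<\nu})$ does not obstruct it, and that kernel is a submodule of $k[V_k]$, whose good-filtration properties are not available at this stage (they are a \emph{consequence} of the identification $k[V_k]=R_k(\CL(V_k))$ being proved). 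The Frobenius trick of Lemma~\ref{lem:Ukp} does not apply either, since there one lifts eigenfunctions along a quotient by a prime ideal, not along a submodule. A secondary issue: since $\le$ is only a partial order on $\iwtp$ and distinct classes mod the root lattice have no common upper bound, a single minimal $\nu$ with $M_f\subseteq k[G_k/K_k]_{\le\nu}$ need not exist; one must first split $f$ along the $\iwt/\BZ[\overline{\alpha_i}]$-grading and then induct over the (possibly several) maximal weights.

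The paper sidesteps all of this with Grosshans' integrality theorem \cite{Gross}*{Theorem 5}: letting $A$ be the $G_k$-subalgebra generated by $\{\chi_\mu\mid\mu\in\CL(V_k)\}$, one has $A\subseteq k[V_k]\subseteq\overline{A}$ and $A\subseteq R_k(\CL(V_k))\subseteq\overline{A}$, and since both $k[V_k]$ and $R_k(\CL(V_k))$ are integrally closed with fraction field $k(G_k/K_k)$, both equal $\overline{A}$. This handles the two inclusions simultaneously and is characteristic-free. Your second inclusion (the valuative criterion combined with closedness of $\CL(V_k)$ and the Luna--Vust bound of a $G_k$-invariant valuation on a finite-dimensional module by its values on $B_k$-eigenvectors) is essentially sound and mirrors the computation already carried out in Lemma~\ref{lem:emb2}, but the first inclusion needs to be replaced by an argument of the Grosshans type, or the lifting step must be justified.
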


\begin{proof}
We show Part (1). By definition, we have $k[V_k(\mathcal{L})]=\bigcup_{\mu\in\mathcal{L}}k[G_k/K_k]_{\le \mu}$.
Since $\CL$ is closed, the claim follows from Lemma \ref{le:Uk}.

We show Part (2) now. By Lemma \ref{le:Uk} and the definition of $\mathcal{L}(V_k)$, the space $k[V_k]^{U_k}$ is spanned by $\{\chi_\mu\mid \mu\in\mathcal{L}(V_k)\}$. Let $A$ be the smallest $G_k$-invariant subalgebra of $k[G_k/K_k]$, which contains $\chi_\mu$, for all $\mu\in\mathcal{L}(V_k)$. Let $\overline{A}$ be the integral closure of $A$ in $k(G_k/K_k)$. By \cite{Gross}*{Theorem 5}, the algebra $k[V_k]$ is integral over $A$. So we have $$A\subset k[V_k]\subset \overline{A}\subset k(G_k/K_k).$$
In particular $\overline{A}$ is integrally over $k[V_k]$. Since $k[V_k]$ is integrally closed, we deduce that $k[V_k]=\overline{A}$.

On the other hand, by Lemma \ref{le:Uk} and the definition \eqref{eq:R}, we know that the subspace $R_k(\mathcal{L}(V_k))^{U_k}$ is also spanned by $\{\chi_\mu\mid \mu\in \mathcal{L}(V_k)\}$. The algebra $R_k(\mathcal{L}(V_k))$ is integrally closed, thanks to Lemma \ref{lem:emb1}. Therefore by the similar arguments, we have $R_k(\mathcal{L}(V_k))=\overline{A}$. Hence we deduce that $k[V_k]=R_k(\mathcal{L}(V_k))$. This completes the proof.
\end{proof}

We have now proved Theorem~\ref{thm:semigroup}. Let us derive some immediate consequence.

\begin{cor}
Assume $G_k$ is semisimple. Then any affine embedding $V_k$ of $G_k/K_k$ is trivial, that is, $V_k \cong G_k/K_k$.
\end{cor}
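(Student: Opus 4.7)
My plan is to apply Theorem~\ref{thm:semigroup}, reducing the claim to showing that when $G_k$ is semisimple, $\iwtp$ is the only closed saturated submonoid of itself. Given such a submonoid $\CL$ with $V_k=V_k(\CL)$, I would prove $V_k=G_k/K_k$, equivalently $\CL=\iwtp$.

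The key structural input that I would use is the containment $\iwtp\otimes\BQ\subset \BQ_{\ge 0}\{\overline{\alpha_i}\mid i\in\Ir\}$ inside $\iwt\otimes\BQ$. Since the root lattice has finite index in $X$ for semisimple $G_k$, the simple roots span $X\otimes\BQ$, and applying $\mathrm{id}-\theta_X$, the set $\{\overline{\alpha_i}\mid i\in\Ir\}$ spans $\iwt\otimes\BQ$; combined with the linear independence in \S\ref{subsec:spherical}, it forms a $\BQ$-basis. The dominant cone $\iwtp$ coincides with the dominant chamber of the spherical root system of \S\ref{subsec:spherical}---the dominance conditions coming from $\alpha_j^\vee$ with $j\in\I_\bullet$ are automatic on $\iwt$, and those for $j\in\I_\circ$ reduce to one inequality per $\tau$-orbit---and the claimed containment then follows from the standard fact that fundamental weights of any root system are non-negative combinations of its simple roots.

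Now suppose for contradiction that $V_k\ne G_k/K_k$. By normality of $V_k$ and algebraic Hartog's theorem applied to the affine variety $V_k$, the complement $V_k\setminus G_k/K_k$ contains a codimension-one component, giving a non-zero $G_k$-invariant valuation $v_0\in\CV$. By Proposition~\ref{prop:va}, $v_0(\overline{\alpha_i})\le 0$ for all $i\in\Ir$, so by the containment above, $v_0(\chi_\mu)\le 0$ for every $\mu\in\iwtp$. On the other hand, from the proof of Lemma~\ref{lem:emb2}, $v_0(\chi_\mu)\ge 0$ for $\mu\in\CL$. Since $\CL\subset\iwtp$, both inequalities force $v_0(\chi_\mu)=0$ for every $\mu\in\CL$. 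Finally, since $\CL$ generates $\iwt$ as an abelian group by Definition~\ref{def:sat}, $v_0$ vanishes on all of $\iwt$, whence $v_0=0$---a contradiction. The step requiring the most care is verifying the identification of $\iwtp$ with the dominant chamber of the spherical root system; the remaining deductions are direct.
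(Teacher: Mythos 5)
Your proof is correct, but it takes a genuinely different route from the paper's. The paper argues entirely inside the monoid: it picks $\lambda\in\CL$ with $\langle\alpha_i^\vee,\lambda\rangle>0$ for all $i\in\Ir$ (possible because $\CL$ generates $\iwt$), shows via Kac's finite-type criterion for the matrix $(\langle\alpha_j^\vee,\overline{\alpha}_i\rangle)$ that $\lambda$ has strictly positive coordinates in the basis $\{\overline{\alpha}_i\}$, deduces $\mu\le N\lambda$ for every $\mu\in\iwtp$, and concludes $\CL=\iwtp$ from closedness of $\CL$. You instead argue geometrically: a nontrivial boundary forces a codimension-one $G_k$-stable component (normality plus Hartogs), whose invariant valuation $v_0$ is $\le 0$ on $\iwtp$ by Proposition~\ref{prop:va} together with the containment of the dominant chamber in $\sum_i\BQ_{\ge0}\overline{\alpha}_i$, and $\ge 0$ on $\CL$ since the $\chi_\mu$ are regular; as $\CL$ generates $\iwt$, $\varrho(v_0)=0$, contradicting the injectivity of $\varrho$ on $\CV$. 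Both arguments ultimately rest on the same finite-type positivity (non-negativity of the inverse Cartan matrix of the spherical root system — the paper's remark explicitly notes this alternative), and your identification of $\iwtp$ with the spherical dominant chamber checks out: $\langle\alpha_j^\vee,\cdot\rangle$ vanishes on $\iwt$ for $j\in\I_\bullet$, and the conditions for $j$ and $\tau j$ agree. What your route buys is the standard spherical-varieties criterion in disguise ($\CV$ meets the dual of the dominant cone only at $0$), at the cost of invoking more machinery (Proposition~\ref{prop:va}, the injectivity of $\varrho$ on $\CV$ — which you do need to pass from $\varrho(v_0)=0$ to $v_0$ trivial); the paper's route is more elementary and yields the explicit identity $\CL=\iwtp$ directly.
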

\begin{proof}
Let $\CL = \CL(V_k)$. We show $\CL = \breve{X}^+$.  Since $G_k$ is semisimple, the simple roots form a $\BQ$-basis of $X \otimes_\BZ \BQ$. Therefore $\{\overline{\alpha}_i \vert i \in \Ir\}$ forms a $\BQ$-basis of $\breve{X}\otimes_{\BZ} \BQ$. 

We know $\CL$ generates $\breve{X}$. So we can find $\lambda \in \CL$ such that $\langle \alpha_i^{\vee}, \lambda \rangle >0$ for all $i \in \Ir$. We write $\lambda = \sum_{i\in\Ir,c_i \in \BQ} c_i \overline{\alpha}_i$. Let $A = (\langle \alpha_j^\vee, \overline{\alpha}_i \rangle)_{i,j \in \Ir}$.   Then we claim $A$ is a generalized Cartan matrix of finite type in the sense of \cite{Kac}*{Theorem~4.3}. It is clear $A$ is a generalized Cartan matrix. Let $\rho = \sum_{i \in I} a_i \alpha^\vee_i$ be the half sum of positive coroots of $G_k$, and denote $\rho' = \sum_{i \in \Ir} a_i \alpha^\vee_i$. Let $u = (a_i)_{i \in \Ir}$. Then all coordinates of both $u$ and $Au$ are positive. This shows $A$ is of finite type by \cite{Kac}*{Theorem~4.3}.

Now $\langle \alpha_i^{\vee}, \lambda \rangle >0$ for all $i \in \Ir$ implies that $c_i >0$ for all $i$ by \cite{Kac}*{Theorem~4.3}. Then for any $\mu \in \breve{X}^+$, we have $\mu < N \lambda$ for some $N >0$. Since $N \lambda \in \CL$ and $\CL$ is saturated, we have $\mu \in \CL$ as well. This finishes the proof.
\end{proof}
\begin{remark}
Up to a normalization, the matrix $A = (\langle \alpha_j^\vee, \overline{\alpha}_i \rangle)_{i,j \in \Ir}$ is the Cartan matrix of  spherical root system considered in \S\ref{subsec:spherical}. One can indeed use the Cartan matrix of the spherical root system for the proof as well. 
\end{remark}

\subsection{$G_k$-orbits on $V_k(\CL)$} 

Let $\CL$ be a closed saturated submonoid of $\breve{X}^+$ and $V_k(\CL)$ be the associated affine embedding of $G_k/K_k$. Let $\mathfrak{p}$ be a non-zero $G_k$-invariant prime ideals of $k[V_k(\CL)]$. Let $(\emptyset\neq)\CL(\mathfrak{p}) = \{\mu \in \breve{X}^+ \vert \mathfrak{p}^{(\mu)} \neq 0 \}$.
Since $\mathfrak{p}$ is a prime ideal, the set-theoretical difference $\CL \backslash \CL(\mathfrak{p})$ is a subsemigroup of $\CL$. 

\begin{lem}\label{lem:Ukp}
We have a short exact sequence of vector spaces
\[
 0 \rightarrow \mathfrak{p}^{U_k} \rightarrow k[V_k(\CL)]^{U_k} \rightarrow (k[V_k(\CL)]/\mathfrak{p})^{U_k} \rightarrow 0.
\]
\end{lem}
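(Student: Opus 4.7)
Left exactness of the $U_k$-invariants functor is routine and gives the injectivity and exactness at the first two spots. The substantive content is the surjectivity of $k[V_k(\CL)]^{U_k} \twoheadrightarrow (k[V_k(\CL)]/\mathfrak{p})^{U_k}$.

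The plan is to show that $\mathfrak{p}$ inherits a good filtration as a $G_k$-module from the ambient good filtration on $k[V_k(\CL)]$ provided by Theorem~\ref{thm:int}. Granting this, for each $\lambda \in X^+$ one has $\mathrm{Ext}^1_{G_k}(V(\lambda),\mathfrak{p}) = 0$, where $V(\lambda)$ denotes the Weyl module of highest weight $\lambda$. Combined with the identification $M^{(\lambda)} \cong \mathrm{Hom}_{G_k}(V(\lambda), M)$, valid for any rational $G_k$-module $M$ via the universal property of $V(\lambda)$, the long exact sequence for $\mathrm{Ext}^\bullet_{G_k}(V(\lambda),-)$ applied to
\[
0 \to \mathfrak{p} \to k[V_k(\CL)] \to k[V_k(\CL)]/\mathfrak{p} \to 0
\]
yields surjectivity weight-by-weight, hence on the full $U_k$-invariants.

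To establish the good filtration on $\mathfrak{p}$, I would use the induced filtration $\mathfrak{p}_{\le\mu} := \mathfrak{p} \cap k[V_k(\CL)]_{\le\mu}$ and analyse the graded pieces $\mathfrak{p}_{\le\mu}/\mathfrak{p}_{<\mu}$, each of which embeds $G_k$-equivariantly into the dual Weyl module $\nabla(\mu) \cong k[V_k(\CL)]_{\le\mu}/k[V_k(\CL)]_{<\mu}$ (the latter being zero when $\mu \notin \CL$). A direct weight-space computation using Lemma~\ref{le:Uk} gives $(\mathfrak{p}_{\le\mu}/\mathfrak{p}_{<\mu})^{(\mu)} = \mathfrak{p}^{(\mu)}$; since every nonzero $G_k$-submodule of $\nabla(\mu)$ contains the simple socle $L(\mu)$, which has nonzero $(\mu)$-weight space, the graded piece vanishes whenever $\mu \notin \CL(\mathfrak{p})$.

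The main obstacle will be to show $\mathfrak{p}_{\le\mu}/\mathfrak{p}_{<\mu} = \nabla(\mu)$ for $\mu \in \CL(\mathfrak{p})$. In positive characteristic the $G_k$-submodule generated by $\chi_\mu$ alone hits only the socle $L(\mu) \subsetneq \nabla(\mu)$, so one must exploit the full ideal structure of $\mathfrak{p}$: for each decomposition $\mu = \nu + \lambda$ with $\nu \in \CL(\mathfrak{p})$ and $\lambda \in \CL$, the products $k[V_k(\CL)]_{\le\lambda} \cdot (G_k \cdot \chi_\nu)$ lie in $\mathfrak{p}_{\le\mu}$, and via Mathieu's theorem on tensor products of good-filtered modules, together with the multiplicative compatibility $k[V_k(\CL)]_{\le\lambda} \cdot k[V_k(\CL)]_{\le\nu} \subseteq k[V_k(\CL)]_{\le\lambda+\nu}$, one aims to conclude that these products exhaust $\nabla(\mu)$ upon passage to the associated graded. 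This step, rather than the formal cohomological machinery, is where I expect the genuine technical work to lie.
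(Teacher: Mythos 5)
Your formal machinery (left exactness, $\mathrm{Hom}_{G_k}(V(\lambda),-)\cong(-)^{(\lambda)}$, and $\mathrm{Ext}^1_{G_k}(V(\lambda),\mathfrak{p})=0$ once $\mathfrak{p}$ has a good filtration) is correct, but the proof has a genuine gap exactly where you flag it: you never establish that $\mathfrak{p}_{\le\mu}/\mathfrak{p}_{<\mu}$ is all of $\nabla(\mu)$ for $\mu\in\CL(\mathfrak{p})$, and the sketch offered for this step is unlikely to close. The submodule of the top graded piece generated by products $k[V_k(\CL)]_{\le\lambda}\cdot(G_k\cdot\chi_\nu)$ is controlled by the image of a map $\nabla(\lambda)\otimes L(\nu)\to\nabla(\lambda+\nu)$, and in positive characteristic such images need not be surjective; nor is it clear a priori that every element of $\mathfrak{p}$ lies in the ideal generated by the $B_k$-eigenfunctions of $\mathfrak{p}$. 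Note also that the statement you are trying to prove at this step is essentially Corollary~\ref{cor:Lp} ($\mathfrak{p}=R_k(\CL(\mathfrak{p}))$, granting Lemma~\ref{lem:Lp}), which the paper \emph{deduces from} Lemma~\ref{lem:Ukp}; so your route inverts the paper's logical order and leaves the hardest link unproved.

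The paper's actual argument is much more elementary and avoids good filtrations entirely. In characteristic $0$ the surjectivity is immediate from complete reducibility. In characteristic $p$, given a $B_k$-eigenfunction $f$ in the quotient, Knop's theorem (\cite{LV}*{Theorem~1.1}) produces a $B_k$-eigenfunction $F\in k[V_k(\CL)]$ with $F\equiv f^{p^N}$ modulo $\mathfrak{p}$. Because $\CL$ is saturated and $k[V_k(\CL)]$ is multiplicity free, the weight of $F$ is $p^N$ times a weight in $\CL$, so one can extract $H$ with $H^{p^N}=F$ inside $k[V_k(\CL)]$; then $H^{p^N}\equiv f^{p^N}$ in the quotient, and since $\mathfrak{p}$ is prime and we are in characteristic $p$, the Frobenius is injective on the quotient domain, forcing $H\equiv f$. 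If you want to salvage your approach, the cleanest fix is to first prove $\mathfrak{p}=R_k(\CL(\mathfrak{p}))$ by the valuation-theoretic argument of Lemma~\ref{lem:Lp} together with a direct comparison of $U_k$-invariants, but that is precisely what the Frobenius trick is designed to shortcut.
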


\begin{proof}
It suffices to show the surjectivity part. When char $k = 0$, the claim is trivial by the complete reducibility of $k[G_k]$. We assume char $k = p  >0$. 

Let $f \in (k[V_k(\CL)]/\mathfrak{p})^{U_k}$ be an $B_k$-eigenfunction of weight $\chi_f \in {X}^+$. By \cite{LV}*{Theorem~1.1}, we can find a $B_k$-eigenfunction $F \in k[V_k(\CL)]$ such that $F = f^{p^N}$ in $k[V_k(\CL)]/\mathfrak{p}$ for some $N \in \BZ_{>0}$. 

Since $\CL$ is saturated and $k[V_k(\CL)]$ is multiplicity free, we can find $H \in k[V_k(\CL)]$ such that $H^{p^N} = F$. Therefore $H^{p^N} = f^{p^N}$ in $k[V_k(\CL)]/\mathfrak{p}$. Since char $k = p$ and $\mathfrak{p}$ is prime, we conclude that $H = f$. This finishes the proof. 
\end{proof}

\begin{cor}\begin{enumerate}
\item We have $\CL \backslash \CL(\mathfrak{p}) = \{\mu \in \breve{X}^+ \vert (k[V_k(\CL)]/\mathfrak{p})^{(\mu)} \neq 0\}$.
\item We have $(k[V_k(\CL)]/\mathfrak{p})^{U_k} \cong k[\CL\backslash \CL(\mathfrak{p})]$.
\end{enumerate}
\end{cor}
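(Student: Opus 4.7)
The plan is to reduce both parts to a direct computation with weight spaces, using Lemma~\ref{lem:Ukp} as the engine.

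First, I would note that the short exact sequence
\[
 0 \rightarrow \mathfrak{p}^{U_k} \rightarrow k[V_k(\CL)]^{U_k} \rightarrow (k[V_k(\CL)]/\mathfrak{p})^{U_k} \rightarrow 0
\]
from Lemma~\ref{lem:Ukp} is a sequence of semisimple $T_k$-modules (each term is a direct sum of its $B_k$-eigenspaces), so taking the $\mu$-weight component is exact. Recall that $k[V_k(\CL)]^{U_k} \cong k[\CL]$ is multiplicity-free with $k[V_k(\CL)]^{(\mu)} = k\cdot\chi_\mu$ for $\mu \in \CL$ and $0$ otherwise. Since $\mathfrak{p}^{(\mu)} \subset k[V_k(\CL)]^{(\mu)}$, it must be either $0$ or all of $k\chi_\mu$, and by definition of $\CL(\mathfrak{p})$ the latter case occurs exactly when $\mu \in \CL(\mathfrak{p})$, i.e.\ when $\chi_\mu \in \mathfrak{p}$.

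For part (1), taking the $\mu$-weight component of the exact sequence then yields $(k[V_k(\CL)]/\mathfrak{p})^{(\mu)} \cong k[V_k(\CL)]^{(\mu)}/\mathfrak{p}^{(\mu)}$, which is one-dimensional spanned by $\overline{\chi_\mu}$ precisely when $\mu \in \CL \setminus \CL(\mathfrak{p})$ and zero otherwise. Conversely, any $\mu \in \breve{X}^+$ with $(k[V_k(\CL)]/\mathfrak{p})^{(\mu)} \neq 0$ must satisfy $\mu \in \CL$ (lift a non-zero eigenvector via surjectivity of Lemma~\ref{lem:Ukp} to get a non-zero element in $k[V_k(\CL)]^{(\mu)}$), and the lift must lie outside $\mathfrak{p}$, so $\mu \notin \CL(\mathfrak{p})$. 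This establishes the set-theoretic equality.

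For part (2), I would define the map $k[\CL \setminus \CL(\mathfrak{p})] \to (k[V_k(\CL)]/\mathfrak{p})^{U_k}$ sending the monoid basis element $e_\mu$ to $\overline{\chi_\mu}$. This is well-defined and multiplicative because $\chi_\mu \chi_\nu = \chi_{\mu+\nu}$ in $k[G_k/K_k]$ and $\CL \setminus \CL(\mathfrak{p})$ is a submonoid of $\CL$ (it is a subsemigroup as recorded before Lemma~\ref{lem:Ukp}, and it contains $0$ since $1 = \chi_0 \notin \mathfrak{p}$). Injectivity follows because the $\overline{\chi_\mu}$ lie in distinct weight spaces, and surjectivity is immediate from part (1) combined with the weight decomposition $(k[V_k(\CL)]/\mathfrak{p})^{U_k} = \bigoplus_\mu (k[V_k(\CL)]/\mathfrak{p})^{(\mu)}$.

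The only subtlety is part~(1)'s converse direction in positive characteristic, where lifting a $B_k$-eigenfunction $\bar f$ of weight $\mu$ to a $B_k$-eigenfunction $H$ of the same weight $\mu$ (rather than a weight divisible by $p^N$) is needed; but this is exactly what the multiplicity-freeness plus the integral-domain property of $k[V_k(\CL)]$ guarantee in the proof of Lemma~\ref{lem:Ukp}, so there is no real obstacle beyond unpacking that lemma weight by weight.
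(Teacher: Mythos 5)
Your argument is correct and is exactly the intended one: the paper states this corollary without proof as an immediate consequence of Lemma~\ref{lem:Ukp}, and your unpacking — take $\mu$-weight components of that exact sequence, use that $k[V_k(\CL)]^{U_k}\cong k[\CL]$ is multiplicity-free with $\mathfrak{p}^{(\mu)}\neq 0$ precisely for $\mu\in\CL(\mathfrak{p})$, and then match up the monoid algebra via $e_\mu\mapsto\overline{\chi_\mu}$ — is precisely what is being left implicit. The closing remark about positive characteristic is not even needed in your formulation, since $T_k$-equivariance of the surjection already gives surjectivity on each weight space.
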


For any closed subsemigroup $\CJ$ of $\CL$, we define the $k$-subspace $R_k(\CJ)$ of $R_k(\CL)$ by 
\[
R_k(\CJ) = \bigcup_{\mu \in \CJ}k[G_k/K_k]_{\le \mu}.
\]

\begin{lem}\label{lem:RJ}
 If $\CJ$ is a closed prime ideal of $\CL$, then $R_k(\CJ)$ is a prime ideal of $R_k(\CL)$. 
\end{lem}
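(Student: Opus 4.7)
The plan is to verify the ideal property directly and then prove primeness by reducing to the $U_k$-invariants. For the ideal property, if $f \in k[G_k/K_k]_{\le \mu}$ with $\mu \in \CJ$ and $g \in k[G_k/K_k]_{\le \lambda}$ with $\lambda \in \CL$, Theorem~\ref{thm:int}(4) gives $fg \in k[G_k/K_k]_{\le \mu+\lambda}$, and $\mu + \lambda \in \CJ$ since $\CJ$ is an ideal, so $fg \in R_k(\CJ)$.

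For primeness, set $\bar{R} := R_k(\CL)/R_k(\CJ)$; the goal is to show $\bar{R}$ is an integral domain. Each dual canonical basis element $b \in B(G/K)$ lies in exactly one graded piece of the good filtration of $k[G_k/K_k]$ from Theorem~\ref{thm:int}, giving a well-defined associated weight $\mu_b \in \iwtp$ with $b \in k[G_k/K_k]_{\le \mu}$ if and only if $\mu_b \le \mu$. Together with the downward-closedness of $\CJ$ and $\CL$, this yields $\BZ$-bases $\{b \mid \mu_b \in \CJ\}$ of $R_k(\CJ)$ and $\{b \mid \mu_b \in \CL\}$ of $R_k(\CL)$, and shows both inherit good filtrations with dual Weyl module graded pieces indexed by $\CJ$ and $\CL$ respectively. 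The key calculation is $k[G_k/K_k]_{\le \mu} \cap R_k(\CJ) \subseteq k[G_k/K_k]_{<\mu}$ for every $\mu \in \CL \setminus \CJ$, which follows from the basis description: any basis element in the intersection satisfies $\mu_b \le \mu$ and $\mu_b \in \CJ$, and the case $\mu_b = \mu$ is ruled out by $\mu \notin \CJ$. Hence the induced filtration on $\bar{R}$ is good with graded pieces $\nabla(\mu)$ for $\mu \in \CL \setminus \CJ$, each appearing with multiplicity one.

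Since $U_k$-invariants of a good-filtration module detect the multiplicities of its dual Weyl module constituents, $\bar{R}^{U_k}$ is one-dimensional in weight $\mu$ for $\mu \in \CL \setminus \CJ$ and zero otherwise. The images $\bar{\chi}_\mu$ of $\chi_\mu$ in $\bar{R}$ are nonzero for $\mu \in \CL \setminus \CJ$ (by a weight argument: the $\mu$-weight space of $R_k(\CJ)$ vanishes for $\mu \in \iwtp \setminus \CJ$ by downward-closedness), hence span $\bar{R}^{U_k}$, yielding $\bar{R}^{U_k} \cong k[\CL \setminus \CJ]$ as $k$-algebras. Since $\CJ$ is prime, $\CL \setminus \CJ$ is a sub-semigroup of the torsion-free lattice $\iwt$, so $k[\CL \setminus \CJ]$ is a subring of the Laurent polynomial ring $k[\iwt]$ and is therefore a domain.

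To conclude that $\bar{R}$ itself is a domain, I would invoke the general principle that a commutative $G_k$-algebra with good filtration is a domain if and only if its $U_k$-invariants form a domain, established via a Grosshans-type embedding $\bar{R} \hookrightarrow \bar{R}^{U_k} \otimes_k k[G_k/U_k]$ that realizes $\bar{R}$ as a subring of a tensor product of domains. The main obstacle is precisely this last transfer step, which depends on general structural theory for $G_k$-algebras with good filtrations; everything else is a direct manipulation of the dual canonical basis and the closedness/primeness hypotheses on $\CJ$.
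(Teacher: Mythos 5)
Your proposal is correct and follows essentially the same route as the paper: verify the ideal property via Theorem~\ref{thm:int}(4), show the quotient has a good filtration with $U_k$-invariants isomorphic to the monoid algebra $k[\CL\setminus\CJ]$ (a domain, since $\CL\setminus\CJ$ is a subsemigroup of the torsion-free lattice $\iwt$), and transfer primality from the $U_k$-invariants to the whole algebra. The ``general principle'' you flag as the main obstacle is exactly what the paper invokes, namely Grosshans's theorem (\cite{Gross}*{Theorem~12}) together with the exactness of $U_k$-invariants for modules with good filtration.
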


\begin{proof}
 We first show $R_k(\CJ)$ is an ideal. It is obviously closed under addition. Let $g \in R_k(\CL)$ and $f \in R_k(\CJ)$. Then $g \in k[G_k/K_k]_{\le \lambda}$ for some $\lambda \in \CL$ and $f \in k[G_k/K_k]_{\le \mu}$ for some $\mu \in \CL$.  Then we have $gf \in k[G_k/K_k]_{\le \lambda + \mu}$ by Theorem \ref{thm:int} (3). Since $\CJ$ is an ideal of $\CL$, then $\lambda + \mu \in \CJ$. Now we have $gf \in R_k(\CJ)$, hence $R_k(\CJ)$ is ideal. 

It follows by Theorem~\ref{thm:int} that $R_k(\CJ)$ admits a good filtration as a $G_k$-module. Then by Lemma~\ref{lem:Ukp} and \cite{Gross}*{Theorem~12}, we have that $R_k(\CJ)$ is a prime ideal.
\end{proof}

\begin{lem}\label{lem:Lp}
The subsemigroup $\CL(\mathfrak{p})$ is a closed prime ideal of $\CL$.
\end{lem}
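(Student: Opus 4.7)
The plan is to verify in turn that $\CL(\mathfrak{p})$ is an ideal of $\CL$, prime, and closed. The first two follow routinely from the multiplicity-one property $k[V_k(\CL)]^{(\mu)} = k\chi_\mu$, which gives $\mu \in \CL(\mathfrak{p}) \iff \chi_\mu \in \mathfrak{p}$, combined with $\chi_\mu \chi_\nu = \chi_{\mu+\nu}$: for the ideal property, $\mu \in \CL(\mathfrak{p})$ and $\nu \in \CL$ yield $\chi_{\mu+\nu} = \chi_\mu \chi_\nu \in \mathfrak{p}$; and primeness of $\mathfrak{p}$ forces $\chi_\mu$ or $\chi_\nu \in \mathfrak{p}$ whenever their product lies in $\mathfrak{p}$.

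The main step is closedness. Let $\mu \in \CL(\mathfrak{p})$ and $\lambda \in \iwtp$ with $\lambda \le \mu$; by closedness of $\CL$ we have $\lambda \in \CL$, so $\chi_\lambda$ is regular on $V_k(\CL)$ and it remains to prove $\chi_\lambda \in \mathfrak{p}$. The plan is to exhibit a $G_k$-invariant valuation $v$ on $k(G_k/K_k)$ centered at $Z = V(\mathfrak{p})$, meaning $v(f) > 0$ if and only if $f \in \mathfrak{p}$ for $f \in R_k(\CL)$. Granted such a $v$, Proposition~\ref{prop:va} gives $v(\chi_{\overline{\alpha_j}}) \le 0$ for every $j \in \Ir$, and Lemma~\ref{le:min} provides a decomposition $2(\mu - \lambda) = \sum_{j\in\Ir} n_j \overline{\alpha_j}$ with $n_j \ge 0$, so that
\[
v(\chi_{2\lambda}) = v(\chi_{2\mu}) - \sum_{j\in\Ir} n_j v(\chi_{\overline{\alpha_j}}) \ge v(\chi_{2\mu}) > 0,
\]
which yields $v(\chi_\lambda) > 0$ and hence $\chi_\lambda \in \mathfrak{p}$, as desired.

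The main obstacle is the construction of the valuation $v$. I would produce $v$ by $G_k$-equivariant birational geometry: take a $G_k$-equivariant modification $\pi\colon\widetilde{V}\to V_k(\CL)$ (for instance the normalization of the blowup along a $G_k$-invariant ideal supported on $Z$) containing a $G_k$-stable prime divisor $\widetilde{E}$ surjecting onto $Z$, and set $v = v_{\widetilde{E}}$. The $G_k$-stability of $\widetilde{E}$ forces $v$ to be $G_k$-invariant, and by construction the center of $v$ on $V_k(\CL)$ is $Z$. Equivalently, one may invoke the Luna-Vust classification to place $v$ in the relative interior of the face of the colored cone of $V_k(\CL)$ corresponding to $Z$; the existence of a $G_k$-stable prime divisor of $V_k(\CL)$ containing $Z$ — itself a consequence of the orbit poset on a simple spherical embedding — ensures that this face meets the valuation cone $\CV$ non-trivially, guaranteeing a valid choice of $v$.
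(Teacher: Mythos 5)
Your proof is correct and follows essentially the same route as the paper: both reduce closedness to the existence of a $G_k$-invariant valuation $\nu$ on $k(G_k/K_k)$ with $\mathfrak{p}=\{f\in k[V_k(\CL)] \mid \nu(f)>0\}$, and then combine the decomposition of Lemma~\ref{le:min} with Proposition~\ref{prop:va}. The only divergence is in producing $\nu$: the paper invokes Matsumura's existence theorem for a valuation dominating the local ring at $\mathfrak{p}$ together with Knop's averaging lemma to make it $G_k$-invariant, whereas you obtain it from a $G_k$-stable exceptional divisor on an equivariant modification (sound, since $G_k$ is connected and so stabilizes each component of the exceptional locus) -- though your parenthetical claim that $Z$ lies in a $G_k$-stable prime divisor of $V_k(\CL)$ itself is not needed and is not true for general spherical embeddings; the correct Luna--Vust statement is simply that the face of the colored cone corresponding to $Z$ has relative interior meeting $\CV$.
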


\begin{proof}
It is clear that $\CL(\mathfrak{p})$ is a prime ideal of $\CL$. We show it is closed.

Let $\lambda \in  \CL(\mathfrak{p})$ and $\mu \in \breve{X}^+$ with $ \mu \le \lambda$. By Lemma~\ref{le:min}, we write $\lambda- \mu  = \frac{1}{2} \sum_{i\in\Ir}n_i \overline{\alpha_i}$ for $n_i \ge 0$. 
By \cite{Mat}*{Theorem~10.2} and \cite{LV}*{Lemma~1.4}, there exists a $G_k$-invariant valuation $\nu$ on $k(G_k/K_k)$ such that 
\[
\mathfrak{p} = \{f \in k[V_k(\CL)] \vert \nu(f) > 0\}.
\]
Then we have 
\[
\nu(\chi_{\mu}) = \nu(\chi_{\lambda}) - \frac{1}{2} \sum_{i\in\Ir}n_i \overline{\alpha_i} \nu(\chi_{\overline{\alpha_i}}).
\]
Hence $\nu(\chi_{\mu }) > 0$ by Proposition~\ref{prop:va}. This shows $\mu \in \CL(\mathfrak{p})$.
\end{proof}

\begin{cor}\label{cor:Lp}
We have 
$\mathfrak{p} = R_k(\CL(\mathfrak{p}))$.
\end{cor}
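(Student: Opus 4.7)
Plan. I would prove both inclusions $R_k(\CL(\mathfrak{p})) \subset \mathfrak{p}$ and $\mathfrak{p} \subset R_k(\CL(\mathfrak{p}))$ separately. The starting point is to compare $U_k$-invariants on both sides. Since $k[V_k(\CL)]^{(\mu)}$ is one-dimensional for $\mu \in \CL$, the definition of $\CL(\mathfrak{p})$ gives $\mathfrak{p}^{U_k} = \bigoplus_{\mu \in \CL(\mathfrak{p})} k\chi_\mu$. On the other hand, Lemma~\ref{le:Uk} combined with the closedness of $\CL(\mathfrak{p})$ (Lemma~\ref{lem:Lp}) yields $R_k(\CL(\mathfrak{p}))^{U_k} = \bigoplus_{\mu \in \CL(\mathfrak{p})} k\chi_\mu$ as well, so the two agree.

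For $R := R_k(\CL(\mathfrak{p})) \subset \mathfrak{p}$, I would argue by contradiction. Suppose there is $r \in R \setminus \mathfrak{p}$; then $r \in k[G_k/K_k]_{\le\lambda}$ for some $\lambda \in \CL(\mathfrak{p})$. The image $\bar r \in k[V_k(\CL)]/\mathfrak{p}$ is nonzero and generates a $G_k$-submodule containing a $B_k$-eigenvector (apply Lie--Kolchin to any finite-dimensional $G_k$-submodule of this module). By the corollary following Lemma~\ref{lem:Ukp}, the eigenvector must be $c\bar\chi_\mu$ for some $\mu \in \CL \setminus \CL(\mathfrak{p})$ and $c \in k^{\times}$; writing this as $c\bar\chi_\mu = \sum_i c_i g_i \bar r$ and lifting gives $c\chi_\mu - \sum_i c_i g_i r \in \mathfrak{p}$. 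Now $\sum_i c_i g_i r$ lies in the $G_k$-stable subspace $k[G_k/K_k]_{\le\lambda}$, whose $T_k$-weights are all $\le \lambda$ in the root order by the good-filtration structure of Theorem~\ref{thm:int}(3). The closedness of $\CL(\mathfrak{p})$ applied to $\mu \in \breve{X}^+ \setminus \CL(\mathfrak{p})$ forces $\mu \not\le \lambda$, so the $T_k$-weight $\mu$ component of $\sum_i c_i g_i r$ vanishes. Extracting the $T_k$-weight $\mu$ part of $c\chi_\mu - \sum_i c_i g_i r \in \mathfrak{p}$ (using that $\mathfrak{p}$ is $T_k$-stable) then gives $c\chi_\mu \in \mathfrak{p}$, i.e., $\mu \in \CL(\mathfrak{p})$, a contradiction.

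For the reverse inclusion $\mathfrak{p} \subset R$, the containment just proved gives a $G_k$-equivariant surjection $\pi\colon k[V_k(\CL)]/R \twoheadrightarrow k[V_k(\CL)]/\mathfrak{p}$ with kernel $\mathfrak{p}/R$. By Lemma~\ref{lem:Ukp} applied to both prime ideals (recall $R$ is prime by Lemma~\ref{lem:RJ}), the $U_k$-invariants on each side are canonically identified with $k[V_k(\CL)]^{U_k}/\mathfrak{p}^{U_k} \cong k[\CL \setminus \CL(\mathfrak{p})]$, and $\pi$ induces the identity map. Hence $(\mathfrak{p}/R)^{U_k} = 0$. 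Since any nonzero rational $G_k$-module admits a nonzero $B_k$-eigenvector (by Lie--Kolchin on a finite-dimensional submodule), $\mathfrak{p}/R = 0$ and $\mathfrak{p} = R$.

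The main obstacle is the first inclusion: although $\mathfrak{p}$ and $R$ have equal $U_k$-invariants, containment does not follow formally, because in positive characteristic a $G_k$-submodule of a dual Weyl module is not determined by its highest weight vector alone. The delicate step is the $T_k$-weight extraction, where the closedness of $\CL(\mathfrak{p})$ is used precisely to rule out $\mu \le \lambda$ and thereby force the $\mu$-component of $\sum c_i g_i r$ to vanish.
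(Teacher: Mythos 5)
Your proof is correct, and it takes a partly different route from the paper's. The paper argues symmetrically: it sets $A=\mathfrak{p}\cap R_k(\CL(\mathfrak{p}))$, shows that taking $U_k$-invariants is exact on $0\to A\to\mathfrak{p}\to\mathfrak{p}/A\to 0$ (via the Frobenius-power lifting of Lemma~\ref{lem:Ukp}), and deduces from $\mathfrak{p}^{U_k}=R_k(\CL(\mathfrak{p}))^{U_k}$ that $(\mathfrak{p}/A)^{U_k}=0$, hence $\mathfrak{p}/A=0$; the reverse inclusion is obtained by exchanging the roles of $\mathfrak{p}$ and $R_k(\CL(\mathfrak{p}))$. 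Your second inclusion is exactly this mechanism (vanishing of $(\mathfrak{p}/R)^{U_k}$ forces $\mathfrak{p}/R=0$). Your first inclusion is genuinely different: rather than invoking exactness of $(-)^{U_k}$ a second time, you lift the relation $c\bar\chi_\mu=\sum_i c_i g_i\bar r$ and track $T_k$-weights, using that all weights of $k[G_k/K_k]_{\le\lambda}$ are $\le\lambda$ (good filtration, Theorem~\ref{thm:int}(3)) and that closedness of $\CL(\mathfrak{p})$ (Lemma~\ref{lem:Lp}) excludes $\mu\le\lambda$ for $\mu\in\CL\setminus\CL(\mathfrak{p})$, so that the weight-$\mu$ component isolates $c\chi_\mu\in\mathfrak{p}$ and gives the contradiction. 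This avoids the somewhat delicate intersection ideal $A$ and the second application of the Frobenius trick, at the cost of the weight bound on good-filtration modules; both inputs are available in the paper, so the argument goes through. One small point: where you cite Lemma~\ref{lem:RJ} for the primality of $R_k(\CL(\mathfrak{p}))$, you should also cite Lemma~\ref{lem:Lp}, which supplies the hypothesis that $\CL(\mathfrak{p})$ is a closed prime ideal of $\CL$.
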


\begin{proof}

Let $A = \mathfrak{p} \cap  R_k(\CL(\mathfrak{p}))$. Then $A$ is a prime ideal by Lemma~\ref{lem:RJ}. Then we consider the short exact sequence of $G_k$-modules 
\[
 0 \rightarrow A \rightarrow \mathfrak{p} \rightarrow \mathfrak{p}/A  \rightarrow 0.
\]
We claim the $U_k$-invariants are also exact, that is, 
\[
 0 \rightarrow A^{U_k} \rightarrow \mathfrak{p}^{U_k} \rightarrow (\mathfrak{p}/A)^{U_k} \rightarrow 0.
\]

 Let us still assume  char $k = p > 0$, otherwise the claim is trivial. We only need to show the surjectivity. Let $f \in (\mathfrak{p}/A)^{U_k} \subset (R_k(\CL)/A)^{U_k}$ be an $B_k$-eigenfunction of weight $\lambda \in X^+$.  Let $H \in R_k(\CL)$ be the $B_k$-eigenfunction such that $H + A = f$ in $ \mathfrak{p}/A$ constructed in Lemma~\ref{lem:Ukp}. Then we have $H \in \mathfrak{p}$. This shows the exactness. 

Note that $\mathfrak{p}^{U_k} = R_{k}(\CL(\mathfrak{p}))^{U_k}$. Hence $A^{U_k} = \mathfrak{p}^{U_k}$, which implies that $(\mathfrak{p}/A)^{U_k} = 0$. Therefore $\mathfrak{p}/A = 0$ as an algebraic $G_k$-module. This shows $\mathfrak{p} \subset R_{k}(\CL(\mathfrak{p}))$. One can symmetrically obtain the other inclusion. We finish the proof now.
\end{proof}

\begin{cor}
Let $Y$ be a $G_k$-orbit closure of $V_{k}(\CL)$. Then $Y$ is normal whose coordinate ring $k[Y]$ admits a good filtration as a $G_k$-module.
\end{cor}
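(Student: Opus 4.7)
The plan is to translate the geometric statement into an algebraic one via the correspondence between $G_k$-orbit closures and $G_k$-invariant prime ideals, and then leverage the good-filtration/semigroup machinery that has already been developed in the excerpt.

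First, I would identify $Y$ with $V(\mathfrak{p})$ for a $G_k$-invariant prime ideal $\mathfrak{p}\subset k[V_k(\CL)]$; since $Y$ is a $G_k$-orbit closure (hence irreducible), such a $\mathfrak{p}$ exists and $k[Y]=k[V_k(\CL)]/\mathfrak{p}$. Combining Lemma~\ref{lem:Lp} and Corollary~\ref{cor:Lp} gives $\mathfrak{p}=R_k(\CL(\mathfrak{p}))$ with $\CL(\mathfrak{p})$ a closed prime ideal of $\CL$. In particular, the set-theoretic complement $\CL_Y:=\CL\setminus\CL(\mathfrak{p})$ is a submonoid of $\CL$.

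For the good filtration, the filtration $\{k[G_k/K_k]_{\le\mu}\}_{\mu\in\CL}$ on $k[V_k(\CL)]$ from Theorem~\ref{thm:int} restricts to a $G_k$-stable filtration on $\mathfrak{p}$ whose $\mu$-th graded piece is nonzero only for $\mu\in\CL(\mathfrak{p})$ and, when nonzero, is the dual Weyl module of highest weight $\mu$ (by Theorem~\ref{thm:int}(3) together with the exactness of $U_k$-invariants established in Lemma~\ref{lem:Ukp}). Taking the quotient, $k[Y]$ inherits a filtration whose graded pieces are dual Weyl modules indexed by $\mu\in\CL_Y$; this is by definition a good filtration.

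For normality, I would invoke \cite{Gross}*{Theorem 9} once more: since $k[Y]$ is finitely generated and carries a good filtration, it suffices to prove that $k[Y]^{U_k}$ is a finitely generated, integrally closed $k$-algebra. Exactness of the $U_k$-invariant functor on modules with good filtration (Lemma~\ref{lem:Ukp}) together with the corollary following it gives
\[
k[Y]^{U_k} \;\cong\; k[\CL_Y],
\]
the monoid algebra of $\CL_Y$. Finite generation of $k[Y]^{U_k}$ follows from the reductivity of $G_k$ applied to the finitely generated algebra $k[Y]$. For integral closure I would verify that $\CL_Y$ is saturated in the sense of \cite{CLS}*{Theorem~1.3.5}: if $n\mu\in\CL_Y$ for some $\mu\in\iwtp$ and $n>0$, then $n\mu\in\CL$ forces $\mu\in\CL$ by saturatedness of $\CL$, while $n\mu\notin\CL(\mathfrak{p})$ forces $\mu\notin\CL(\mathfrak{p})$ since $\CL(\mathfrak{p})$ is a (prime) ideal of $\CL$; hence $\mu\in\CL_Y$. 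Applying \cite{CLS}*{Theorem~1.3.5} then gives that $k[\CL_Y]$ is normal.

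The main obstacle I expect is the normality step — specifically, matching the ad-hoc notion of saturation used in Definition~\ref{def:sat} (which is phrased relative to the ambient monoid $\iwtp$) with the hypothesis of \cite{CLS}*{Theorem~1.3.5} (saturation inside the group generated by the semigroup). This is essentially the same subtlety one meets in the proof for $V_k(\CL)$ itself, so I would resolve it exactly as there, using Lemma~\ref{le:min} together with Proposition~\ref{prop:va} to control elements of $\iwtp\setminus\iwt$ that could a priori lie in the saturation of $\CL_Y$ without lying in $\CL_Y$.
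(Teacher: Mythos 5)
Your proposal is correct and follows essentially the same route as the paper: identify $k[Y]=R_k(\CL)/R_k(\CL(\mathfrak{p}))$ via Lemma~\ref{lem:Lp} and Corollary~\ref{cor:Lp}, pass the good filtration to the quotient, and reduce normality via Grosshans to normality of $k[Y]^{U_k}\cong k[\CL\setminus\CL(\mathfrak{p})]$, which holds because $\CL$ is saturated and $\CL(\mathfrak{p})$ is a prime ideal of $\CL$ (your explicit saturation check for $\CL_Y$ is exactly the point the paper leaves as ``clear by the theory of toric varieties''). The only slip is in your closing remark: $\iwtp\subset\iwt$, so the set $\iwtp\setminus\iwt$ is empty and no appeal to Lemma~\ref{le:min} or Proposition~\ref{prop:va} is needed there.
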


\begin{proof}
We know $k[Y] = R_{k}(\CL)/R_{k}(\CJ)$ for some closed prime ideal $\CJ$ of $\CL$ by Lemma~\ref{lem:Lp} and Corollary \ref{cor:Lp}. Since $R_{k}(\CJ)$ has a good filtration, the quotient $k[Y]$ has a good filtration. Then by \cite{Gross}*{Theorem~17}, the normality of $k[Y]$ is equivalent to the normality of $k[Y]^{U_k} \cong k[\CL\backslash\CJ]$. 

 Since $\CL$ is saturated and $\CJ$ is an prime ideal, the normality of $k[\CL\backslash\CJ]$ is clear by the theory of toric varieties. 
\end{proof}

We summarize the results in this subsection by the following theorem.

\begin{thm}
\begin{enumerate}\label{thm:CJ}
    \item The map $\mathfrak{p} \rightarrow \CL(\mathfrak{p})$ is an order-preserving bijection between non-zero $G_k$-invariant prime ideals, and closed prime ideals in $\CL$. 

    \item We have an order-reversing bijection between (the closure of) non-zero $G_k$-orbits of $V_k(\CL)$ and closed prime  ideals in $\CL$.  
        \item Let $Y$ be a $G_k$-orbit closure of $V_k(\CL)$. Then its coordinate ring $k[Y]$ admits a good filtration as a $G_k$-module.
    \item Let $Y$ be a $G_k$-orbit closure of $V_k(\CL)$. Then $Y$ is normal. 
\end{enumerate}
\end{thm}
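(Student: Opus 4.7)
The plan is largely one of assembly: the content of Theorem~\ref{thm:CJ} is already packaged in the lemmas and corollaries immediately preceding it, so the work is mainly to record how each piece slots in.

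For part (1), I would check that $\mathfrak{p}\mapsto \CL(\mathfrak{p})$ is the required bijection. Lemma~\ref{lem:Lp} shows the map lands in closed prime ideals of $\CL$, and order preservation is immediate from the definition of $\CL(\mathfrak{p})$. To produce the inverse, I would send a closed prime ideal $\CJ$ of $\CL$ to $R_k(\CJ)$, which is a non-zero $G_k$-invariant prime ideal by Lemma~\ref{lem:RJ}. One composition is exactly Corollary~\ref{cor:Lp}: $R_k(\CL(\mathfrak{p}))=\mathfrak{p}$. For the other composition, I would observe that by Lemma~\ref{le:Uk} and the closedness of $\CJ$, the space $R_k(\CJ)^{U_k}$ is spanned by $\{\chi_\mu \mid \mu\in\CJ\}$, whence $\CL(R_k(\CJ))=\CJ$.

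For part (2), I would combine (1) with the standard order-reversing bijection $Y\mapsto I(Y)$ between closures of non-zero $G_k$-orbits in $V_k(\CL)$ and non-zero $G_k$-invariant prime ideals of $R_k(\CL)$, which holds because $V_k(\CL)$ is an affine $G_k$-variety. Composing yields the claimed order-reversing bijection between orbit closures and closed prime ideals of $\CL$.

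Parts (3) and (4) then follow at once from the corollary stated just above Theorem~\ref{thm:CJ}: for any $G_k$-orbit closure $Y$ one has $k[Y]\cong R_k(\CL)/R_k(\CL(\mathfrak{p}))$ with $\mathfrak{p}=I(Y)$, and this quotient inherits a good filtration from $R_k(\CL)$, while its normality follows from Grosshans' theorem applied to $k[Y]^{U_k}\cong k[\CL\backslash\CL(\mathfrak{p})]$ together with the toric normality statement. The only delicate point is the verification $\CL(R_k(\CJ))=\CJ$ in the surjectivity of (1), but this reduces to inspecting $U_k$-invariants using the multiplicity-one property of Theorem~\ref{thm:int}, so no genuinely new argument is required.
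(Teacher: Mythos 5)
Your proposal is correct and matches the paper's intent exactly: the paper states this theorem as a summary of the immediately preceding results (Lemma~\ref{lem:RJ}, Lemma~\ref{lem:Lp}, Corollary~\ref{cor:Lp}, and the normality/good-filtration corollary), and your assembly — including the $U_k$-invariant check that $\CL(R_k(\CJ))=\CJ$ and the appeal to finiteness of $G_k$-orbits implicit in identifying invariant prime ideals with orbit closures — is precisely the intended argument.
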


\begin{remark}
The normality for $G_k$-orbit closures of $V_k(\CL)$ has been obtain by Tange \cite{Tange} by different methods. 
\end{remark}

\subsection{The integral model and quantization}

We construct the integral model for the affine embedding $V_k(\mathcal{L})$, as well as the integral model for the closures of $G_k$-orbits in $V_k(\mathcal{L})$.

Recall the non-commutative $\mathbb{Z}[q,q^{-1}]$-algebra $\mathbf{O}_q(G/K)$, the commutative ring $\mathbf{O}(G/K)$ and their filtration in Section \ref{sec:int}. 

\begin{definition}\label{def:caq}
    Let $\mathcal{L}$ be a closed saturated submonoid of $\iwtp$ that generates $\breve{X}$. We define
\begin{equation*}
    \mathbf{R}(\mathcal{L})=\bigcup_{\mu\in\mathcal{L}}\mathbf{O}(G/K)_{\leq\mu}, \quad \text{ and }\quad \mathbf{R}_q(\mathcal{L})=\bigcup_{\mu\in\mathcal{L}}\mathbf{O}_q(G/K)_{\leq\mu}.
\end{equation*}
It is clear $\mathbf{R}(\mathcal{L})$ (resp., $\mathbf{R}_q(\mathcal{L}))$ is spanned by a subset of the dual canonical basis of $\mathbf{O}(G/K)$ (resp., $\mathbf{O}_q(G/K)$) as a free $\mathbb{Z}$ (resp., $\mathbb{Z}[q,q^{-1}]$)-module.
\end{definition}

 Let $k$ be an algebraically closed field with characteristic $\neq 2$ as before. Thanks to Theorem \ref{thm:int}, we have the canonical isomorphism of $k$-algebras
\begin{equation}\label{eq:bas}
k\otimes_\mathbb{Z}{\mathbf{R}(\mathcal{L})}\cong\bigcup_{\mu\in\mathcal{L}}k[G_k/K_k]_{\leq\mu}=R_k(\mathcal{L}).
\end{equation}

Define the affine scheme 
\begin{equation*}
    \mathbf{V}(\mathcal{L})=Spec\,\mathbf{R}(\mathcal{L}).
\end{equation*}
For any closed  subsemigroup $\CJ \subset \CL$, we define $\mathbf{R}(\CJ) = \bigcup_{\mu \in \CJ} \mathbf{O}(G/K)_{\le \mu}$. It follows from Lemma~\ref{lem:RJ} (by taking $k =\mathbb{C}$) that $\mathbf{R}(\CJ)$ is a prime ideal of $\mathbf{R}(\CL)$. We write $\mathbf{R}(\CL\backslash\CJ) = \mathbf{R}(\CL)/\mathbf{R}(\CJ)$.  We define the affine scheme  
\[
V(\CL\backslash \CJ) = Spec\, \mathbf{R}(\CL\backslash\CJ).
\]

\begin{thm}\label{thm:VL}Let $k$ be any algebraically closed field such that char $k \neq 2$.
    \begin{enumerate}  
        \item  The geometric fiber of $\mathbf{V}(\CL)$ at $k$ is precisely the affine embedding $V_k(\CL)$ of $G_k/K_k$ associated with the semigroup $\CL$.
        \item The geometric fiber of $\mathbf{V}(\CL\backslash \CJ)$ at $k$ is precisely the closure of the $G_k$-orbit on $V_k(\CL)$ of associated with the submonoid $\CJ$. In particular, the closure of any $G_k$-orbit on $V_k(\CL)$ is defined over $\mathbb{Z}$.
    \end{enumerate}
\end{thm}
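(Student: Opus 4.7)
The plan is to deduce both parts directly from Theorem~\ref{thm:int} together with the structural results of Theorem~\ref{thm:CJ}, using throughout that the dual canonical basis makes every relevant $\BZ$-module free and exactness-compatible with base change along $k \otimes_\BZ (-)$.

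For part~(1), the definition of $\mathbf{R}(\CL)$ together with Theorem~\ref{thm:int}(1)--(3) gives, after tensoring with $k$, exactly the isomorphism $k \otimes_\BZ \mathbf{R}(\CL) \cong R_k(\CL)$ already recorded in~\eqref{eq:bas}: the subset $\mathrm{B}(G/K)_{\le\mu}$ of the dual canonical basis specializes to a $k$-basis of $k[G_k/K_k]_{\le\mu}$ for every $\mu \in \iwtp$, and taking the union over $\mu \in \CL$ on both sides preserves the identification. Applying $Spec$ and the base-change identity $Spec\,k \times_{Spec\,\BZ} Spec\,\mathbf{R}(\CL) \cong Spec(k \otimes_\BZ \mathbf{R}(\CL))$ then identifies the geometric fiber with $V_k(\CL) = Spec\, R_k(\CL)$.

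For part~(2), I would first observe that $\mathbf{R}(\CL)$ and $\mathbf{R}(\CJ)$ are each $\BZ$-spanned by explicit subsets of the dual canonical basis, hence free as $\BZ$-modules, as is the quotient $\mathbf{R}(\CL\backslash\CJ)$. Consequently the short exact sequence
\[
0 \longrightarrow \mathbf{R}(\CJ) \longrightarrow \mathbf{R}(\CL) \longrightarrow \mathbf{R}(\CL\backslash\CJ) \longrightarrow 0
\]
remains exact after applying $k \otimes_\BZ -$. Via part~(1) and its obvious analogue for $\CJ$, the first two terms become $R_k(\CJ) \subset R_k(\CL)$, so $k \otimes_\BZ \mathbf{R}(\CL\backslash\CJ) \cong R_k(\CL)/R_k(\CJ)$. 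By Corollary~\ref{cor:Lp} together with Theorem~\ref{thm:CJ}(1)--(2), the prime ideal $R_k(\CJ)$ is precisely the defining ideal of the $G_k$-orbit closure $Y \subset V_k(\CL)$ attached to $\CJ$; hence the quotient is $k[Y]$ and $Spec(k \otimes_\BZ \mathbf{R}(\CL\backslash\CJ)) = Y$. Since this holds uniformly in $k$, the orbit closure $Y$ is defined over $\BZ$.

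The only delicate point is confirming that $k \otimes_\BZ (-)$ truly preserves the relevant short exact sequence; this is where the freeness of $\mathbf{R}(\CL)$, $\mathbf{R}(\CJ)$, and $\mathbf{R}(\CL\backslash\CJ)$ as $\BZ$-modules---a direct consequence of $\mathrm{B}(G/K)$ being a $\BZ$-basis compatible with the filtration by $\le \mu$---plays the essential role. Beyond that, both assertions are formal consequences of results already established.
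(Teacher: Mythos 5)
Your proposal is correct and follows essentially the same route as the paper: part (1) is the base-change identity \eqref{eq:bas} combined with Theorem~\ref{thm:semigroup}, and part (2) rests on the observation that $\mathbf{R}(\CJ)\hookrightarrow\mathbf{R}(\CL)$ is based (so the quotient is $\BZ$-free and the short exact sequence survives $k\otimes_\BZ-$), after which Corollary~\ref{cor:Lp} and Theorem~\ref{thm:CJ} identify the specialized quotient with the coordinate ring of the orbit closure. No gaps.
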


\begin{proof}
    Part (1) follows from Theorem~\ref{thm:semigroup} and \eqref{eq:bas}. We show Part (2). It follows by Theorem~\ref{thm:int} (see more details in \cite{BS}) the embedding $\mathbf{R}(\CJ) \rightarrow \mathbf{R}(\CL)$ is based. In particular, we have the following short exact sequence after base change 
    \[
    0 \rightarrow \mathbf{R}(\CJ) \otimes_\BZ k \rightarrow \mathbf{R}(\CL) \otimes_\BZ k \rightarrow \mathbf{R}(\CL\backslash\CJ) \otimes_\BZ k \rightarrow 0. 
    \]

    Then by Theorem~\ref{thm:int}, the short exact sequence becomes 
    \[
        0 \rightarrow R_k(\CJ)  \rightarrow R_k(\CL)  \rightarrow R_k(\CL\backslash\CJ)  \rightarrow 0.
    \]
    Now Part (2) follows from Theorem~\ref{thm:CJ}. 
\end{proof}

     It follows from Theorem~\ref{thm:int} that $\mathrm{B}(\mathcal{L)}$ specializes to a basis of $R_k(\CL)$, which we still denote by $\mathrm{B}(\mathcal{L)}$.
\begin{defi}\label{defi:dualCBL}
We call $\mathrm{B}(\mathcal{L)}$ the dual canonical basis of the embedding $V_k(\CL)$.
\end{defi}

\begin{remark}\label{remark:groupcase}
We can consider $G_k$ as a symmetric space $(G_k \times G_k )/ \Delta(G_k)$. Here $\Delta(G_k)$ denotes the diagonal embedding of $G_k$. Affine embeddings of $G_k$ are precisely reductive monoids over $G_k$ by \cite{Rit}*{Proposition~1\&Lemma~2}. So all results in this section apply. In particular, we recover various results in \cite{Vin} (for characteristic $0$) and in \cite{Rit} (for arbitrary characteristic). 
\end{remark}

\subsection{Abelianzation}\label{sec:ab}

Let $G_{k,0}$ be the commutator subgroup of $G_k$, and $Z_k$ be the connected center of $G_k$. The group involution $\theta_k$ leaves $G_{k,0}$ and $Z_k$ invariant. For an affine embedding $V_k$ of $G_k/K_k$, its \emph{abelization} $A_k=A_k(V_k)$ is the (affine) GIT quotient
\[
A_k=V_k/\!\!/G_{k,0}.
\]
By definition, $A_k$ is the spectrum of the subalgebra $k[V_k]^{G_{k,0}}$ of $k[V_k]$ 
consisting of $G_{k,0}$-invariant functions in $k[V_k]$. There is a canonical map $\pi:V_k\rightarrow A_k$.  

\begin{remark}
When $V_k$ is an affine embedding of $G_k$, then $V_k$ is a reductive monoid. In this case, the quotient $A_k$ is a commutative monoid. This is the origin of the name ``abelianization".
\end{remark}

Let $T_{k,0}=T_k\cap G_{k,0}$ be the maximal torus of $G_{k,0}$. Let $X_0$ and $X_Z$ be the groups of characters of $T_{k,0}$ and $Z_k$, respectively. Then we have an embedding $X\hookrightarrow X_0\times X_Z$ by restricting characters of $T_k$ to $T_{k,0}$ and $Z_k$. We will identify $X$, hence $\iwt \subset X$, with its image in $X_0\times X_Z$. 

Let $\mathcal{L}=\mathcal{L}(V_k)$ be the submonoid associated with $V_k$. Let $\mathcal{L}_Z\subset X_Z$ be the submonoid such that $(0,\mu)\in\mathcal{L}$ if and only if $\mu\in\mathcal{L}_Z$. We define a partial order on $\CL$ by saying $M_1 \le_Z M_2$ if $M_2 - M_1 \in \CL_Z$. Let $\CM$ be the set of minimal elements in $\CL$ with respect to this new partial order. We write $M_1 \sim M_2$ if $M_1 \le_Z M_2$ and $M_2 \le_Z M_1$. 
 Let $\CM_0 = \CL_Z \cap (-\CL_Z)$ be the greatest subgroup contained in $\CL_Z$ (and in $\CL$). Then we have $\CL = \CM + \CL_Z$.

Then it is clear that the coordinate ring $k[A_k]$ is isomorphic to the monoid algebra $k[\mathcal{L}_Z]$. Since $\mathcal{L}_Z$ is a saturated monoid by definition, $A_k$ is a normal toric variety. 

 \begin{defi}
    The affine embedding $V_k$ is called  \emph{very flat}  if the map $\pi$ is flat, and its scheme-theoretic fibres are reduced and irreducible.
\end{defi}

\begin{remark}
Our very flat embedding is simply referred as flat by Vinberg \cite{Vin}*{Definition~2}. We choose to call such an embedding very flat following Rittatore \cite{Rit}. There are examples where $\pi$ is flat with non-reduced fibers; see \cite{Vin}*{Example~4.2}.
\end{remark}

\begin{prop}\label{prop:flat}
The following are equivalent. 
\begin{enumerate}
    \item $k[V_k]$ is a flat $k[A_k]$-module;
    \item $k[V_k]$ is a free $k[A_k]$-module;
    \item if $M_1 + \chi_1 = M_2 + \chi_2$ ($M_1, M_2 \in \CM$, $\chi_1, \chi_2 \in \CL_Z$), then $M_1 \sim M_2$ (and $\chi_1 \sim \chi_2$);
    \item $k[V_k]$ decomposes as a $k[A_k]$-module into the tensor product
    \[
    k[V_k] = k[A_k]\otimes_k k[G_k/K_k]_{\CM_1},
    \]
    where $\CM_1$ is a set of representative of the cosets of $\CM_0$ in $\CM$,  and 
    $k[G_k/K_k]_{\CM_1}$ is the subspace of $k[V_k]$ spanned by the dual canonical basis elements $B(G/K)_\mu$, for $\mu\in \CM_1$. 
\end{enumerate}
\end{prop}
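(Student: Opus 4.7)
The plan is to prove the cyclic chain $(2)\Rightarrow(1)\Rightarrow(3)\Rightarrow(4)\Rightarrow(2)$. The implications $(2)\Rightarrow(1)$ and $(4)\Rightarrow(2)$ are immediate, since free modules are always flat and the tensor decomposition in $(4)$ exhibits $k[V_k]$ as a free $k[A_k]$-module with basis from $k[G_k/K_k]_{\CM_1}$.

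For $(3)\Rightarrow(4)$, first note that condition $(3)$ yields a disjoint union decomposition $\CL=\bigsqcup_{M\in\CM_1}(M+\CL_Z)$, via the bijection $\CM_1\times\CL_Z\to\CL$, $(M,\chi)\mapsto M+\chi$. I would show the multiplication map
\[
\phi:k[A_k]\otimes_k k[G_k/K_k]_{\CM_1}\longrightarrow k[V_k]
\]
is an isomorphism by exploiting the filtration $k[V_k]=\bigcup_{\nu\in\CL}k[G_k/K_k]_{\le\nu}$ from Theorem~\ref{thm:int}. The key ingredient is that for any $M\in\CM_1$ and $\chi\in\CL_Z$, multiplication by $\chi_\chi$ induces a $G_{k,0}$-equivariant isomorphism of dual Weyl modules $\nabla(M)\xrightarrow{\sim}\nabla(M+\chi)$: since $\chi$ restricts trivially to $T_{k,0}$, the $G_{k,0}$-modules $\nabla(M+\chi)$ and $\nabla(M)$ are isomorphic, and the $G_{k,0}$-equivariant multiplication map is nonzero since it sends the highest weight vector $\chi_M$ to $\chi_{M+\chi}$. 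Surjectivity of $\phi$ then follows by induction along the filtration (each graded piece $\nabla(\nu)$ is covered by $\chi_\chi\cdot B(G/K)_M$ for the unique decomposition $\nu=M+\chi$ afforded by $(3)$), and injectivity follows by matching dimensions of the graded pieces on both sides.

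For $(1)\Rightarrow(3)$, suppose $k[V_k]$ is flat over $k[A_k]$ and $M_1+\chi_1=M_2+\chi_2$ with $M_i\in\CM$, $\chi_i\in\CL_Z$. Applying the equational criterion of flatness to the relation $\chi_{\chi_1}\chi_{M_1}-\chi_{\chi_2}\chi_{M_2}=0$ produces elements $y_j\in k[V_k]$ and, after decomposing into weight components, weight-homogeneous $s_{ij}=c_{ij}\chi_{\eta_{ij}}\in k[A_k]$ with $\eta_{ij}\in\CL_Z$, satisfying $\chi_{M_i}=\sum_j s_{ij}y_j$ and $\chi_{\chi_1}s_{1j}=\chi_{\chi_2}s_{2j}$. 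The second relation forces $\eta_{1j}-\eta_{2j}=M_1-M_2$, so that $\mu_j:=M_1-\eta_{1j}=M_2-\eta_{2j}$ is the same for $i=1,2$. Extracting the $T_k$-weight $M_i$ components of the first relation and projecting onto the one-dimensional $U_k$-invariant line $k\chi_{\mu_j}\subset k[V_k]_{\mu_j}$ reduces everything to a numerical identity $1=\sum_j\gamma_j$ in $k$, whose per-$j$ contributions $\gamma_j$ are the same whether computed from $i=1$ or $i=2$. Some $\gamma_j$ must be nonzero, yielding $\mu_j\in\CL$ with $\mu_j\le_Z M_i$ for both $i$; the minimality of the $M_i$ in $\CM$ then forces $\mu_j\sim M_1\sim M_2$, so $M_1\sim M_2$.

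The main obstacle is the $U_k$-invariant projection step in $(1)\Rightarrow(3)$: the elements $y_j$ returned by the equational criterion are arbitrary elements of $k[V_k]$, yet the weight argument requires extracting their $\chi_{\mu_j}$-components. In characteristic zero this is immediate via the Reynolds operator, which supplies a canonical $k[A_k]$-linear projection $k[V_k]\to k[\CL]=k[V_k]^{U_k}$. In arbitrary characteristic $\ne 2$, I would instead invoke the good filtration structure of Theorem~\ref{thm:int} to argue that flatness of $k[V_k]$ over $k[A_k]$ descends to flatness of $k[\CL]$ over $k[\CL_Z]=k[A_k]$; the subsequent weight analysis then proceeds inside the semigroup algebra $k[\CL]$, where all $T_k$-weight spaces are one-dimensional and the projection issue disappears.
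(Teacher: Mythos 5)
Your chain $(3)\Rightarrow(4)\Rightarrow(2)\Rightarrow(1)$ is essentially sound, and it is more detailed than the paper's own proof, which consists of the remark that $k[V_k]^{U_k}=k[\CL(V_k)]$ and a citation to Vinberg's Proposition~3 (a characteristic-$0$ argument). In particular your observation that multiplication by $\chi_{(0,\chi)}$ induces, on the subquotients of the good filtration, a nonzero $G_{k,0}$-equivariant map between dual Weyl modules with the same highest weight --- hence an isomorphism since $\mathrm{End}_{G_{k,0}}(\nabla(\lambda))=k$ --- is exactly the right way to make $(3)\Rightarrow(4)$ work in positive characteristic, where one cannot simply decompose $k[V_k]$ into irreducibles.

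The genuine gap is in $(1)\Rightarrow(3)$, and you have correctly located it but not closed it. The equational criterion hands you elements $y_j\in k[V_k]$ with no $U_k$-equivariance, and there is no canonical $k[A_k]$-linear projection $k[V_k]\to k[V_k]^{U_k}$ in characteristic $p$; your weight-$\mu_j$ vectors $y_j$ need not have $\mu_j\in\CL$ at all, since $T_k$-weights of $k[V_k]$ are much more plentiful than $B_k$-eigenvalues, so the minimality argument cannot be launched. Your proposed repair --- that ``the good filtration structure'' implies flatness of $k[V_k]$ over $k[A_k]$ descends to flatness of $k[\CL]$ over $k[\CL_Z]$ --- is asserted without any argument, and it is not a formal consequence of good filtrations: what good filtrations give (and what the paper uses elsewhere, e.g.\ in Lemma~\ref{lem:Ukp}) is exactness of $(-)^{U_k}$ on suitable short exact sequences, whereas the descent you need amounts to injectivity of $\mathrm{Tor}_1^{k[A_k]}(k[\CL],-)\to\mathrm{Tor}_1^{k[A_k]}(k[V_k],-)$, which would follow from $k[\CL]$ being a $k[A_k]$-module direct summand of $k[V_k]$ --- precisely the Reynolds-operator splitting that is unavailable in characteristic $p$. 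So the one implication where the characteristic genuinely intervenes remains open in your write-up. (To be fair, the paper's own proof does not address this point either; a complete argument would need either a proof of this descent, or a different route such as $(1)\Rightarrow(2)$ via graded projectivity of the $X_0$-homogeneous summands of $k[V_k]$.)
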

\begin{proof}
The formulation of the proposition is essentially the same as \cite{Vin}*{Proposition~3}. Note that $k[V_k]^{U_k} = k[\CL(V_k)]$ by Theorem~\ref{thm:semigroup}. Also note that we are only interested in the decomposition of $k[V_k]$ as $k[A_k]$-modules, while ignoring the $G_k$-action. 

Now the argument in the proof of \cite{Vin}*{Proposition~3} applies as well. 
\end{proof}

\begin{prop}\label{prop:fiber}
Assume $\pi$ is flat. Then the fibers of $\pi$ are reduced and irreducible if and only if $\CM$ is a submonoid of $\CL$.
\end{prop}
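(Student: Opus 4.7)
The strategy is to reduce to analyzing one distinguished fiber. Let $a_0 \in A_k$ be a point of the unique closed $T_k$-orbit of the affine toric variety $A_k = \mathrm{Spec}\, k[\CL_Z]$; the corresponding maximal ideal $\mathfrak{m}_{a_0} \subset k[A_k]$ is generated by $\{\chi_\chi \mid \chi \in \CL_Z \setminus \CM_0\}$. By the flatness decomposition in Proposition~\ref{prop:flat}(4), the fiber $F_0 := \pi^{-1}(a_0)$ has coordinate ring whose underlying vector space is $k[G_k/K_k]_{\CM_1}$. By $T_k$-equivariance of $\pi$ (and since every point of $A_k$ flows to a point of the closed orbit under a suitable one-parameter subgroup of $T_k$), it suffices to show that $F_0$ is reduced and irreducible if and only if $\CM$ is a submonoid of $\CL$.

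\textbf{Necessity.} Assume $F_0$ is a domain, and let $M_1, M_2 \in \CM$. The subset $\CM$ is stable under $\sim$ (a direct consequence of the definition of $\le_Z$ via $\CL_Z$), so after passing to $\sim$-equivalent representatives we may assume $M_1, M_2 \in \CM_1$; hence $\chi_{M_1}, \chi_{M_2}$ map to nonzero elements of $k[F_0]$. Their product in $k[V_k]$ is $\chi_{M_1 + M_2}$. Writing $M_1 + M_2 = M + \chi$ with $M \in \CM$ and $\chi \in \CL_Z$, we have $\chi_{M_1 + M_2} = \chi_M \cdot \chi_\chi$. If $M_1 + M_2 \notin \CM$, then necessarily $\chi \notin \CM_0$, so $\chi_\chi \in \mathfrak{m}_{a_0}$ and the product $\chi_{M_1} \cdot \chi_{M_2}$ vanishes in $k[F_0]$, contradicting the assumption that $k[F_0]$ is a domain. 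Hence $M_1 + M_2 \in \CM$, so $\CM$ is a submonoid.

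\textbf{Sufficiency.} Conversely, assume $\CM$ is a submonoid, and take $a_0$ corresponding to the character of $\CL_Z$ that restricts to the trivial character on $\CM_0$ and vanishes elsewhere. For $M_1, M_2 \in \CM_1$, the filtration containment $k[G_k/K_k]_{\le M_1} \cdot k[G_k/K_k]_{\le M_2} \subset k[G_k/K_k]_{\le M_1 + M_2}$, together with $M_1 + M_2 \in \CM$, shows that the multiplication on $k[F_0]$ preserves the subspace $k[G_k/K_k]_{\CM_1}$. Applying Lemma~\ref{lem:Ukp} to the quotient $k[V_k]/\mathfrak{m}_{a_0}k[V_k]$ and combining with the decomposition $\CL = \CM + \CL_Z$, one identifies $k[F_0]^{U_k}$ with the monoid algebra $k[\CM/\CM_0]$, which is an integral domain since $\CM/\CM_0$ is a submonoid of the free abelian group $\iwt/\CM_0$. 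The argument of Lemma~\ref{lem:emb1} via \cite{Gross}*{Theorem~9} then yields that $k[F_0]$ is itself a (normal) integral domain.

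The main obstacle is the identification $k[F_0]^{U_k} \cong k[\CM/\CM_0]$ in the sufficiency direction: one must verify that modulo $\mathfrak{m}_{a_0} k[V_k]$, no extraneous relations appear from lower-order filtration terms. This is resolved by combining the multiplicity-freeness of $k[G_k/K_k]$ with Lemma~\ref{lem:Ukp}, together with the observation that the filtration $k[V_k]_{\le \mu}$ contains $\chi_\chi \cdot k[V_k]_{\le \mu - \chi}$ as a subspace for any $\chi \in \CL_Z$ with $\chi \le_Z \mu$; quotienting by $\mathfrak{m}_{a_0}$ kills all such subspaces with $\chi \notin \CM_0$, leaving exactly the $\CM_0$-cosets of elements of $\CM$ as Borel eigenweights of $k[F_0]$.
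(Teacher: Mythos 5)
Your overall strategy coincides with the paper's: reduce to the fiber over the distinguished point of the closed $T_k$-orbit, use the freeness decomposition $k[V_k]=k[A_k]\otimes_k k[G_k/K_k]_{\CM_1}$ from Proposition~\ref{prop:flat}(4), and decide integrality via $U_k$-invariants and Grosshans. The necessity direction is correct and matches the paper.

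The gap is in the sufficiency direction, precisely at the step you flag as the ``main obstacle.'' You propose to get the surjectivity of $k[V_k]^{U_k}\to (k[V_k]/\mathfrak{m}_{a_0}k[V_k])^{U_k}$ (and hence $k[F_0]^{U_k}\cong k[\CM/\CM_0]$) by ``applying Lemma~\ref{lem:Ukp}.'' But Lemma~\ref{lem:Ukp} is stated and proved only for $G_k$-invariant \emph{prime} ideals: its positive-characteristic argument concludes $H=f$ from $H^{p^N}=f^{p^N}$ precisely because $\mathfrak{p}$ is prime, and the input from \cite{LV}*{Theorem~1.1} likewise concerns the function field of an orbit closure. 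Here the ideal $\mathfrak{m}_{a_0}k[V_k]$ is only $G_{k,0}$-invariant, and its primality is the conclusion you are after, so the lemma cannot be invoked without circularity. Your closing ``observation'' about filtration subspaces being killed is a heuristic; in characteristic $p$ the surjectivity on $U_k$-invariants genuinely requires a good-filtration argument, and Grosshans' Theorem~12 also needs a good filtration on the quotient, which you have not produced.

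The paper closes exactly this gap by hand. It first constructs a \emph{specific} transversal $\CM_1$ (via minimal dominant weights $\gamma_i$ and inductively chosen $\Gamma_i'$) with the downward-closure property: if $\lambda\in\CM_1$, $\mu\in\CM$ and $\mu\le\lambda$, then $\mu\in\CM_1$. It then writes $k[V_k]=\mathfrak{p}_1\oplus\mathfrak{p}_2\oplus\mathfrak{p}_3$ with $\mathfrak{p}=\mathfrak{p}_1+\mathfrak{p}_2$, and uses the downward-closure of $\CM_1$ to show that $\mathfrak{p}_1+\mathfrak{p}_3$ is a $G_{k,0}$-submodule admitting a good filtration; the exactness of $U_k$-invariants then follows from \cite{Gross}*{Section~5} applied to $0\to\mathfrak{p}_1\to\mathfrak{p}_1+\mathfrak{p}_3\to\mathfrak{p}_3\to 0$, and simultaneously endows $k[F_0]\cong\mathfrak{p}_3$ with a good filtration so that \cite{Gross}*{Theorem~12} applies. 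Your argument takes $\CM_1$ to be an arbitrary transversal, for which the complement of the ideal need not be $G_{k,0}$-stable, so this route is not available without the extra construction.
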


\begin{proof}
We write 
\begin{equation}\label{eq:flat}
 k[V_k] = \bigoplus_{\mu \in X_z}  k[V_k]_{\mu}, \quad \text{ where }k[V_k]_{\mu} = \bigcup_{(\lambda, \mu) \in \mathcal{L}}k[V_k]_{\le (\lambda,\mu)}.
\end{equation}
Then $k[V_k]_{\mu}$ is $G_{k,0}$-stable and admits a good filtration as a $G_{k,0}$-module. 

Let $\{\gamma_1,\gamma_2,\cdots,\gamma_k\}\subset \iwt_0^+$ be the set of minimal dominant weights (with respect to the usual partial order $\le$). For $1\le i\le k$, let $\Gamma_i\subset X_Z$ be the subset such that $\mu\in \Gamma_i$ if and only if $(\gamma_i,\mu)\in\mathcal{L}(V_k)$. Note that $\Gamma_i+\mathcal{M}_0=\Gamma_i$.  We inductively construct a set $\Gamma_i'\subset\Gamma_i$ of representatives of cosets $\mathcal{M}_0$ in $\Gamma_i$, such that if $r_i+\mathcal{M}_0=r_j+\mathcal{M}_0$ for some $r_i\in\Gamma_i'$ and $r_j\in\Gamma_j'$, then $r_i=r_j$, for any $1\le j \le i\le k$.

We then define
$\mathcal{M}_1=\{(\lambda,\mu)\in \mathcal{M}\mid \lambda\geq\gamma_i,\mu\in\Gamma_i',\text{ for some }1\leq i\leq k\}$.
It is then direct to see that $\mathcal{M}_1$ is a set of representatives of cosets $\mathcal{M}_0$ in $\mathcal{M}$. Moreover, if $\lambda, \mu \in \CM$ with $\mu \le \lambda$ and $\lambda \in \CM_1$, then $\mu \in \CM_1$. By Proposition~\ref{prop:flat} again, we have  $k[V_k] = k[A_k]\otimes_k k[G_k/K_k]_{\CM_1}$.

Let $x \in A_k$ with the defining maximal ideal $\mathfrak{m} \subset k[A_k]$. Let $\mathfrak{p}$ be the ideal in $k[V_k]$ generated by $\mathfrak{m}$. By construction, the ideal $\mathfrak{p}$ is $G_{k,0}$-invariant. The set of points in $A_k$ where $\mathfrak{p}$ is prime is $T_k$-stable. Furthermore, it is open by \cite{Stack}*{Section~37.26}.

Let $e_0\in A_k$ be the point defined by 
\[
\chi(e_0)=\left\{\begin{array}{ll}
    1, & \text{if $\chi\in \CM_0$;} \\
    0, & \text{if $\chi\in \mathcal{L}_Z-\CM_0$.}
\end{array}\right.
\]
It suffices to study the fiber $\pi^{-1}(e_0)$, since it is in the closed $T_k$-orbit of $A_k$.   Let $\mathfrak{m}\subset k[A_k]$ be the defining ideal of $e_0$, and $\mathfrak{p}\subset k[V_k]$ be the ideal generated by $\mathfrak{m}$.

{\it (a) We claim the following sequence is exact  
\[
 0 \rightarrow \mathfrak{p}^{U_k} \rightarrow k[V_k(\CL)]^{U_k} \rightarrow (k[V_k(\CL)]/\mathfrak{p})^{U_k} \rightarrow 0.
\]}

It suffices to show the surjectivity of the quotient map. 

We write $k[V_k(\CL)]= \mathfrak{p}_1 \oplus \mathfrak{p}_2 \oplus \mathfrak{p}_3$ as vector spaces. Here $\mathfrak{p}_1$ is spanned by $B(G/K)_\mu$ for $\mu\in \mathcal{L}-\CM$, $\mathfrak{p}_2$ is spanned by $(e^\chi-1)B(G/K)_\gamma$ for $\gamma\in \CM$ and $\chi\in \CM_0$, and $\mathfrak{p}_3$ is spanned by $B(G/K)_\zeta$ for $\zeta\in \CM_1$.  Then $\mathfrak{p} = \mathfrak{p}_1+ \mathfrak{p}_2$ following Proposition~\ref{prop:flat} (4).

Let $\zeta \in \CM_1$ and $\mu \in \CL$ with $\mu \le \zeta$. Then either $\mu \in \CL- \CM$ or $\mu \in \CM_1$ by our choice of $\CM_1$. It follows that $\mathfrak{p}_1 + \mathfrak{p}_3$ is stable under the $G_{k,0}$-action, since the span of $\cup_{\mu \le \zeta} B(G/K)_\mu$ is stable. Moreover, the space $\mathfrak{p}_1 + \mathfrak{p}_3$ admits a good filtration. Since $\mathfrak{p}_1$ admits a good filtration as a $G_{k,0}$-module as well, we have the following short exact sequence by \cite{Gross}*{Section~5}: 
\[
 0 \rightarrow \mathfrak{p}_1^{U_k} \rightarrow (\mathfrak{p}_1+\mathfrak{p}_3)^{U_k} \rightarrow ((\mathfrak{p}_1+\mathfrak{p}_3)/\mathfrak{p}_1)^{U_k} \rightarrow 0.
\]

On the other hand, we have $k[V_k(\CL)]/\mathfrak{p} \cong (\mathfrak{p}_1 + \mathfrak{p}_3) / \mathfrak{p}_1 \cong \mathfrak{p}_3$  as $G_{k,0}$-modules. This proves claim (a).

Assume $\CM$ is not a submonoid of $\CL$. Then we can find $M_1, M_2 \in \CM$ such that $M_1 + M_2 \in \CL- \CM$. Then $k[V_k(\CL)]/\mathfrak{p}$ can not be an integral domain. 

Assume $\CM$ is a submonoid of $\CL$. Then $\CM_0 \subset \CM$ is a subgroup. We denote by $\CM/\CM_0$ the quotient monoid. Then $(k[V_k(\CL)]/\mathfrak{p})^{U_k} \cong k[\CM/\CM_0]$ by claim (a). This is an integral domain. Then by \cite{Gross}*{Theorem~12}, the quotient $k[V_k(\CL)]/\mathfrak{p}$ is also an integral domain. We conclude that $\mathfrak{p}$ is prime.  
\end{proof}


\section{The canonical embedding}\label{sec:can}

Throughout this section, we assume that $G_k$ is  semisimple. 

\subsection{The enveloping variety}

Recall that $T_k$ is a $\theta_k$-stable maximal torus containing a $\theta_k$-split maximal torus. We consider 
$$
\widetilde{G}_k=G_k\times T_k\qquad\text{and}\qquad \widetilde{\theta}_k=\theta_k\times(\theta_k)_{\mid_{T_k}}.
$$ 
Then $T_k\times T_k$ is a maximal torus of $\widetilde{G}_k$ containing a maximal $\widetilde{\theta}_k$-split torus, and the corresponding weight lattice is $X\times X$. We have $(\mu,\lambda)\le (\mu',\lambda')$ in $X\times X$ if and only if  $\lambda = \lambda'$ and  $\mu\le\mu'$ in $X$. Let $\widetilde{K}_k$ be the $\widetilde{\theta}_k$-fixed-point subgroup of $\widetilde{G}_k$. The spherical weight lattice of $\widetilde{G}_k/\widetilde{K}_k$ is $\iwt\times\iwt$. The monoid of the dominant spherical weights is $\iwtp\times \iwt$. Let $\widetilde{B}_k=B_k\times T_k$ be a Borel subgroup of $\widetilde{G}_k$, and let $\widetilde{U}_k=U_k\times\{e\}$ be the unipotent radical of $\widetilde{B}_k$.

Let us define a new partial order $\preceq$ on $X$ by setting $\mu\preceq\lambda$ if and only if $n\mu\le n\lambda$, for some positive integer $n$. For $\mu\in\iwt$, we define 
\begin{equation}\label{eq:kp}
k[G_k/K_k]_{\preceq\mu}=\bigcup_{\lambda\in\iwtp,\;\lambda\preceq\mu}k[G_k/K_k]_{\le\lambda}.
\end{equation}

\begin{remark}
    Suppose $G_k$ is of adjoint type. Then the weight lattice is spanned by the set of simple roots $\{\alpha_i\mid i\in\I\}$. Then the partial order $\preceq$ coincides with the ordinary partial order $\le$ on $X$. Various combinatorics are simplified in this setting.
\end{remark}

Recall $\overline{T}_k=T_k/T_k^{\theta_k}$ and the character group of the quotient torus $\overline{T}_k$ is isomorphic to $\iwt$. We have the canonical isomorphisms 
\begin{equation}\label{eq:ide}
    k[\widetilde{G}_k/\widetilde{K}_k]\cong k[G_k/K_k]\otimes k[T_k/T_k^{\theta_k}]\cong k[G_k/K_k]\otimes k[\iwt].
\end{equation}
Here $k[\iwt]$ is the group algebra of $\iwt$ with the canonical $k$-linear basis $\{e^\mu\mid\mu\in\iwt\}$.
We define $\widetilde{\mathcal{L}}=\{(\mu,\lambda)\in\iwtp\times\iwt\mid \mu\preceq\lambda\}$.

\begin{lemma}
    The subset $\widetilde{\mathcal{L}}$ is a finitely generated closed saturated submonoid of $\iwtp\times\iwt$ generating $\iwt\times\iwt$ 
\end{lemma}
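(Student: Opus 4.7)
The plan is to verify the four constituent conditions comprising the claim: that $\widetilde{\mathcal{L}}$ is a submonoid, is closed under the partial order $\le$ on $\iwtp \times \iwt$, is saturated in the divisibility sense, generates $\iwt \times \iwt$ as an additive group, and is finitely generated as a semigroup. The first three are essentially formal, while the last two—generation and finite generation—each require a short argument of their own.

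For the formal points: if $\mu_i \preceq \lambda_i$ with witnesses $n_i \in \BZ_{>0}$, then $n_1 n_2$ witnesses $\mu_1 + \mu_2 \preceq \lambda_1 + \lambda_2$, giving the submonoid property. Divisibility-saturation is equally immediate: from $n(\mu, \lambda) \in \widetilde{\mathcal{L}}$ one recovers $\mu \preceq \lambda$ by rescaling the witness, and $\mu \in \iwtp$ follows since $X^+$ is itself saturated in $X$. For closedness, the simple roots of $\widetilde{G}_k = G_k \times T_k$ are $\{(\alpha_i, 0) : i \in \I\}$, so the explicit characterization of $\le$ on $X \times X$ forces $(\mu', \lambda') \le (\mu, \lambda)$ to mean $\lambda' = \lambda$ with $\mu' \le \mu$ in $X$; chaining $n\mu' \le n\mu \le n\lambda$ then yields $\mu' \preceq \lambda'$.

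To show $\widetilde{\mathcal{L}}$ generates $\iwt \times \iwt$, I would apply Lemma~\ref{le:dec} to decompose $\mu = \mu_+ - \mu_-$ and $\lambda = \lambda_+ - \lambda_-$ with $\mu_\pm, \lambda_\pm \in \iwtp$, and write
\[
(\mu, \lambda) = (\mu_+,\, \lambda_+ + \mu_+ + \mu_-) - (\mu_-,\, \lambda_- + \mu_+ + \mu_-).
\]
Each term on the right lies in $\widetilde{\mathcal{L}}$ once one checks, for example, $\lambda_+ + \mu_- \succeq 0$ (and symmetrically $\lambda_- + \mu_+ \succeq 0$). This is the classical fact that, since $G_k$ is semisimple, every dominant weight lies in the non-negative rational span of the simple roots—equivalently, the inverse Cartan matrix has non-negative entries.

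The main obstacle is the finite generation, which I would deduce from Gordan's lemma. Writing $\iwt_\BQ = \iwt \otimes_\BZ \BQ$, the set $\widetilde{\mathcal{L}}$ is precisely the intersection of the lattice $\iwt \times \iwt$ with the cone
\[
C = \bigl\{(\mu, \lambda) \in \iwt_\BQ \times \iwt_\BQ : \langle \alpha_i^\vee, \mu\rangle \ge 0 \text{ for all } i \in \I,\ \lambda - \mu \in \textstyle\sum_{i \in \I} \BQ_{\ge 0}\,\alpha_i\bigr\},
\]
which is rational polyhedral since it is cut out by finitely many rational half-space conditions. Gordan's lemma then produces a finite monoid generating set for $\widetilde{\mathcal{L}}$, completing the argument.
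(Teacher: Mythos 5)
Your proof is correct and follows essentially the same route as the paper's: both reduce finite generation to Gordan's lemma, and both use the fact that for semisimple $G_k$ every dominant weight lies in the non-negative rational span of the simple roots to establish generation of $\iwt\times\iwt$. The only cosmetic differences are that the paper first applies the shear $(\mu,\lambda)\mapsto(\mu,\lambda-\mu)$ to identify $\widetilde{\mathcal{L}}$ with a product $\iwtp\times L$ before invoking Gordan's lemma (whereas you apply it directly to the rational polyhedral cone cutting out $\widetilde{\mathcal{L}}$ inside $\iwt\times\iwt$), and it uses a slightly different explicit decomposition for the generation step.
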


\begin{proof}
It is clear that $\widetilde{\mathcal{L}}$ is a closed saturated submonoid. 

We show $\widetilde{\mathcal{L}}$ is finitely generated.  Thanks to Lemma \ref{le:min}, the map $(\mu,\lambda)\mapsto (\mu,\lambda-\mu)$ defines an isomorphism $\widetilde{\mathcal{L}}\overset{\sim}{\rightarrow}\iwtp\times L$ as monoids, where $L=\iwt\cap\sum_{i\in\Ir}\mathbb{Q}_{\ge 0}\overline{\alpha_i}$ is a submonoid of $\iwt$. Since $\iwt$ is a discrete subgroup of the vector space $\sum_{i\in\Ir}\mathbb{Q}\overline{\alpha_i}$, the monoid $L$ is generated by finitely many elements in $\iwt\cap\{\sum_{i\in\Ir}n_i\overline{\alpha_i}\mid 0\le n_i\le 1\}$ by Gordan's lemma. The monoid $\iwtp$ is finitely generated by Lemma~\ref{lem:emb2}, since $\breve{X}^+ = \mathcal{L}(G_k/K_k)$ for the trivial embedding of $G_k/K_k$. Therefore $\widetilde{\mathcal{L}}$ is finitely generated.

We next show $\widetilde{\mathcal{L}}$ generates $\iwt\times\iwt$. Since $G_k$ is semisimple, the vector space $\mathbb{Q}\otimes_{\mathbb{Z}}\iwt$ is spanned by $\{\overline{\alpha_i}\mid i\in\Ir\}$. For any $(\mu,\lambda)\in \iwtp\times \iwt$, we can take $\gamma\in\sum_{i\in\Ir}\mathbb{N}\overline{\alpha_i}$, such that $\lambda+\gamma-\mu\in\sum_{i\in\Ir}\mathbb{Q}_{\ge 0}\overline{\alpha_i}$, which implies $\mu\preceq\lambda+\gamma$. Therefore we have $(\mu,\lambda)=(\mu,\lambda+\gamma)-(0,\gamma)$, where $(\mu,\lambda+\gamma)$ and $(0,\gamma)$ both belong to $\widetilde{\mathcal{L}}$. Hence $\widetilde{\mathcal{L}}$ generates $\iwt\times\iwt$.

 We complete the proof.
\end{proof}

Set $\widetilde{V}_k=V_k(\widetilde{\mathcal{L}})$ to be the affine embedding of $\widetilde{G_k}/\widetilde{K_k}$ associated with $\widetilde{\mathcal{L}}$.  Under the isomorphism \eqref{eq:ide}, we have
$k[\widetilde{V_k}]=\bigoplus_{\mu\in\iwt}k[G_k/K_k]_{\preceq\mu}e^\mu$. Following Vinberg \cite{Vin} in the group case, we call $\widetilde{V}_k$ the \emph{enveloping variety} associated with the symmetric space $G_k/K_k$.

\begin{remark}
There is slight abuse of terminology. In the group case (see Remark~\ref{remark:groupcase}), the variety $\widetilde{V}_k$ is not the (universal) Vinberg monoid considered in \cite{Vin}*{Theorem 5} in general. Our construction has much simplified combinatorics, and is easier to work with.
\end{remark}

Recall $G_k$ is semisimple by assumption throughout this section. Let us write $Z_k = \{e\} \times T_k$ for the connected center in $\widetilde{G_k}$. It follows that $G_k$ is the commutator subgroup of $\widetilde{G_k}$. We consider the abelination $\pi: \widetilde{V}_k \rightarrow \widetilde{A}_k$, where $\widetilde{A}_k = \widetilde{V}_k/\!\!/G_{k}$. This is a toric variety with the associated torus $\overline{T}_k$. 

\begin{prop}\label{prop:flatV}
The embedding $\widetilde{V}_k$ is very flat.
\end{prop}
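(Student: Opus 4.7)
The plan is to invoke the combinatorial criteria for very flatness established in Propositions~\ref{prop:flat} and~\ref{prop:fiber}, reducing the claim to an explicit description of the submonoid $\widetilde{\CL}_Z\subset\widetilde{\CL}$ and the set $\CM$ of minimal elements with respect to $\le_Z$.

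First I would identify the relevant submonoids concretely. By definition $\widetilde{\CL}_Z$ consists of those $\lambda\in\iwt$ with $0\preceq\lambda$; unwinding the definition of $\preceq$ and applying Lemma~\ref{le:min} shows this is precisely the submonoid $L=\iwt\cap\sum_{i\in\Ir}\BQ_{\ge 0}\overline{\alpha_i}$ that appeared in the proof of the preceding lemma. Under the isomorphism $\phi\colon\widetilde{\CL}\to\iwtp\times L$, $(\mu,\lambda)\mapsto(\mu,\lambda-\mu)$, the submonoid $\widetilde{\CL}_Z$ corresponds to $\{0\}\times L$. Since the $\overline{\alpha_i}$ ($i\in\Ir$) are linearly independent in $\breve{E}$, the cone $\sum_{i\in\Ir}\BQ_{\ge 0}\overline{\alpha_i}$ is strictly convex, so $L\cap(-L)=\{0\}$; the minimal elements of $\iwtp\times L$ with respect to $\le_Z$ are therefore exactly $\iwtp\times\{0\}$, which corresponds under $\phi^{-1}$ to the diagonal $\CM=\{(\mu,\mu)\mid\mu\in\iwtp\}$, with the decomposition $\widetilde{\CL}=\CM+\widetilde{\CL}_Z$ being manifest.

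Next I would verify the two criteria in turn. The set $\CM$ is the image of the diagonal embedding of the monoid $\iwtp$, hence is itself a submonoid of $\widetilde{\CL}$; granted flatness, Proposition~\ref{prop:fiber} then yields reducedness and irreducibility of the fibers of $\pi$. For flatness I would check condition~(3) of Proposition~\ref{prop:flat}: if $(\mu_1,\mu_1)+(0,\nu_1)=(\mu_2,\mu_2)+(0,\nu_2)$ with $\nu_1,\nu_2\in L$, then comparison of first coordinates forces $\mu_1=\mu_2$, and comparison of second coordinates then forces $\nu_1=\nu_2$. Thus the decomposition $\widetilde{\CL}=\CM+\widetilde{\CL}_Z$ is unique on the nose, and in particular $M_1\sim M_2$, so flatness follows.

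No substantial obstacle is anticipated. The argument is essentially bookkeeping once the identification $\widetilde{\CL}\cong\iwtp\times L$ is in hand; the only nontrivial ingredients are Lemma~\ref{le:min} (used to identify $\widetilde{\CL}_Z$ with $L$) and the pointedness of the cone spanned by the spherical roots (used to pin down $\CM$ as the diagonal).
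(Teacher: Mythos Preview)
Your proposal is correct and follows essentially the same approach as the paper: both identify $\widetilde{\CL}_Z$ (equivalently $L$), observe that $\CM_0=\{0\}$ by strict convexity of the cone spanned by the $\overline{\alpha_i}$, pin down $\CM$ as the diagonal $\{(\mu,\mu)\mid\mu\in\iwtp\}$, and then invoke Proposition~\ref{prop:flat}(3) and Proposition~\ref{prop:fiber}. Your explicit use of the isomorphism $\phi\colon\widetilde{\CL}\xrightarrow{\sim}\iwtp\times L$ from the preceding lemma makes the bookkeeping slightly more transparent, but the substance is identical.
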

\begin{proof}
We follow the notations in Section~\ref{sec:ab}. We have $\widetilde{\CL}_Z= \{\lambda \in \iwt \vert 0 \prec \lambda\}$. We further have $\CM_0 = \widetilde{\CL}_Z \cap (-\widetilde{\CL}_Z) = \{0\}$. Recall $\CM$ denotes the set of minimal elements with respect to the partial order $\le_Z$. 

{\it (a) We claim $\CM = \{(\lambda, \lambda) \vert \lambda \in \iwt^+\}$.}

 It is clear $\CM \supset \{(\lambda, \lambda) \vert \lambda \in \iwt^+\}$. For any $(\mu, \lambda ) \in \CM$, we have $\mu \preceq \lambda$, or equivalently, $ 0 \preceq \lambda -\mu$. Since $\iwt$ is a lattice, we have $\lambda -\mu \in \iwt$. Therefore $(0, \lambda -\mu) \in \widetilde{\CL}_Z$. Since $(\mu, \lambda )$ is minimal, we must have $\lambda -\mu \in \CM_0 = \{0\}$. This proves the claim.

Now the proposition follows from Proposition~\ref{prop:flat} (3) and Proposition~\ref{prop:fiber}.
\end{proof}

\begin{prop}
Let $a \in \widetilde{A}_k$. Then the fibre $\pi^{-1}(a)$ is a normal spherical $G_{k}$-variety.
\end{prop}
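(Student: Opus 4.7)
My plan is to compute the algebra of $U_k$-invariants on the fibre $F_a := \pi^{-1}(a)$ explicitly, show it is a normal, multiplicity-free $T_k$-algebra independent of $a$, and then transfer normality and sphericalness back to $F_a$ via standard good-filtration arguments.

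By Proposition~\ref{prop:flatV}, $F_a$ is an integral affine $G_k$-scheme. One has the identifications $k[\widetilde{V}_k]^{U_k} \cong k[\widetilde{\CL}]$ (the monoid algebra, with $\widetilde{B}_k$-eigenfunction of weight $(\mu, \lambda) \in \widetilde{\CL}$ given by $\chi_\mu e^\lambda$) and $k[\widetilde{A}_k] \cong k[\{0\} \times C]$, where $C := \{\lambda \in \iwt : 0 \preceq \lambda\}$. The key algebraic input is that Lemma~\ref{le:min} implies the saturation $C = C_\BQ \cap \iwt$, which yields an isomorphism of monoids
\[
\widetilde{\CL} \xrightarrow{\sim} \iwtp \times C, \qquad (\mu, \lambda) \mapsto (\mu, \lambda - \mu),
\]
and hence an isomorphism $k[\widetilde{\CL}] \cong k[\iwtp] \otimes_k k[\widetilde{A}_k]$ of $k[\widetilde{A}_k]$-algebras.

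Granting that $(-)^{U_k}$ is exact on the short exact sequence of $G_k$-modules
\[
0 \to \mathfrak{m}_a k[\widetilde{V}_k] \to k[\widetilde{V}_k] \to k[F_a] \to 0,
\]
one obtains $k[F_a]^{U_k} \cong k[\widetilde{\CL}]/\mathfrak{m}_a k[\widetilde{\CL}] \cong k[\iwtp]$, independently of $a$. This is the coordinate ring of a normal affine toric variety (since $\iwtp$ is a saturated submonoid of $\iwt$), and the $T_k$-action on it is multiplicity-free. By \cite{Gross}*{Theorem~17}, normality of $k[F_a]^{U_k}$ implies the normality of $k[F_a]$ (granting a good filtration on $k[F_a]$), and the multiplicity-freeness of the $U_k$-invariants forces $k(F_a)^{B_k} = k$, giving $G_k$-sphericalness.

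The main obstacle I anticipate is justifying the exactness of $U_k$-invariants above, which requires $\mathfrak{m}_a k[\widetilde{V}_k]$ to admit a good filtration as a $G_k$-module. Using flatness of $\pi$ we have $\mathfrak{m}_a k[\widetilde{V}_k] \cong \mathfrak{m}_a \otimes_{k[\widetilde{A}_k]} k[\widetilde{V}_k]$, and combined with the $k[\widetilde{A}_k]$-module decomposition from Proposition~\ref{prop:flat}(4) (with $\CM_1 = \CM$ by Proposition~\ref{prop:flatV}), I expect this can be handled by an argument analogous to claim~(a) in the proof of Proposition~\ref{prop:fiber}, which treats precisely the analogous issue for the fibre over the toric origin.
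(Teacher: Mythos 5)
Your overall strategy --- computing $k[\pi^{-1}(a)]^{U_k}$, identifying it with $k[\iwtp]$ via the monoid splitting $\widetilde{\CL}\cong\iwtp\times C$, and then transferring normality and multiplicity-freeness back to the fibre by good-filtration arguments --- is the paper's strategy, and you have correctly isolated the one delicate point: exactness of $(-)^{U_k}$ on $0\to\mathfrak{m}_a k[\widetilde{V}_k]\to k[\widetilde{V}_k]\to k[\pi^{-1}(a)]\to 0$ in positive characteristic. But your proposed resolution of that point is where the genuine gap sits. Claim (a) in the proof of Proposition~\ref{prop:fiber} is not an argument about an arbitrary fibre: its decomposition $k[V_k]=\mathfrak{p}_1\oplus\mathfrak{p}_2\oplus\mathfrak{p}_3$, with $\mathfrak{p}_2$ spanned by $(e^{\chi}-1)B(G/K)_\gamma$ for $\chi\in\CM_0$, is tailored to the distinguished point $e_0$ of the \emph{closed} torus orbit, where $e^{\chi}(e_0)=1$ exactly for $\chi$ in the maximal subgroup $\CM_0$ of $\CL_Z$. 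For a general $a\in\widetilde{A}_k$ the set $\{\chi\mid e^{\chi}(a)\neq 0\}$ is a face of $C$ that is not a group, $\mathfrak{m}_a$ has a different shape, and the construction of the $G_k$-stable complement $\mathfrak{p}_1+\mathfrak{p}_3$ with a good filtration does not go through. Relatedly, the identity $(\mathfrak{m}_a k[\widetilde{V}_k])^{U_k}=\mathfrak{m}_a\, k[\widetilde{\CL}]$ that your computation presupposes cannot be read off from the $k[\widetilde{A}_k]$-module decomposition of Proposition~\ref{prop:flat}(4), because $k[G_k/K_k]_{\CM}$ is only a vector-space (not $G_k$-stable) complement; this identity is precisely what the exactness statement has to deliver.

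The missing idea is a reduction to the closed-orbit case: since $G_k$ and $Z_k$ commute on $\widetilde{V}_k$, one has $\pi^{-1}(a)\cong\pi^{-1}(ta)$ for $t\in Z_k$, so one may take $a$ to be the distinguished point of its orbit; one then replaces $\widetilde{A}_k$ by the open affine toric subvariety $\widetilde{A}_k^J$ defined by the localization $k[\widetilde{A}_k][e^{-\overline{\alpha}_i};\, i\in J]$, on which $a$ now lies in the closed $Z_k$-orbit, and observes that $\pi^{-1}(\widetilde{A}_k^J)\to\widetilde{A}_k^J$ is again the abelianization map of an affine embedding. Only after this localization (which enlarges $\CM_0$ to the subgroup coming from $\{\overline{\alpha}_i\mid i\in J\}$) does the argument of Proposition~\ref{prop:fiber} apply verbatim, yielding the good filtration on $k[\pi^{-1}(a)]$, the isomorphism $k[\pi^{-1}(a)]^{U_k}\cong k[\CM]$, and hence, via \cite{Gross}*{Theorem~17} and multiplicity-freeness, the normality and sphericalness you want. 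With that step inserted, the rest of your argument is correct.
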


\begin{proof}
Note that the actions of $G_k$ and $Z_k$ commute on $\widetilde{V}_k$. Hence $\pi^{-1}(a) \cong \pi^{-1}(ta)$ for any $t \in Z_k$. Let $J \subset \Ir$. By the orbit theory of toric varieties, we can hence assume $a \in \widetilde{A}_k$ is defined by 
\[
 \chi(a) = \begin{cases}
    1, &\text{if } \chi \in \sum_{i \in J}\BQ_{\ge 0}\overline{\alpha}_i  ;\\
    0, &\text{otherwise}.
 \end{cases}
\]

We consider the localization $k[\widetilde{A}_k][e^{-\overline{\alpha}_i} ; i \in J]$, which defines an open affine subvariety $\widetilde{A}^J_k \hookrightarrow \widetilde{A}_k$. The fibre $\pi^{-1}(\widetilde{A}^J_k)$ is an open subvariety $\widetilde{V}_k^J$ of $\widetilde{V}_k$. Note that $\pi: \widetilde{V}_k^J \rightarrow  \widetilde{A}^J_k$ is the abelianization map for the affine embedding $\widetilde{V}_k^J$. Note that $a \in \widetilde{A}^J_k$ is in the closed $Z_k$-orbit of $\widetilde{A}^J_k$. The argument of Proposition~\ref{prop:fiber} applies in the current setting now. 

By Proposition~\ref{prop:flatV}, the scheme-theoretical fiber  $\pi^{-1}(a)$ is reduced and irreducible. The coordinate ring $k[\pi^{-1}(a)]$ admits a good filtration as a $G_k$-module following the proof of Proposition~\ref{prop:fiber}. By Proposition~\ref{prop:fiber} again, we always have a ring isomorphism
\[
 k[\pi^{-1}(a)]^{U_k} \cong k[\CM].
\]

Therefore $k[\pi^{-1}(a)]$ is multiplicity-free and normal, hence a (normal) spherical $G_k$-variety \cite{Gan}*{Theorem~2.8}. 
\end{proof}

Note that $\widetilde{A}_k$ is a monoid with $0$. 
\begin{defi}
The fiber of $0$ under the map $\pi: \widetilde{V}_k \rightarrow \widetilde{A}_k$  is called the asymptotic symmetric space of $G_k/K_k$, denoted by $As (G_k/K_k)$. 
\end{defi}

It follows from the construction that $As(G_k/K_k)$ is a flat deformation of $G_k/K_k$ whose coordinate ring is given by 
\[
 k[As(G_k/K_k)] = {\rm gr}\, k[G_k/K_k].
\]
Here the associated graded ring ${\rm gr}\, k[G_k/K_k]$ is defined similar to \cite{Vinas}.

\subsection{Orbits}

Let $\omega_i$ $(i \in I)$ be the fundamental weights in $X_{\BQ}$. 
\begin{defi}\label{def:EF}
For any $J_1, J_2 \subset \Ir$, we define $\widetilde{\CL}_{J_1, J_2} \subset \widetilde{\CL}$ by 
\begin{align*}
\widetilde{\CL} \backslash \widetilde{\CL}_{J_1, J_2}&= \{(\mu, \gamma) \vert \mu \in \sum_{j \in J_1}\BQ_{\ge0}\overline{\omega}_{j}, \gamma -\mu \in \sum_{j\in J_2}\BQ_{\ge 0}\overline{\alpha}_j\}\\
&= \{(\mu, \gamma) \vert \sum_{j \in J_1} \BQ_{\ge 0}(\overline{\omega}_{j}, \overline{\omega}_{j}) + \sum_{j \in J_2}\BQ_{\ge 0}(0, \overline{\alpha}_j)\}.
\end{align*}

 We say $\widetilde{\CL}_{J_1, J_2}$ is essential if no connected component of the complement of $J_1$ is entirely contained in $J_2$ in terms of the spherical root system in \ref{subsec:spherical}. This generalizes \cite{Vin}*{Definition 4}.
\end{defi}

\begin{rem}\label{rem:EF}
Let us unravel the essential condition here. Let $\widetilde{\CL}_{J_1, J_2}$ be essential. Assume $J_1 \neq \Ir$, otherwise the condition is vacuous.

Let $i$ be in the complement of $J_1$. Then we can write 
\[
 \overline{\omega}_i = \sum_{j \in \Ir} c_j \overline{\alpha}_j, \quad \text{for } c_j \in \BQ_{\ge 0}.
\]
Here $c_j > 0$ if and only if $j$ is in the connected component of $i$. Since the connected component of $i$ is not entirely contained in $J_2$, so there are some $c_j >0 $ with $j \not \in J_2$.
\end{rem}

\begin{prop}\label{prop:Lptilde}
$\widetilde{\CL}_{J_1, J_2} = \widetilde{\CL}(\mathfrak{p})$ for some $\widetilde{G}_k$-stable prime ideal $\mathfrak{p}$  if and only if $\widetilde{\CL}_{J_1, J_2}$ is essential. 

Moreover, all $\widetilde{G}_k$-stable prime ideals of $k[\widetilde{V}_k]$ is of this form. 
\end{prop}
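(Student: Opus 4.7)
The plan is to invoke Theorem~\ref{thm:CJ}, reducing the claim to a classification of closed prime ideals of the monoid $\widetilde{\CL}$. The key observation is that the rational cone spanned by $\widetilde{\CL}$ in $(\iwt \times \iwt)_{\BQ}$ is simplicial, with the $2|\Ir|$ linearly independent extremal generators $\{(\overline{\omega}_j, \overline{\omega}_j), (0, \overline{\alpha}_j)\}_{j \in \Ir}$; this uses that $G_k$ is semisimple so that the spherical root system of \S\ref{subsec:spherical} is of finite type and the $\overline{\omega}_j$ of Remark~\ref{rem:EF} form a basis of $\iwt_{\BQ}$. By direct inspection of Definition~\ref{def:EF}, $\widetilde{\CL} \setminus \widetilde{\CL}_{J_1, J_2}$ is exactly the intersection of $\widetilde{\CL}$ with the face of this simplicial cone indexed by $J_1 \sqcup J_2$. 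Since faces of a simplicial cone are all of this form and their lattice intersections are submonoids, each $\widetilde{\CL}_{J_1, J_2}$ is automatically a prime ideal, and every prime ideal of $\widetilde{\CL}$ arises in this way---which disposes of the ``moreover'' clause up to the essentiality equivalence.

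For the essentiality criterion I would analyze closedness directly. Given $(\mu, \gamma) \in \widetilde{\CL}\setminus\widetilde{\CL}_{J_1, J_2}$ and $(\nu, \gamma) \in \widetilde{\CL}$ with $\mu \leq \nu$, Lemma~\ref{le:min} places $\nu - \mu \in \tfrac{1}{2}\sum_{j\in\Ir}\BN\overline{\alpha}_j$, and linear independence of $\{\overline{\alpha}_j\}$ applied to $\gamma - \mu \in \sum_{j \in J_2}\BQ_{\ge 0}\overline{\alpha}_j$ and $\gamma - \nu \in \sum_j \BQ_{\ge 0}\overline{\alpha}_j$ forces $\nu - \mu$ to be supported on $J_2$. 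Passing to the $\overline{\omega}$-basis via the spherical Cartan matrix $A$ determined by $\overline{\alpha}_j = \sum_k A_{kj}\overline{\omega}_k$, the requirement $(\nu, \gamma) \in \widetilde{\CL}\setminus\widetilde{\CL}_{J_1, J_2}$ becomes $\sum_{j \in J_2} f_j A_{kj} = 0$ for every $k \notin J_1$, where $\nu - \mu = \sum_j f_j \overline{\alpha}_j$. Positive-definiteness of $A$ restricted to each connected component of the induced subdiagram on $\Ir \setminus J_1$ then shows this is automatic for all admissible $(f_j)$ exactly when no such component lies entirely in $J_2$, i.e., exactly when $(J_1, J_2)$ is essential.

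For the ``only if'' direction I would make this explicit: a failure of essentiality produces a component $C \subset \Ir \setminus J_1$ contained in $J_2$, and positive-definiteness of $A|_C$ lets one pick $\sum_{j \in C} f_j \overline{\alpha}_j$ with strictly positive $\overline{\omega}_k$-coefficients for $k \in C$; absorbing the (non-positive) contributions along the $J_1$-nodes adjacent to $C$ in $\Ir$ into a suitable $\mu = \sum_{k \in J_1} c_k \overline{\omega}_k$ produces a dominant $\nu := \mu + \sum_{j \in C} f_j \overline{\alpha}_j$ for which $(\mu, \nu) \in \widetilde{\CL}\setminus\widetilde{\CL}_{J_1, J_2}$ while $(\nu, \nu) \in \widetilde{\CL}_{J_1, J_2}$ with $\mu \le \nu$, witnessing non-closedness. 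The main obstacle I anticipate is making the Cartan-matrix bookkeeping rigorous in general---in particular aligning the combinatorial essentiality condition (components of the subdiagram on $\Ir \setminus J_1$, not of the full spherical Dynkin diagram) with the positive-definiteness analysis in the $\overline{\omega}$-basis, and verifying that compensating $c_k$'s always exist because the nodes adjacent to $C$ outside $C$ necessarily lie in $J_1$ by maximality of $C$ in $\Ir \setminus J_1$.
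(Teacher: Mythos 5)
Your proposal is correct and follows the same skeleton as the paper's proof: reduce via Theorem~\ref{thm:CJ} to classifying closed prime ideals of the monoid $\widetilde{\CL}$, identify the prime ideals with the pairs $(J_1,J_2)$, and then prove that closedness is equivalent to essentiality. Where you diverge is in the execution of the two combinatorial halves. For the classification of prime ideals, your explicit appeal to the simplicial cone with extremal generators $\{(\overline{\omega}_j,\overline{\omega}_j),(0,\overline{\alpha}_j)\}_{j\in\Ir}$ and the face--prime-ideal correspondence is a clean packaging of what the paper does by hand in its steps (a) and (c) via the shear $(\mu,\gamma)\mapsto(\mu,\gamma-\mu)$; this buys you the ``moreover'' clause essentially for free. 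For the closed $\Leftrightarrow$ essential equivalence, the paper works in the $\overline{\alpha}$-basis, expanding $\overline{\omega}_i$ for $i\notin J_1$ as $\sum_j c_j\overline{\alpha}_j$ and tracking a strictly positive coefficient outside $J_2$ (Remark~\ref{rem:EF}), whereas you work dually in the $\overline{\omega}$-basis through the spherical Cartan matrix; likewise your non-closedness witness ascends from $(\mu,\nu)$ to $(\nu,\nu)$ while the paper descends from $(\mu,\gamma)$ to $(0,\gamma)$. Both routes are valid. The one step you should nail down is the decoupling in your ``if'' direction: dominance of $\nu$ only gives $\sum_{j\in J_2}f_jA_{kj}\ge 0$ for $k\notin J_1$, and to upgrade this to equality you must first restrict to a connected component $C$ of $\Ir\setminus J_1$, observe that contributions from $j\in J_1$ and from other components are $\le 0$ (so $A_C f_C\ge 0$ componentwise), then invoke the finite-type criterion (\cite{Kac}*{Theorem~4.3}, already used in the paper after Theorem~\ref{thm:semigroup}) together with the vanishing of $f_{k_0}$ for some $k_0\in C\setminus J_2$ supplied by essentiality to force $f_C=0$; only then does the remaining $\beta$ supported on $J_1\cap J_2$ pair nonpositively with every $\alpha_k'^\vee$, $k\notin J_1$, yielding the desired equality. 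This is exactly the bookkeeping you flagged, and it does go through.
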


\begin{proof}
Let $\widetilde{\CL}_{J_1, J_2}$ be essential. We show $\widetilde{\CL}_{J_1, J_2} = \widetilde{\CL}(\mathfrak{p})$ for some $\widetilde{G}_k$-stable prime ideal $\mathfrak{p}$  Thanks to Theorem~\ref{thm:CJ}, we show $\widetilde{\CL}_{J_1, J_2}$ is a closed saturated ideal of  $\widetilde{\CL}$. 

(a) We first show  $\widetilde{\CL}_{J_1, J_2}$ is a saturated ideal of $\widetilde{\CL}$. Consider the embedding of monoids 
\[
\psi: \breve{X}^+ \times \breve{X} \rightarrow  \breve{X}^+ \times \breve{X}, \qquad (\mu, \gamma) \rightarrow (\mu, \gamma- \mu).
\]
We see the $\widetilde{\CL} \backslash \widetilde{\CL}_{J_1, J_2}$ is a subsemigroup of $\widetilde{\CL}$. It follows that $\widetilde{\CL}_{J_1, J_2}$ is a  prime  ideal of $\widetilde{\CL}$.

(b) We next show $\widetilde{\CL}_{J_1, J_2}$ is closed. Let $(\mu_1, \gamma_1)$  and $(\mu_2, \gamma_2)$ be in $\widetilde{\CL}$ such that $(\mu_2, \gamma_2) \le (\mu_1, \gamma_1)$ and $(\mu_1, \gamma_1) \in \widetilde{\CL}_{J_1, J_2}$. By definition, we have 
\[
\gamma_1 = \gamma_2 \quad \text{ and } \quad \mu_1 - \mu_2 \in \sum_{i \in \Ir} \BQ_{\ge 0}\overline{\alpha}_i.
\]

Assume $(\gamma_1 -\mu_1) \not \in \sum_{j\in J_2}\BQ_{\ge 0}\overline{\alpha}_j$ first. Then we have $\gamma_2 -\mu_2 = \gamma_1 -\mu_1 + \mu_1 - \mu_2 \not \in \sum_{j\in J_2}\BQ_{\ge 0}\overline{\alpha}_j$ either. So in this case, we have $(\mu_2, \gamma_2) \in \widetilde{\CL}_{J_1, J_2}$.

Assume $\mu_1 \not \in \sum_{j \in J_1}\BQ_{\ge0}\overline{\omega}_{j}$ now. We can further assume $(\gamma_1 -\mu_1) \in \sum_{j\in J_2}\BQ_{\ge 0}\overline{\alpha}_j$, as well as $\mu_2  \in \sum_{j \in J_1}\BQ_{\ge0}\overline{\omega}_{j}$, otherwise we would be done. We write $\mu_1 = \sum_{j \in J_1}d_j \overline{\omega}_j + \sum_{j \not \in J_1}c_j \overline{\omega}_j = \mu_1' + \mu_1''$  for $d_j, c_j \in \BQ_{\ge 0}$.

Then we have 
\begin{align*}
 \gamma_2 - \mu_2 &= (\gamma_1 - \mu_1) + (\mu_1 -\mu_2) \\
 &= (\gamma_1 - \mu_1) + (\mu_1- \mu_1') +  (\mu'_1 -\mu_2) \\
 & \in \sum_{j\in J_2}\BQ_{\ge 0}\overline{\alpha}_j + \mu_1'' + \sum_{j \in \Ir}\BQ_{\ge 0}\overline{\alpha}_j
\end{align*}
Since $\mu''_1 = \sum_{j \in \Ir} c_j \overline{\alpha}_j$ with $c_j >0 $ for some $j \not \in J_2$ by Remark~\ref{rem:EF}, we have 
$ \gamma_2 - \mu_2 \not \in \sum_{j\in J_2}\BQ_{\ge 0}\overline{\alpha}_j$. This shows $(\mu_2, \gamma_2) \in \widetilde{\CL}_{J_1, J_2}$.

Now let $\widetilde{\CJ}$ be a closed prime ideal of $\widetilde{\CL}$. We show $\widetilde{\CJ} = \widetilde{\CL}_{J_1, J_2}$ for some essential $\widetilde{\CL}_{J_1, J_2}$. 

(c) We first show $\widetilde{\CJ} = \widetilde{\CL}_{J_1, J_2}$ for some $J_1, J_2 \subset \Ir$. 

We first see that $\widetilde{\CL}\backslash\widetilde{\CJ}$ is a subsemigroup of $\breve{X}^+ \times \breve{X}$.
Since $\widetilde{\CJ}$ is prime, we see that if $(\mu,\gamma) \in \widetilde{\CL}\backslash\widetilde{\CJ}$ then $\BQ_{\ge 0} (\mu,\gamma) \cap (\breve{X}^+ \times \breve{X}) \subset \widetilde{\CL}\backslash\widetilde{\CJ}$. Let 
\[
J_1 = \{i \in \Ir \vert \BQ_{\ge 0}(\overline{\omega}_i, \overline{\omega}_i) \cap (\widetilde{\CL}\backslash\widetilde{\CJ}) \neq \emptyset\}, J_2 = \{i \in \Ir \vert \BQ_{\ge 0}(0, \overline{\alpha}_i) \cap (\widetilde{\CL}\backslash\widetilde{\CJ}) \neq \emptyset \}.
\]
 It follows that $\widetilde{\CJ} \subset \widetilde{\CL}_{J_1, J_2}$ following Definition~\ref{def:EF}. Since $\widetilde{J}$ is an ideal, it follows from the definition of $J_1$ and $J_2$, we have $\widetilde{\CJ} \supset \widetilde{\CL}_{J_1, J_2}$. We conclude that $\widetilde{\CJ} =\widetilde{\CL}_{J_1, J_2}$.

(d) We next show $\widetilde{\CL}_{J_1, J_2}$ must be essential. 

We have $J_1 \neq \Ir$, otherwise there is nothing to prove.  Assume the contrary, there is a connected component $J_1'$ of the complement of $J_1$ that is completely contained in $J_2$.  So there exists some $\gamma' \in \sum_{j \in J_2}\BQ_{\ge 0} \overline{\alpha}_j \cap \breve{X}$ such that $\langle \alpha^\vee_s,\gamma'\rangle > 0$ for all $s \in J'_1$.

 Let $(\mu, \gamma) \in\widetilde{\CL}_{J_1, J_2}$ be such that $\mu = c_i \overline{\omega}_i $ for some $c_i \in \BZ_{> 0}$ with $ i \in J_1'$, and $\gamma \in \sum_{j \in J_2}\BQ_{\ge 0} \overline{\alpha}_j$. The existence of such  $(\mu, \gamma) \in\widetilde{\CL}_{J_1, J_2}$ is guaranteed by the fact that $J'_1 \subset J_2$. Up to rescaling, we can further assume $0 \le \mu$. Recall $\widetilde{\CJ} =\widetilde{\CL}_{J_1, J_2}$ is closed. Then we obtain that $(0, \gamma) \in \widetilde{\CL}_{J_1, J_2}$, which is a contradiction to the definition of $\widetilde{\CL}_{J_1, J_2}$. This finishes the claim.
\end{proof}

\subsection{The GIT quotient}

Take $\lambda\in X^+$, such that $\langle \alpha_i^\vee,\lambda\rangle>0$, for all $i\in\I$. Then $\overline{\lambda}=\lambda-\theta_X(\lambda)$ belongs to $\iwtp$ by Lemma \ref{le:bar}. We choose the ample line bundle of $\widetilde{V}_k$ as the trivial one and the $\overline{T}_k$-linearization is twisted by the character $\overline{\lambda}$. Then the geometric invariant theory (GIT) quotient (see, for example, \cite{GIT}) is
\begin{equation}\label{eq:can}
    \widetilde{V}_k /\!\!/_{\overline{\lambda}}\overline{T}_k=\text{Proj }\bigoplus_{n\ge 0}k[G_k/K_k]_{\preceq n\overline{\lambda}} e^{n \overline{\lambda}}.
\end{equation}

 For any subset $J\subset\Ir$, $\widetilde{\CL}_{\Ir, J}$ is always essential by Definition~\ref{def:EF}. Hence by Theorem~\ref{thm:CJ}, we have the associated $\widetilde{G}_k$-invariant prime ideal
 \[
 \mathfrak{p}_{\Ir, J} = \bigcup_{(\mu,\lambda) \in \widetilde{\CL}_{\Ir, J}} k[\widetilde{G}_k/\widetilde{K}_k]_{\le (\mu,\lambda)}.
 \]

We shall simply write $ \mathfrak{p}_J = \mathfrak{p}_{\Ir, J}$. Let $\widetilde{\mathcal{O}}_J\subset\widetilde{V}_k$ be the open $\widetilde{G}_k$-orbit on the closed subvariety defined by the prime ideal $\mathfrak{p}_J$. Let 
\begin{equation*}
    \widetilde{V}^0_k=\bigcup_{J\subset\Ir}\widetilde{\mathcal{O}}_J
\end{equation*}
be the union of these orbits. It is open in $\widetilde{V}_k$ by Theorem~\ref{thm:CJ}.

\begin{theorem}
    When linearized by $\overline{\lambda}$, the $\overline{T}_k$-action on $\widetilde{V}_k $ has semistable and stable locus $\widetilde{V}_k^0$. Therefore the GIT quotient $\widetilde{V}_k/\!\!/_{\overline{\lambda}}\overline{T}_k$ is the same as the geometric quotient $\widetilde{V}^0_k/\overline{T}_k$, and is independent of the choice of $\lambda$.
\end{theorem}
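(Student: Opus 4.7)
The approach is to analyze the $\overline{T}_k$-semi-invariants directly from the weight decomposition of $k[\widetilde{V}_k]$ furnished by \eqref{eq:ide}: the weight-$n\overline{\lambda}$ subspace equals $k[G_k/K_k]_{\preceq n\overline{\lambda}}e^{n\overline{\lambda}}$, identifying the graded ring in \eqref{eq:can} with the GIT ring. Since $\overline{T}_k$ commutes with $\widetilde{G}_k$, the semistable locus is a union of $\widetilde{G}_k$-orbits, which Proposition~\ref{prop:Lptilde} indexes by essential pairs $(J_1, J_2)$. Consequently $\widetilde{\mathcal{O}}_{J_1, J_2}$ is semistable if and only if some $(\mu, n\overline{\lambda}) \in \widetilde{\CL}\setminus \widetilde{\CL}_{J_1, J_2}$ exists with $n > 0$.

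The inclusion $\widetilde{V}_k^0 \subseteq \widetilde{V}_k^{ss}$ is immediate: choosing $\mu = n\overline{\lambda}$ places $(n\overline{\lambda}, n\overline{\lambda})$ in $\widetilde{\CL}\setminus \widetilde{\CL}_{\Ir, J}$ for every $J \subseteq \Ir$, so the section $\chi_{n\overline{\lambda}}e^{n\overline{\lambda}}$ witnesses the semistability of $\widetilde{\mathcal{O}}_{\Ir, J}$.

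The main obstacle is the reverse inclusion. Fix an essential pair $(J_1, J_2)$ with $J_1 \ne \Ir$, and suppose $(\mu, n\overline{\lambda}) \in \widetilde{\CL}\setminus \widetilde{\CL}_{J_1, J_2}$ with $n > 0$. Definition~\ref{def:EF} lets us write $\mu = \sum_{j\in J_1}a_j \overline{\omega}_j$ and $n\overline{\lambda} - \mu = \sum_{j\in J_2}d_j \overline{\alpha}_j$ with $a_j, d_j \ge 0$. Essentiality (with connected components taken in the induced subgraph on $\Ir \setminus J_1$, as used in the proof of Proposition~\ref{prop:Lptilde}(d)) provides $j_0 \in (\Ir\setminus J_1)\setminus J_2$. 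We pair both sides of $n\overline{\lambda} - \mu = \sum_{j\in J_2}d_j \overline{\alpha}_j$ with the coroot $\alpha_{j_0}^\vee$ restricted to $\iwt_\BQ$: a direct computation from $\theta_Y(\alpha_i^\vee) = \alpha_i^\vee$ for $i \in \I_\bullet$ and $\theta_Y(\alpha_i^\vee) = -w_\bullet \alpha_{\tau i}^\vee$ for $i \in \I_\circ$ (\S\ref{sec:iwt}) yields $\langle \alpha_{j_0}^\vee, \overline{\omega}_j\rangle = c_{j_0}\delta_{j_0,j}$ for $j \in \Ir$ with some $c_{j_0} > 0$, and $\langle \alpha_{j_0}^\vee, \overline{\alpha}_j\rangle \le 0$ for $j \in \Ir\setminus\{j_0\}$ by the standard non-positivity of off-diagonal Cartan entries. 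Since $j_0 \notin J_1$, the left-hand side equals $nc_{j_0}\lambda_{j_0} > 0$ (writing $\overline{\lambda} = \sum_j \lambda_j \overline{\omega}_j$, regularity of $\overline{\lambda}$ forces $\lambda_{j_0} > 0$), while the right-hand side is nonpositive since every $j \in J_2$ satisfies $j \ne j_0$. This contradiction yields $\widetilde{V}_k^{ss} \subseteq \widetilde{V}_k^0$.

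To promote this to the geometric-quotient statement, we invoke the abelianization $\pi:\widetilde{V}_k \to \widetilde{A}_k$ of Proposition~\ref{prop:flatV}. Since $\pi$ is $\overline{T}_k$-equivariant and very flat, and since each orbit $\widetilde{\mathcal{O}}_{\Ir, J}$ surjects onto a single $\overline{T}_k$-orbit on the toric variety $\widetilde{A}_k$, the $\overline{T}_k$-orbits on $\widetilde{V}_k^0$ are closed and are separated by the weight-$n\overline{\lambda}$ sections. Hence the GIT quotient coincides with the geometric quotient $\widetilde{V}_k^0/\overline{T}_k$, which depends only on $\widetilde{V}_k^0$ and is therefore independent of the choice of regular $\lambda$.
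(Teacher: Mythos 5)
Your identification of the semistable locus is correct and follows the paper's route; in fact your explicit pairing computation (choosing $j_0\in(\Ir\setminus J_1)\setminus J_2$ via essentiality and pairing $n\overline{\lambda}-\mu=\sum_{j\in J_2}d_j\overline{\alpha}_j$ against $\alpha_{j_0}^\vee$) supplies the detail behind the paper's one-line assertion that $k[G_k/K_k]_{\preceq n\overline{\lambda}}e^{n\overline{\lambda}}\subset\mathfrak{p}_{J_1,J_2}$ whenever $J_1\neq\Ir$, and it is sound.

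The gap is in your final paragraph. First, the theorem asserts that $\widetilde{V}_k^0$ is also the \emph{stable} locus, and stability requires finite stabilizers; your argument never addresses stabilizers at all. Closed orbits alone do not suffice (a closed orbit with positive-dimensional stabilizer is semistable but not stable). Second, even the closedness claim is not established: knowing that $\widetilde{\mathcal{O}}_{\Ir,J}$ surjects onto a single $\overline{T}_k$-orbit of $\widetilde{A}_k$ does not prevent a $\overline{T}_k$-orbit in $\widetilde{V}_k^0$ from degenerating to a smaller $\overline{T}_k$-orbit lying over the same point of $\widetilde{A}_k$ (i.e., inside a fibre of $\pi$) or inside the same $\widetilde{G}_k$-orbit; the asserted surjectivity is itself unproved. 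The missing ingredient, which is how the paper closes the argument, is that $\overline{T}_k$ acts \emph{freely} on $\widetilde{V}_k^0$: if $N\subset\overline{T}_k$ stabilizes $x\in\widetilde{V}_k^0$, then since $\overline{T}_k$ centralizes the $\widetilde{G}_k$-action, $N$ fixes $\overline{\widetilde{G}_k x}$ pointwise; the functions $\chi_\mu e^\mu$ for $\mu\in\iwtp$ do not vanish identically on this closure (its ideal is $\mathfrak{p}_{\Ir,J}$) and are $N$-eigenfunctions of weight $\mu|_N$, so $\mu|_N=1$ for all $\mu\in\iwtp$; since $\iwtp$ generates the character lattice $\breve{X}$ of $\overline{T}_k$ (Lemma~\ref{le:dec}), $N$ is trivial. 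Freeness forces all $\overline{T}_k$-orbits in $\widetilde{V}_k^0$ to have dimension $\dim\overline{T}_k$, hence to be closed in $\widetilde{V}_k^0$ (an orbit boundary consists of strictly smaller orbits), which yields both the equality of stable and semistable loci and the fact that the GIT quotient is the geometric quotient $\widetilde{V}_k^0/\overline{T}_k$; independence of $\lambda$ then follows as you say.
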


\begin{proof}
Take $x\in\widetilde{V}_k \backslash\widetilde{V}_k^0$. Let $\mathfrak{p}$ be the ideal of $k[\widetilde{V}_k]$ consisting of the functions which vanish on the orbit closure $\overline{\widetilde{G}_k\cdot x}$. Then $\mathfrak{p}$ is a $\widetilde{G}_k$-stable prime ideal, and $\mathfrak{p} = \mathfrak{p}_{J_1, J_2}$ for some essential $(J_1, J_2)$. We have $J_1 \neq \Ir$ by definition. It follows that $  k[G_k/K_k]_{\preceq n\overline{\lambda}}e^{n\overline{\lambda}}\subset \mathfrak{p}$, for any $n>0$. This show that $x$ is unstable. 

Note that for any $\widetilde{G}_k$-stable essential prime ideal  $\mathfrak{p}_{J_1, J_2}$, we have $  k[G_k/K_k]_{\preceq n\overline{\lambda}}e^{n\overline{\lambda}}\not\subset \mathfrak{p}_{J_1, J_2}$ for $n >0$ if and only if $J_1 = \Ir$. This shows   $\widetilde{V}_k^0$ is the semistable locus. 

To show semistable locus coincides with the stable locus it would suffice to show the $\overline{T}_k$-action on $\widetilde{V}^0_k$ is free. Let $x \in \widetilde{V}^0_k$, and denote by $N$ the stabilizer of $x$ in $\overline{T}_k$. Then $N$ fixes the closure $\overline{\widetilde{G}_k x}$, hence the coordinate ring $k[\overline{\widetilde{G}_k x}]$. We know $ k[\overline{\widetilde{G}_k x}] = k[\widetilde{V_k}]/ \mathfrak{p}_{J_1,J_2}$ for some essential $\mathfrak{p}_{J_1, J_2}$. We have $J_1 = \Ir$ by definition. Hence 
$\chi_{\mu} e^\mu \not \in \mathfrak{p}_{J_1, J_2}$ for any $\mu \in \breve{X}^+$. Since $\chi_{\mu} e^\mu$ is $N$-invariant for any $\mu \in \breve{X}^+$, $N$ must be trivial. 
\end{proof}

Let us write $P_k=\widetilde{V}^0_k/\overline{T}_k$.  Then $P_k$ is an embedding of $G_k/K_k$.  For any $J\subset\Ir$, we set $\mathcal{O}_J$ to be the image of $\widetilde{\mathcal{O}}_J$ under the canonical projection $\widetilde{V}_k^0\rightarrow P_k$. For $i\in\Ir$, we write $D_i=\overline{\mathcal{O}_{\Ir-\{i\}}}$ to be the $G_k$-stable prime divisor of $P_k$.

\begin{thm}\label{thm:sp}
    The embedding $P_k$ of $G_k/K_k$ satisfies the following properties:

    (1) $P_k$ has the unique closed $G_k$-orbit $\mathcal{O}_{\emptyset}$;

    (2) for a $B_k$-stable prime divisor $D$ of $G_k/K_k$, its closure $\overline{D}$ inside $P_k$ does not contain $\mathcal{O}_\emptyset$;

    (3) for $i\in\Ir$, the $G_k$-invariant valuation $v_i=v_{D_i}$ associated to the divisor $D_i$ satisfies: $v_i(\chi_{\overline{\alpha_i}})<0$, and $v_i(\chi_{\overline{\alpha_j}})=0$ for $j\in\Ir$ with $j\neq i$.

    Therefore $P_k$ is the (unique) canonical embedding of $G_k/K_k$.
\end{thm}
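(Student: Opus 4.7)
The plan is to verify each of (1), (2), (3) by translating it into a property of the $\widetilde{G}_k$-invariant prime ideals $\mathfrak{p}_J := \mathfrak{p}_{\Ir, J} \subset k[\widetilde{V}_k]$ (cf.\ Proposition~\ref{prop:Lptilde}) and then to apply Proposition~\ref{prop:va} together with the Luna--Vust classification to conclude that $P_k$ is a complete simple toroidal embedding with colored cone $(\CV, \emptyset)$, hence canonical. Completeness of $P_k$ is inherited from the Proj description \eqref{eq:can}: its degree-zero piece is $k[G_k/K_k]_{\preceq 0}$, which equals $k$ because any $\lambda \in \iwtp$ with $\lambda \preceq 0$ must satisfy $-n\lambda = \sum_i c_i \alpha_i$ with $c_i \ge 0$ for some $n > 0$, whence $n^2(\lambda, \lambda) = -n\sum_i c_i (\alpha_i, \lambda) \le 0$ by dominance of $\lambda$, and positive-definiteness in the semisimple case forces $\lambda = 0$.

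For (1), Theorem~\ref{thm:CJ} combined with Definition~\ref{def:EF} gives $\widetilde{\mathcal{O}}_{J'} \subset \overline{\widetilde{\mathcal{O}}_J}$ iff $\widetilde{\CL}_{\Ir, J'} \supset \widetilde{\CL}_{\Ir, J}$ iff $J' \subset J$, and this ordering descends through the $\overline{T}_k$-torsor $\widetilde{V}_k^0 \to P_k$ to the corresponding ordering on $\{\mathcal{O}_J\}_{J \subset \Ir}$; in particular $\mathcal{O}_\emptyset$ is the unique minimal orbit. To verify that $\widetilde{\mathcal{O}}_\emptyset$ is actually closed in $\widetilde{V}_k^0$, observe that any $\widetilde{G}_k$-orbit in its closure inside $\widetilde{V}_k$ corresponds to an essential pair $(J_1, J_2)$ with $\widetilde{\CL}_{J_1, J_2} \supset \widetilde{\CL}_{\Ir, \emptyset}$, forcing $J_2 = \emptyset$; among these only the orbit with $J_1 = \Ir$ lies in $\widetilde{V}_k^0$.

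For (2), every $B_k$-stable prime divisor $D$ of $G_k/K_k$ is cut out by some $\chi_\lambda$ with $\lambda \in \iwtp$ by Lemma~\ref{le:Uk}, and $\chi_\lambda e^\lambda \in k[\widetilde{V}_k]$ is a regular $\widetilde{B}_k$-eigenfunction of weight $(\lambda, \lambda)$ vanishing on the preimage of $D$ in $\widetilde{G}_k/\widetilde{K}_k \cong G_k/K_k \times \overline{T}_k$. Since $\widetilde{\CL} \setminus \widetilde{\CL}_{\Ir, \emptyset}$ consists exactly of diagonal pairs, $(\lambda, \lambda) \notin \widetilde{\CL}_{\Ir, \emptyset}$, so $\chi_\lambda e^\lambda \notin \mathfrak{p}_\emptyset$; its zero locus in $\widetilde{V}_k^0$ contains the preimage of $\overline{D}$ but does not contain $\widetilde{\mathcal{O}}_\emptyset$, and therefore $\overline{D}$ does not contain $\mathcal{O}_\emptyset$.

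For (3), smoothness of the $\overline{T}_k$-torsor implies $\widetilde{v}_i|_{k(G_k/K_k)} = v_i$ (no ramification), where $\widetilde{v}_i$ is the valuation of $\widetilde{D}_i = \overline{\widetilde{\mathcal{O}}_{\Ir - \{i\}}}$. Factor $\chi_{\overline{\alpha_j}} = (\chi_{\overline{\alpha_j}} e^{\overline{\alpha_j}}) \cdot e^{-\overline{\alpha_j}}$ in $k(\widetilde{V}_k^0)^\times$: both $\chi_{\overline{\alpha_j}} e^{\overline{\alpha_j}}$ and $e^{\overline{\alpha_j}}$ lie in $k[\widetilde{V}_k]$ as $\widetilde{B}_k$-eigenfunctions of weights $(\overline{\alpha_j}, \overline{\alpha_j})$ and $(0, \overline{\alpha_j})$ respectively, and by Definition~\ref{def:EF} their membership in $\mathfrak{p}_{\Ir - \{i\}}$ is controlled by whether the second-minus-first weight coordinate lies in $\sum_{k \ne i} \BQ_{\ge 0} \overline{\alpha_k}$. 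For the first factor this difference is $0$, so $\widetilde{v}_i(\chi_{\overline{\alpha_j}} e^{\overline{\alpha_j}}) = 0$; for the second it is $\overline{\alpha_j}$, which lies in the cone iff $j \ne i$ (using that $\{\overline{\alpha_k}\}_{k \in \Ir}$ is a $\BQ$-basis of $\iwt \otimes \BQ$ in the semisimple case), so $\widetilde{v}_i(e^{\overline{\alpha_j}}) = 0$ for $j \ne i$ and $\widetilde{v}_i(e^{\overline{\alpha_i}}) \ge 1$. Hence $v_i(\chi_{\overline{\alpha_j}}) = 0$ for $j \ne i$ and $v_i(\chi_{\overline{\alpha_i}}) = -\widetilde{v}_i(e^{\overline{\alpha_i}}) \le -1 < 0$. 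By Proposition~\ref{prop:va} the $\varrho(v_i)$ generate the extremal rays of $\CV$, completing the identification of the colored cone and the proof. The main technical point is the bookkeeping in step (3): carrying the weight data through the factorization and confirming that $\widetilde{v}_i$ really is the pullback of $v_i$ along the smooth torsor, so that the unramified identity $\widetilde{v}_i|_{k(G_k/K_k)} = v_i$ applies.
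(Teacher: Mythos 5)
Your overall strategy coincides with the paper's: work in the enveloping variety $\widetilde{V}_k$, identify orbits and divisors with the prime ideals $\mathfrak{p}_{J_1,J_2}$, and descend valuations through the $\overline{T}_k$-torsor $\widetilde{V}_k^0\rightarrow P_k$. Parts (1) and (2) are essentially the paper's argument (your explicit check of completeness via $k[G_k/K_k]_{\preceq 0}=k$ is a nice addition; the paper instead gets completeness from the Luna--Vust criterion once the colored cone is identified with $(\CV,\emptyset)$).

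There is, however, a genuine gap in your step (3). You factor $\chi_{\overline{\alpha_j}}=(\chi_{\overline{\alpha_j}}e^{\overline{\alpha_j}})\cdot e^{-\overline{\alpha_j}}$ and assert that $\chi_{\overline{\alpha_j}}e^{\overline{\alpha_j}}$ lies in $k[\widetilde{V}_k]$ with weight $(\overline{\alpha_j},\overline{\alpha_j})\in\widetilde{\CL}$, so that its valuation can be read off from non-membership in $\mathfrak{p}_{\Ir-\{i\}}$. But $(\overline{\alpha_j},\overline{\alpha_j})\in\widetilde{\CL}$ requires $\overline{\alpha_j}\in\iwtp$, i.e.\ that $\overline{\alpha_j}$ be \emph{dominant}, and this fails in general: for the split involution of $SL_3$ one has $\overline{\alpha_1}=2\alpha_1$ with $\langle\alpha_2^\vee,2\alpha_1\rangle<0$. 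In that case $\chi_{\overline{\alpha_j}}$ is only a rational function on $G_k/K_k$ (the regular $B_k$-eigenfunctions are exactly the $\chi_\mu$ with $\mu\in\iwtp$), so $\chi_{\overline{\alpha_j}}e^{\overline{\alpha_j}}\notin k[\widetilde{V}_k]$ and the implication ``not in $\mathfrak{p}_{\Ir-\{i\}}$ $\Rightarrow$ valuation $0$'' does not apply — a rational function outside the ideal could have negative valuation. The conclusion $\widetilde{v}_i(\chi_{\overline{\alpha_j}}e^{\overline{\alpha_j}})=0$ is still true, but to prove it you need the paper's device: choose $\mu\in\iwtp$ deep enough that $\mu+\overline{\alpha_j}\in\iwtp$ for all $j$, write $\chi_{\overline{\alpha_j}}e^0=(\chi_{\mu+\overline{\alpha_j}}e^{\mu+\overline{\alpha_j}})/(\chi_\mu e^{\mu+\overline{\alpha_j}})$, and test the two genuinely regular eigenfunctions against $\mathfrak{p}_{\Ir-\{i\}}$. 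With that repair your computation of $v_i(\chi_{\overline{\alpha_j}})$ goes through and the rest of your argument stands.
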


\begin{proof}
We show Part (1). By the construction, $G_k$-orbits on $P_k$ are of the form $\mathcal{O}_J$, for $J\subset\Ir$. Moreover one has $\mathcal{O}_{J'}\subset\overline{\mathcal{O}_{J''}}$ if and only if $J'\subset J''$. Hence part (1) immediately follows. 

We show Part (2). Let $\widetilde{D}$ be the closure of $D\times \overline{T}_k$ in $\widetilde{V}_k$. Then it is clear that $\widetilde{D}\cap\widetilde{V}_k^0$ is the preimage of $\overline{D}$ (in $P_k$) under the canonical map $\widetilde{V}_k^0\rightarrow P_k$. To prove (2), it suffices to show that $\widetilde{D}$ does not contain $\overline{\widetilde{\mathcal{O}}_{\emptyset}}$ in $\widetilde{V}_k$. The defining ideal $I(D)$ of $D$ in $k[G_k/K_k]$ is $B_k$-stable, so we can take $\chi_\mu\in I(D)^{U_k}$, for some $\mu\in\iwtp$. Therefore as a function on $\widetilde{V}_k$, the function $\chi_\mu e^\mu$ vanishes on $\widetilde{D}$. However by definition $\chi_\mu e^\mu$ does not belong to the ideal $\mathfrak{p}_{\emptyset}$, which is the defining ideal of $\overline{\mathcal{O}_{\emptyset}}$. This completes the proof of (2).

We show Part (3). For any $i\in\Ir$, let $\widetilde{v_i}$ be the valuation on $k(\widetilde{G}_k/\widetilde{K}_k)$ associate with the orbit closure $\overline{\widetilde{\mathcal{O}}_{\Ir-\{i\}}}$, which is defined by the prime ideal $\mathfrak{p}_{\Ir-\{i\}}$. Then for any $f\in k(G_k/K_k)$, it is clear that $v_i(f)=\widetilde{v_i}(fe^0)$. Take $\mu\in\iwtp$, such that $\mu+\overline{\alpha_j}\in\iwtp$, for any $j\in\Ir$. Then we have
\begin{equation}\label{eq:vic}
v_i(\chi_{\overline{\alpha_j}})=\widetilde{v_i}(\chi_{\overline{\alpha_j}}e^0)=\widetilde{v_i}(\chi_{\mu+\overline{\alpha_j}}e^{\mu+\overline{\alpha_j}})-\widetilde{v_i}(\chi_\mu e^{\mu+\overline{\alpha_j}}).
\end{equation}
Note that $\chi_{\mu+\overline{\alpha_j}}e^{\mu+\overline{\alpha_j}}$ belongs to $k[\widetilde{V}_k]-\mathfrak{p}_i$, so we have $\widetilde{v_i}(\chi_{\mu+\overline{\alpha_j}}e^{\mu+\overline{\alpha_j}})=0$. If $j\neq i$, we have $\chi_\mu e^{\mu+\overline{\alpha_j}}\in k[\widetilde{V}_k]-\mathfrak{p}_i$. And we have $\chi_\mu e^{\mu+\overline{\alpha_i}}\in\mathfrak{p}_i$. Therefore we have $\widetilde{v_i}(\chi_\mu e^{\mu+\overline{\alpha_j}})=0$ if $j\neq i$, and $\widetilde{v_i}(\chi_\mu e^{\mu+\overline{\alpha_j}})>0$ if $j=i$. Combined with \eqref{eq:vic}, this completes the proof of (3).
\end{proof}

\begin{remark}
    In terms of the spherical theory, Theorem \ref{thm:sp} shows that $P_k$ is the simple embedding associated to the colored cone $(\mathcal{V}(G_k/K_k),\emptyset)$. Therefore it coincides with the notion of canonical embeddings in the theory of spherical varieties (see \cite{Pe10}*{Definition 3.2.1}). 
\end{remark}

It follows from the definition that all the $G_k$-orbit closures on $P_k$ are parametrized by subsets of $\Ir$, and they are obtained by partial intersections of $G_k$-stable prime divisors.

\subsection{The integral model} We next define an integral model for the canonical embedding. For $\mu\in\iwt$, we define
\begin{equation*}
    \mathbf{O}(G/K)_{\preceq\mu}=\bigcup_{\lambda\in\iwtp,\;\lambda\preceq\mu}\mathbf{O}(G/K)_{\le\lambda}.
\end{equation*}
\begin{defi}\label{defi:P}
Let $\lambda\in X^+$ be such that $\langle \alpha_i^\vee,\lambda\rangle>0$, for all $i\in\I$. We define
\begin{equation}\label{eq:intZ}
  \mathbf{P}_\lambda(G/K)=\text{Proj }\bigoplus_{n\geq 0}\mathbf{O}(G/K)_{\preceq n\overline{\lambda}}e^{n\overline{\lambda}}
\end{equation}
to be the projective scheme over $\mathbb{Z}$. Here the multiplication on the graded ring is defined to be the obvious one. 
\end{defi}

Then for any algebraically closed field $k$ with characteristic not 2, by \eqref{eq:can} and Theorem \ref{thm:int}, the base change $Spec\; k\times_{Spec\;\mathbb{Z}}\mathbf{P}_\lambda(G/K)$ is isomorphic to the canonical embedding $P_k$. 

When $G_k$ is adjoint, the canonical embedding of $G_k/K_k$ coincides with the wonderful compactification constructed by De Concini--Procesi--Springer. Therefore by \eqref{eq:intZ} we obtain an integral model for the wonderful compactification $P_k$. One can similarly obtain the integral model for the closure of $G_k$-orbits in $P_k$.

\begin{remark}
It is clear the canonical embedding $P_k$ is independent of the choice of $\lambda$. We expect the scheme  $\mathbf{P}_\lambda(G/K)$ is also independent of the choice of $\lambda$ over $\BZ[\frac{1}{2}]$.
\end{remark}

\subsection{Local Structure Theorem and Smoothness}

In this section, we study the smoothness of the canonical embedding. We expect this is known to experts.

Recall that $G_k$ is semisimple and $P_k\supset G_k/K_k$ is the canonical embedding. Analogous to the notations in Section \ref{sec:wond}, let $\mathring{P}_k\subset P_k$ be the complementary of all the $B_k$-stable divisors which are not $G_k$-stable. Recall $\overline{T_k}=T_k/T_k^{\theta_k}$ is the torus contained in $G_k/K_k$. Let $N_k$ be the closure of $\overline{T_k}$ in $\mathring{P}_k$. Let $P_{\I_\bullet,k} \supset B_k$ be the parabolic subgroup associated with the subset of simple roots $\I_\bullet$, and let $U_{P_{\I_\bullet,k}}$ be the unipotent radical of $P_{\I_\bullet,k}$. We extend Theorem \ref{thm:smooad} to the canonical embeddings.

\begin{prop}\label{thm:smoo}
The action map
    \[
    U_{P_{\I_\bullet},k}\times N_k\longrightarrow \mathring{P_k}
    \]
    is an isomorphism of varieties. And $N_k\supset \overline{T_k}$ is an affine toric variety whose coordinate ring is the monoid algebra of $C=\{\mu\in\iwt\mid \mu\preceq 0\}\subset \iwt$. 
\end{prop}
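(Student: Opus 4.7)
The plan is to lift the problem to the enveloping variety $\widetilde{V}_k$, where the monoid $\widetilde{\CL}$ and its explicit $U_k$-invariant basis give tight combinatorial control, prove an analogous affine local structure statement there, and then descend through the GIT quotient $\widetilde{V}_k^0/\overline{T_k} \cong P_k$ of Theorem~\ref{thm:sp}.

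First I will identify the $\widetilde{B}_k$-stable, non-$\widetilde{G}_k$-stable divisors of $\widetilde{V}_k$. Since $\widetilde{G}_k/\widetilde{K}_k \cong (G_k/K_k) \times \overline{T_k}$ and $\overline{T_k}$ carries no nontrivial $T_k$-stable divisors, these are precisely the closures $\widetilde{D}_i$ of $D_i \times \overline{T_k}$, where $D_i$ ranges over the colors of $G_k/K_k$. Writing $\widetilde{W}$ for the complement of $\bigcup_i \widetilde{D}_i$ in $\widetilde{V}_k$, the preimage of $\mathring{P}_k$ under the quotient map $\widetilde{V}_k^0 \to P_k$ is $\widetilde{W} \cap \widetilde{V}_k^0$. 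On $\widetilde{W}$ I aim to show that the action map $U_{P_{\I_\bullet,k}} \times \widetilde{N} \to \widetilde{W}$ is an isomorphism, where $\widetilde{N}$ is the closure of $\overline{T_k} \times \overline{T_k}$ in $\widetilde{W}$. The classical local structure statement $B_k \cdot eK_k \cong U_{P_{\I_\bullet,k}} \times \overline{T_k}$ on the open orbit is the starting point; to extend it, I will use the description of $U_k$-invariants coming from Lemma~\ref{le:Uk} and the definition of $\widetilde{\CL}$:
\[
k[\widetilde{V}_k]^{U_k} = \bigoplus_{(\nu,\mu) \in \widetilde{\CL}} k \cdot \chi_\nu e^\mu, \qquad \widetilde{\CL} = \{(\nu,\mu) \in \iwtp \times \iwt : \nu \preceq \mu\},
\]
and show that localizing at the semi-invariants $\chi_{\overline{\omega_i}} e^0$ which cut out the colors yields the monoid algebra of $\widetilde{C} = \{(\nu,\mu) \in \iwt \times \iwt : \nu \preceq \mu\}$. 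I will then identify this localization with $k[\widetilde{N}]$; combined with the $U_{P_{\I_\bullet,k}}$-equivariance of the action map (and the fact that both sides have the same open dense orbit), this matches coordinate rings on both sides and forces the map to be an isomorphism.

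Finally, to descend to $P_k$, I note that $\overline{T_k}$ acting via the second $T_k$-factor fixes $U_{P_{\I_\bullet,k}}$ pointwise, so the isomorphism $U_{P_{\I_\bullet,k}} \times \widetilde{N} \cong \widetilde{W}$ is $\overline{T_k}$-equivariant. After intersecting with the stable locus $\widetilde{V}_k^0$ (equivalently, inverting the semi-invariant $\chi_{\overline{\lambda}} e^{\overline{\lambda}}$ from \eqref{eq:can}) and taking $\overline{T_k}$-invariants, the coordinate ring of $N_k = (\widetilde{N} \cap \widetilde{V}_k^0)/\overline{T_k}$ becomes the degree-zero piece of $k[\widetilde{C}]$ with respect to the second coordinate, which is the monoid algebra of $C = \{\nu \in \iwt : \nu \preceq 0\}$ as desired. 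The main obstacle is the isomorphism claim in the middle step: in the absence of smoothness one cannot imitate the argument of \cite{DS} for the wonderful case, but the good filtration from \cite{BS} (Theorem~\ref{thm:int}) together with the explicit basis above lets us match coordinate rings directly rather than appealing to fiber-smoothness.
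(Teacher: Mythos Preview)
Your lifting strategy---prove a local structure theorem on the affine enveloping variety $\widetilde{V}_k$, then descend through the GIT quotient---is a reasonable alternative to the paper's approach, but there is a genuine gap at the central step. Knowing $k[\widetilde{W}]^{U_k}\cong k[\widetilde{C}]$ together with the birationality of the action map $\alpha\colon U_{P_{\I_\bullet,k}}\times\widetilde{N}\to\widetilde{W}$ does not force $\alpha$ to be an isomorphism. The $G_k$-action on $\widetilde{W}$ does not respect the product $U_{P_{\I_\bullet,k}}\times\widetilde{N}$, so you cannot simply compare the two sides as $G_k$-modules with good filtration and matching $U_k$-invariants; what is actually needed is to exhibit regular functions on $\widetilde{W}$ that restrict to the coordinate functions of $U_{P_{\I_\bullet,k}}$, and your sketch gives no mechanism for this. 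The phrase ``the good filtration \dots\ lets us match coordinate rings directly'' is exactly where the work lies, and it is left as a slogan. (A secondary issue: the $\overline{\omega_i}$ need not lie in $\iwt$, and the colors of $G_k/K_k$ are not in general cut out by the functions $\chi_{\overline{\omega_i}}$, so the localization computation itself needs a more careful setup.)

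The paper handles this point by a different, more direct route. It identifies $k[\mathring{P}_k]$ with the localization $k[G_k/K_k]_{(\chi_{\overline{\lambda}})}$ and studies the restriction map $\delta$ into $k[U_{P_{\I_\bullet,k}}]\otimes k[\overline{T_k}]$. The containment ${\rm im}\,\delta\subset k[U_{P_{\I_\bullet,k}}]\otimes k[C]$ comes from a filtration estimate on the comultiplication (from \cite{Lu09}*{Lemma~1.8}); the reverse containment---in particular that $k[U_{P_{\I_\bullet,k}}]\otimes 1$ lies in the image---is obtained by passing to the adjoint quotient $G_k^{ad}$ and invoking De~Concini--Springer's Theorem~\ref{thm:smooad} as a black box. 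Your proposal has no analogue of either ingredient. If you want the lifting approach to go through, you will still need to produce the $U_{P_{\I_\bullet,k}}$-coordinates inside $k[\widetilde{W}]$, and the most accessible way to do so is again reduction to the adjoint case; alternatively one might appeal to a general local structure theorem for spherical embeddings in positive characteristic (cf.~\cite{Tange}), but then the argument is no longer self-contained.
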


\begin{proof}
    Take $\lambda\in X^+$, such that $\langle \alpha_i^\vee,\lambda\rangle>0$, for all $i\in\I$.  We have $P_k = \text{Proj }\bigoplus_{n\ge 0}k[G_k/K_k]_{\preceq n\overline{\lambda}} e^{n \overline{\lambda}}$. We write $ S_{\lambda} = \bigoplus_{n\ge 0}k[G_k/K_k]_{\preceq n\overline{\lambda}} e^{n \overline{\lambda}}$. By the proof of Theorem~\ref{thm:sp} (2), $\mathring{P}_k\subset{P_k}$ is the open affine subset defined by $\chi_{\overline{\lambda}}e^{\overline{\lambda}}\neq 0$.  Hence the coordinate ring $k[\mathring{P}_k]$ can be identified with the algebra 
    \[ (S_\lambda)_{(\chi_{\overline{\lambda}}e^{\overline{\lambda}})} = \left\{\frac{fe^{n\overline{\lambda}}}{\chi^n_{\overline{\lambda}}e^{n\overline{\lambda}}} \mid  f \in k[G_k/K_k]_{\preceq n\overline{\lambda}} \right\}.
    \]
   Note that 
    \begin{equation}\label{eq:sub} (S_\lambda)_{(\chi_{\overline{\lambda}}e^{\overline{\lambda}})} \cong k[G_k/K_k]_{(\chi_{\overline{\lambda}})}=\{f/\chi_{\overline{\lambda}}^n\mid f\in k[G_k/K_k]_{\preceq n\overline{\lambda}} \text{ for }n\in\mathbb{N}\}\subset k(G_k/K_k).
    \end{equation}
Hence it suffices to show the coaction map $\Delta:k[G_k/K_k]\rightarrow k[G_k]\otimes k[G_k/K_k]$ induces an injective map
\begin{equation}\label{core}
    \delta:k[G_k/K_k]_{(\chi_{\overline{\lambda}})}{\longrightarrow} k[U_{P_{\I_\bullet,k}}]\otimes k[\overline{T_k}] 
\end{equation}
and the image is $k[U_{P_{\I_\bullet,k}}]\otimes k[C]$. Here $\delta$ is the composition of $\Delta$ with the tensor product of maps $k[G_k]\rightarrow k[U_{P_{\I_\bullet,k}}]$ and $k[G_k/K_k]\rightarrow k[\overline{T_k}]$ given by restrictions. 

Recall the action map 
\begin{equation}\label{eq:act}
U_{P_{\I_\bullet},k}\times \overline{T_k}\rightarrow G_k/K_k
\end{equation}
is an isomorphism onto the open $B_k$-orbit of $G_k/K_k$ (see, for example, \cite{DS}*{Proposition~3.8}). It follows that \eqref{core} is injective.

We next determine the image. Take $f\in k[G_k/K_k]_{\le\mu}$. It follows from the proof of \cite{Lu09}*{Lemma 1.8} that $\Delta(f)\in \bigcup_{\lambda'+\lambda''=\mu}k[G_k]_{\le \lambda'}\otimes k[G_k/K_k]_{\le\lambda''}.$ On the other hand, for $f\in k[G_k]_{\le\mu}$ where $\mu\in X$, we have $f\mid_{T_k}=\sum_{\lambda}f_\lambda\chi_\lambda$ where $f_\lambda=f(\mathrm{1}_\lambda)$. Here we are viewing $f$ as a linear function on the modified quantum group as in \cite{Lu09}*{Theorem 4.11}. In particular we have $f_\lambda=0$ unless $\lambda\le\mu$. Therefore for $f\in k[G_k/K_k]_{\le\mu}$ we have $\delta(f)\in k[U_{P_{\I_\bullet},k}]\otimes k[\overline{T_k}]_{\le\mu}$, where $k[\overline{T_k}]_{\le\mu}\subset k[\overline{T_k}]$ is the subspace consisting of linear combinations of $\chi_{\mu'}$ with $\mu'\in\iwt$ and $\mu'\le\mu$. Then by the definition \eqref{eq:sub} and $\delta(\chi_{\overline{\lambda}})=1\otimes \chi _{\overline{\lambda}}$, we conclude that the image of $\delta$ lies in $k[U_{P_{\I_\bullet,k}}]\otimes k[C]$. We next show that the image is exactly $k[U_{P_{I_\bullet,k}}]\otimes k[C]$.

Let $G_{k}^{ad}=G_k/Z_k$, where $Z_k$ is the center of $G_k$. Then $\theta_k$ induces an involution on $G_{k}^{ad}$. Let $K_{k}^{ad}\subset G_{k}^{ad}$ denote the fixed-point subgroup. Let $\iwt^{ad}\subset\iwt$ be the spherical weight lattice associated with the symmetric space $G_k^{ad}/K_k^{ad}$. Take $n\in\mathbb{N}$ such that $n\overline{\lambda}\in \iwt^{ad}$. Similarly define $k[G_k^{ad}/K_k^{ad}]_{(n\chi_{\overline{\lambda}})}$ which is viewed as a subalgebra of $ k[G_k/K_k]_{(\chi_{\overline{\lambda}})}$. Moreover we have the commutative diagram
\begin{equation}
\begin{tikzcd}
    & k[G_k/K_k]_{(\chi_{\overline{\lambda}})} \arrow[r,"\delta"] &  k[U_{P_{\I_\bullet},k}]\otimes k[\overline{T_k}] \\ & k[G_k^{ad}/K_k^{ad}]_{(\chi_{n\overline{\lambda}})} \arrow[u,hook] \arrow[r,"\delta^{ad}"] & k[U_{P_{\I_\bullet},k}]\otimes k[\overline{T_k^{ad}}]\arrow[u,hook].  
\end{tikzcd}
\end{equation}
Here $\delta^{ad}$ is defined similarly as $\delta$. By Theorem \ref{thm:smooad}, the map $\delta^{ad}$ is an isomorphism into the image $k[U_{P,k}]\otimes k[C^{ad}]$, where $C^{ad}\subset \iwt^{ad}$ is defined similarly as $C$. In particular, the image of $\delta$ contains the subalgebra $k[U_{P_{\I_\bullet},k}]\subset k[U_{P_{\I_\bullet},k}]\otimes k[\overline{T_k}]$. On the other hand, take any $\mu\in C$. We can take $s\in\mathbb{Z}$ to be big enough such that $s\overline{\lambda}+\mu\in X^+$. Hence $\chi_\mu=\chi_{\mu+s\overline{\lambda}}/\chi_{\overline{\lambda}}^s$ belongs to $k[G_k/K_k]_{(\chi_{\overline{\lambda}})}$. Therefore $1\otimes \chi_\mu=\delta(\chi_\mu)$ belongs to the image of $\delta$. We complete the proof.
\end{proof}

Recall the spherical roots $\{\alpha_i'\mid i\in\Ir\}$ in Section \ref{subsec:spherical}.

\begin{cor}
    The canonical embedding $P_k$ is smooth if and only if $\{\alpha_i'\mid i\in\Ir\}$ is a basis of $\iwt$.
\end{cor}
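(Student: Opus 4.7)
The plan is to combine the local structure theorem (Proposition~\ref{thm:smoo}) with the standard smoothness criterion for affine toric varieties. First, I would reduce smoothness of $P_k$ to smoothness of $N_k$. The unique closed orbit $\mathcal{O}_\emptyset$ of Theorem~\ref{thm:sp}(1) lies in $\mathring{P_k}$ because, by Theorem~\ref{thm:sp}(2), no $B_k$-stable non-$G_k$-stable divisor contains it. Since every $G_k$-orbit closure on $P_k$ contains $\mathcal{O}_\emptyset$, a routine dimension argument shows every $G_k$-orbit meets the $G_k$-stable open set $G_k\cdot \mathring{P_k}$, so $G_k\cdot \mathring{P_k}=P_k$. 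Combined with the isomorphism $U_{P_{\I_\bullet},k}\times N_k \cong \mathring{P_k}$ of Proposition~\ref{thm:smoo} and the smoothness of the affine space $U_{P_{\I_\bullet},k}$, smoothness of $P_k$ is equivalent to smoothness of the affine toric variety $N_k=\mathrm{Spec}\,k[C]$.

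Next, I would make the cone structure of $C=\{\mu\in\iwt\mid \mu\preceq 0\}$ explicit. For $\mu\in\iwt$, the relation $\mu\preceq 0$ means $n\mu\le 0$ for some $n\in\BZ_{>0}$, which by Lemma~\ref{le:min} applied to the pair $n\mu\le 0$ forces $-\mu\in\sum_{i\in\Ir}\BQ_{\ge 0}\,\overline{\alpha_i}$; the converse is immediate. Hence $C=\iwt\cap\sigma$, where $\sigma=-\sum_{i\in\Ir}\BR_{\ge 0}\,\alpha_i'$. Since $G_k$ is semisimple, $\iwt\otimes_\BZ\BR$ coincides with the $(-1)$-eigenspace of $\theta_X$ on $E$, and a direct computation with the formulas of \S\ref{sec:iwt} identifies this eigenspace with $\breve{E}$. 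Therefore $|\Ir|$ equals the rank of $\iwt$, and $\{\alpha_i'\}_{i\in\Ir}$ form a $\BQ$-basis of $\iwt\otimes\BQ$. By the definition of spherical roots in \S\ref{subsec:spherical}, each $-\alpha_i'$ is the primitive lattice vector on its ray in $\iwt$.

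Finally, I would invoke the smoothness criterion for affine toric varieties: because $\sigma$ is a full-dimensional strongly convex rational polyhedral cone in $\iwt\otimes\BR$, the affine toric variety $N_k=\mathrm{Spec}\,k[\iwt\cap\sigma]$ is smooth if and only if its primitive ray generators $\{-\alpha_i'\}$ form a $\BZ$-basis of $\iwt$, equivalently iff $\{\alpha_i'\mid i\in\Ir\}$ is a $\BZ$-basis of $\iwt$. The main technical content is the identification $\iwt\otimes\BR=\breve{E}$ in the middle paragraph, which crucially uses the semisimplicity hypothesis; the remainder is a standard dictionary between affine toric varieties and lattice cones.
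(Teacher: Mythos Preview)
Your proof follows essentially the same route as the paper's: reduce smoothness of $P_k$ to that of $\mathring{P_k}$ (and hence of $N_k$) via the local structure theorem, identify $C$ as the lattice points in the simplicial cone on $\{-\alpha_i'\}$, and apply the standard toric smoothness criterion. One small slip: Theorem~\ref{thm:sp}(2) only yields $\mathcal{O}_\emptyset \not\subset \overline{D}$ for each $B_k$-stable non-$G_k$-stable divisor $D$, which does \emph{not} imply $\mathcal{O}_\emptyset \subset \mathring{P_k}$ as you claim; however, since $\mathcal{O}_\emptyset$ is irreducible and there are only finitely many such $D$, it does give $\mathcal{O}_\emptyset \cap \mathring{P_k} \neq \emptyset$, and this is all you need to conclude $G_k\cdot\mathring{P_k}=P_k$.
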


\begin{proof}
    Since $P_k$ has no colors, any $G_k$-orbit of $P_k$ intersects $\mathring{P_k}$. Hence $P_k$ is smooth if and only if $\mathring{P_k}$ is smooth. By Proposition \ref{thm:smoo}, $\mathring{P_k}$ is smooth if and only if the toric variety $N_k$ is smooth. By the definition of the spherical root system, $\{-\alpha_i'\mid i\in\Ir\}$ forms a minimal generators of the polyhedral cone $\mathbb{R}\otimes C$. Then the corollary follows from the smoothness criteria of toric varieties.
\end{proof}

\begin{remark}
    It is clear that when $G_k$ is of adjoint type, $P_k$ is smooth. There are other cases when $P_k$ is also smooth, for example, the symmetric space $SL_2/SO_2$.
\end{remark}

\end{document}